\definecolor{Red}{rgb}{0.9,0.1,0.3}
\newcommand{\ev}[1]{\mathbb{E}{#1}}
\newcommand{\e}[1]{\mathbb{E}}
\newcommand{\pr}[1]{\mathbb{P}\rbr{#1}}
\newcommand{\norm}[3]{\Vert #1 \Vert_{ #2 } ^{ #3 }}
\newcommand{\rbr}[1]{\left( #1 \right)}
\newcommand{\sbr}[1]{\left[ #1 \right]}
\newcommand{\cbr}[1]{\left\{ #1 \right\}}
\newcommand{\ddp}[2]{\left\langle #1, #2 \right\rangle}
\newcommand{\intr}{\int_{\mathbb{R}^d}}
\newcommand{\intc}[1]{\int_{0}^{#1}}
\newcommand{\Rd}{\mathbb{R}^d}
\newcommand{\R}{\mathbb{R}}
\newcommand{\T}[1]{\mathcal{T}_{#1}}
\definecolor{czerwony}{rgb}{1,0.3,0.3}
\definecolor{zielony}{rgb}{0.1,0.8,0.4}
\newcommand{\dd}[1]{\textnormal{d}#1}
\newcommand{\SP}{\mathcal{S}'(\Rd)}
\newcommand{\SD}{\mathcal{S}(\Rd)}
\newcommand{\lap}[1]{\ev{\exp\rbr{#1}}}
\newcommand{\ti}{\rightarrow +\infty}
\newcommand{\tv}{\tilde{v}_T} 
\newcommand{\eexp}[1]{\exp \left( #1 \right)}
\theoremstyle{plain} 
\newtheorem{thm}{Theorem}[section]
\newtheorem{lem}{Lemma}[section]
\newtheorem{corollary}{Corollary}[section]
\newtheorem{prop}{Proposition}[section]
\theoremstyle{remark}\newtheorem{rem}{Remark}[section]
\newtheorem*{acknowledgement*}{Acknowledgement}
\title{Occupation times of subcritical branching immigration systems with Markov motion, clt and deviation principles} 
\author{\textsc{Piotr Miłoś\footnote{supported by MNISW grant N N201 397537.}}} 
\begin{document}
	\maketitle

	\begin{abstract}
		In this paper we consider two related stochastic models. The first one is a branching system consisting of particles moving according to a Markov family in $\Rd$ and undergoing subcritical branching with a constant rate of $V>0$. New particles immigrate to the system according to a homogeneous space–time Poisson random field. The second model is the superprocess corresponding to the branching particle system. We study rescaled occupation time process and the process of its fluctuations under mild assumptions on the Markov family. In the general setting a functional central limit theorem as well as large and moderate deviation principles are proved. The subcriticality of the branching law determines the behaviour in large time scales and it ``overwhelms'' the properties of the particles’ motion. For this reason the results are the same for all dimensions and can be obtained for a wide class of Markov processes (both properties are unusual for systems with critical branching).\\
				
		MSC: primary 60F17; 60G20; secondary 60G15 \\
		Keywords: Functional central limit theorem; Occupation time fluctuations; Branching particles systems with immigration; Subcritical branching law
		
	\end{abstract}
	
	\section{Introduction} In this paper we study two closely related random models. The first one is a subcritical branching particle system (BPS) with immigration. It consists of particles evolving independently in $\Rd$ according to a time-homogeneous Markov family $(\eta_t,\mathbb{P}_x)_{t\geq0,x\in \Rd}$. The lifetime of a particle is distributed  exponentially with a parameter $V>0$. When dying the particle splits according to a binary branching law, determined by the generating function 
	\begin{equation}
		F(s) = qs^2 + (1-q),\qquad q<1/2. \label{eq:generating} 
	\end{equation}
	This branching law is \textit{subcritical} (i.e. the expected number of particles spawning from one is strictly less than $1$). Each of the new-born particles undertakes movement according to the Markov family $\eta$, independently of the others, branches, and so on. New particles \textit{immigrate} to the system according to a homogeneous Poisson random field in $\mathbb{R}_+\times \Rd$ (i.e. time and space) with the intensity measure $H \lambda_{d+1}$, $H>0$ (where $\lambda_{d+1}$ denotes the $(d+1)$-dimensional Lebesgue measure). Because of immigration the initial particle distribution has no effect on the system in the long term. For the sake of simplicity, we choose it to be null.\\
	The second model considered in the paper is the superprocess corresponding to the BPS. It can be obtained as a short life-time, high-density, small-particle limit of the BPS described above. This construction is standard and recalled in Section \ref{sec:approx}. The evolution of these models will be represented by empirical measure processes $(N^B_t)_{t \geq 0}$, $(N^S_t)_{t \geq 0}$ (for the BPS and the superprocess respectively); i.e. for a Borel set $A$, $N^B_t(A)$ ($N^S_t(A)$) denotes a random number of particles (random mass) in $A$ at time $t$. We will also use the shorthand $N$ when we speak about both models. We define the rescaled occupation time process $(Y_T(t))_{t\geq 0}$ by
	\begin{equation}
		Y_T(t) := \frac{1}{F_T}\intc{Tt} N_s \dd{s},\: t\geq 0,\label{eq:rescaled-occupation} 
	\end{equation}
	and its fluctuations $(X_T(t))_{t\geq 0}$ by
	\begin{equation}
		X_T(t) := \frac{1}{F_T} \intc{Tt} \left( N_s - \ev N_s \right) \dd{s}, \: t \geq 0. \label{def:occupation-process} 
	\end{equation}	
In both cases $F_T$ is a deterministic norming which may vary in different situations. \\
We will now discuss the results obtained in the paper. The behaviour of both models is very similar hence will be presented together. Informally speaking, when $F_T = T$ the following \emph{law of large numbers} holds - $Y_T(t) \rightarrow_T \mu t$, for a certain positive measure $\mu$. Our aim is to estimate the speed of this convergence. This will be done through the following: 
\begin{description}
	\item[Central limit theorems (CLT)] The objectives of this part are to find suitable $F_T$, such that $X_T$ converges in law as $T\rightarrow +\infty$ to a non-trivial limit and identify this limit. It is convenient to regard $X_T$ as a process with values in the space of tempered distributions $\SP$ and prove convergence in this space. In the paper we prove a functional central limit theorem for the superprocess in Theorem \ref{thm:clt2} (the result for the BPS is already known  \cite[Theorem 2.1]{Mios:2009oq}). The theorem is in a sense classical as $F_T = T^{1/2}$ and the limit is Gaussian, namely a Wiener process.  The temporal structure of the limit is simple - the increments are independent, which contrasts sharply with the spatial structure being an $\SP$-valued Gaussian random field with the law depending on the properties of the Markov family $\eta$. This result can be explained by the subcriticality of the branching law. Below we present a shortened version 
of a heuristic argument presented in \cite{Mios:2009oq}. It uses a particle picture so it refers directly only to the BPS, nevertheless by the approximation presented in Section \ref{sec:approx} it is also applicable to the superprocess. The life-span of a family descending from one particle is short (its tail decays exponentially). Therefore, a particle hardly ever visits the same site multiple times. If we consider two distant, disjoint time intervals, it is likely that distinct (independent) families contribute to the increases of the occupation time in them. This results in independent increments of the limit process. Consequently, under mild assumptions, the properties of the movement play a minor role in the temporal part of the limit. On the other hand, the life-span of a family is too short to ``smooth out the grains in the space'' which, in turn, gives rise to the complicated spatial structure. 
	\item[Large and moderate deviation principles (LDP/MDP)] They are standard ways of studying rare events (on an exponential scale) when a random object converges to a deterministic limit. Moderate deviations can be also regarded as a link between the central limit theorem and large deviations (see Remark \ref{rem:moderate-devs} ). \\ In the paper we prove version of large deviation principles for the rescaled occupation process $Y_T$ for the BPS and the superprocess. They are contained in Theorem \ref{thm:ldp2} and Theorem \ref{thm:ldp}. The rate functions in these cases are quite complicated. Roughly speaking they are the Legendre transforms of functions expressed in terms of equations related to the systems. In both cases the results are not complete as the upper bounds and lower bounds are potentially not optimal and the upper bounds are derived only for a subclass of the compact sets. The reasons for this are to some extent fundamental. Exponential tightness is not likely to hold in this case, therefore a ``strong'' large deviation principle is impossible - see Remark \ref{rem:exp-tight}. Moreover, the functional approach is technically demanding. In Theorem \ref{thm:ldp2-one-dim} and Theorem \ref{thm:ldp-one-dim} we also present less powerful versions for the one-dimensional distribution which can be formulated more elegantly.  \\
The above limitations are not relevant to moderate deviation principles presented in Theorem \ref{thm:ldp22} and Theorem \ref{thm:ldp23}. We were able to obtain so-called strong deviation principles in a functional setting (i.e. for random variables taking values in the space $\mathcal{C}([0,1], \R)$). The rate functions in both cases are ``quite explicit''. What is more, the theorems closely resemble the Schilder theorem, which, together with the central limit theorems, strongly suggests that the large space-time scale behaviour is similar to the one of the Wiener process - see Remark \ref{rem:Schilder}. 
\end{description}
The distinctive feature of all results presented in the paper is the fact that they were obtained for a large class of Markov processes $\eta$. This is uncommon for stochastic models of this kind; usually $\eta$ is a well-known process (e.g. Brownian motion, $\alpha$-stable process, L\'evy process), which makes the analysis more tractable and explicit. The subcriticality of branching law suppresses the influence of the properties of $\eta$ (as it was discussed for the CLT), which makes it possible to carry out the reasoning in our fairly general setting.\\
We investigated the speed of convergence in the law of large numbers for the occupation time process $Y_T$ using two complementary tools: central limit theorems and large deviations, which together provide the full picture on various scales. The central limit theorems and moderate deviation principles indicate very close relation to the Brownian motion on large time scales which is slightly undermined by the large deviation principles. This phenomenon stems from the fact that in the ``exponential  scale'' of  the large deviation principles the properties of the Markov family $\eta$ finally play a role. While the results on the moderate deviations, given in this paper, seem quite satisfactory, it remains unclear whether the large deviation principles could be refined or are already ultimate. We stress that the results were obtained in the functional setting. The paper is written as a self-contained reference hence we summarise the results obtained earlier and present one-dimensional versions.\\
 We will now present our results against the state-of-the-art in the field. Central limit theorems for similar models with \emph{critical branching} were studied intensively by Bojdecki et al. and Milos. We just mention \cite{Bojdecki:2007ad,Milos:aa}, in which the reader finds further references. These systems (and related ones) were also studied using large deviation principles; \cite{Lee:1995aa,Deuschel:1994aa,Schied:1997bj, Li:2008bs} with \cite{Li:2008bs} describing the most recent developments. We also refer to \cite{Birkner:2007aa} as an example of similar results for branching random walks. The results for critical branching systems are qualitatively different from the ones presented here. The dependence on the properties of the particle movement is much stronger as the notion of transience and recurrence (for the movement itself and families of particles) plays vital role to the form of the limit - see also Remark \ref{rem:comparison-with}.\\
Systems with subcritical branching were largely neglected until recent works \cite{Mios:2009oq,Hong:2005aa}. While the studies of critical branching models concentrates mostly on the systems with ``well-behaving'' processes governing the particles movement (usually Brownian motion or $\alpha$-stable processes), \cite{Mios:2009oq,Hong:2005aa} admit a large class of Markov processes. This paper extends and virtually completes their developments. Firstly, we converted the functional central limit theorem for branching particle systems, \cite[Theorem 2.1]{Mios:2009oq}, to superprocesses, Theorem \ref{thm:clt2}. Secondly, we extend the results of \cite{Hong:2005aa} where the authors showed a large and moderate deviation principle for one-dimensional distribution of the superprocess \cite[Theorem 4.1 Theorem 5.1]{Hong:2005aa}. In the paper we present their functional counterparts and establish analogous results for the BPS. As it was mentioned above the only open issue left is the possibility of refining the large deviation principles.\\
Not surprisingly the proof techniques bear resemblance to the ones in \cite{Mios:2009oq,Hong:2005aa}. However they had to be enhanced to handle new situations. Loosely speaking, the main technical difficulty was to combine the methods of \cite{Mios:2009oq} suitable for functional setting with the methods of \cite{Hong:2005aa} developed to deal with large and moderate deviation principles. This required some delicate estimations of solutions of partial differential equations. The proof of the exponential tightness, which was the technically most cumbersome part, required also estimations of the suprema of stochastic processes. The BPS and the superprocesses are similar models and the proofs in both cases are similar, though usually more difficult for BPS.

The paper is organised as follows. In the next section we present the notation used throughout the paper. Section 3 is devoted to the detailed description of the superprocess. In Section 4 the results are presented. Finally, Section 5 contains the proofs.

\section{Notation}
In the whole paper we will use superscripts $^B$, $^S$ to indicate the BPS and the superprocesses, respectively. We shall skip the superscripts when a quantity (equation) will apply to both models or when it is clear from the context which model we are dealing with. By $\mathcal{B}(E)$ we will denote the Borel sets on space $E$. By $BV(E)$ we will denote the set of Borel measures on $E$ of bounded total variation.\\
 $\SP$ is a space of tempered distributions i.e. a nuclear space dual to the Schwartz space of rapidly decreasing functions $\SD$. The duality will be denoted by $\ddp{\cdot}{\cdot}$. By $\SP_+\subset \SP$ we will denote the subspace of positive functions.\\
In the whole paper 
\begin{equation}
	Q := V(1-2q),\label{def:Q} 
\end{equation}
which intuitively denotes the ``intensity of dying''. Recall that $V$ is the intensity of branching and $2q$ is the expected number of particles spawning from one particle. Clearly, the subcriticality of the branching law implies $Q>0$.\\
By $(\T{t})_{t\geq0}$ and $A$ we will denote, respectively, the semigroup and the infinitesimal operator corresponding to the Markov family $(\eta_t,\mathbb{P}_x)_{t\geq0,x\in \Rd}$ presented in Introduction. Sometimes instead of writing $\ev{}_x f(\eta_t)$ we write $\ev{f(\eta^x_t)}$.\\
For brevity of notation we also denote the semigroup 
\begin{equation*}
	\T{t}^Q f(x) := e^{-Qt}\T{t} f(x),
\end{equation*}
and the potential operator corresponding to it 
\begin{equation}
	\mathcal{U}^Q f(x) := \intc{+\infty} \T{t}^Q f(x) \dd{t}. \label{eq:potentialOperator}
\end{equation}
Three kinds of convergence are used. The convergence of finite-dimensional distributions is denoted by $\rightarrow_{fdd}$. For a continuous, $\SP$-valued process $X=(X_t )_{t\geq0}$ and any $\tau > 0$ one can define an $\mathcal{S}'(\mathbb{R}^{d+1})$-valued random variable 
\begin{equation}
	\ddp{\tilde{X}^\tau}{\Phi} := \intc{\tau}\ddp{X_t}{\Phi(\cdot,t)}\dd{t}. \label{eq:space-time-method} 
\end{equation}
If for any $\tau > 0$ $\tilde{X}_n \rightarrow \tilde{X}$ in distribution, we say that the convergence in the space-time sense holds and denote this fact by $\rightarrow_i$. Finally, we consider the functional weak convergence denoted by $X_n\rightarrow_c X$. It holds if for any $\tau > 0$ processes $X_n = (X_n (t))_{t\in[0,\tau]}$ converge to $X = (X(t))_{t\in[0,\tau ]}$ weakly in $\mathcal{C}([0, \tau ], \SP)$ (in the sequel without loss of generality we assume $\tau = 1$ and skip the superscript). It is known that $\rightarrow_i$ and $\rightarrow_{fdd}$ do not imply each other, but either of them together with tightness implies $\rightarrow_c$. Conversely, $\rightarrow_c$ implies both $\rightarrow_i$ , $\rightarrow_{fdd}$.\\
For a measure $\nu\in BV([0,1])$ we write
\begin{equation}
	\chi_\nu(s) := \nu((s,1]), \quad \chi_{\nu,T}(t) := \chi_\nu (t/T). \label{def:measure-chi} 
\end{equation}
We will skip the subscript $\nu$ when the measure is obvious from the context. By $H^1\subset \mathcal{C}([0,1],\mathbb{R})$ we denote the space of functions $f$ for which there exists $f'$ such that $f(t) = \intc{t}f'(s)\dd{s}$ and $f'$  is square integrable. We denote
\[
	\norm{f}{H^1}{} := \norm{f'}{L^2}{}.
\]
We also define
\begin{equation}
	\mathcal{C}_{a,b} := \cbr{f \in \mathcal{C}([0,1], \mathbb{R}): f \text{ is differentiable and }  \forall_{x\in[0,1]} f'(x) \in (a,b)}. \label{eq:space-c} 
\end{equation}
For a function $f\in \mathcal{C}([0,1], \R)$ we denote its modulus of continuity
\begin{equation}
	w(f,\delta) := \sup_{\substack{s,t \in [0,1]\\|s-t|<\delta}} |f(s)-f(t)|. \label{eq:modulus} 
\end{equation}
By $c,c_1,\ldots,C,C_1,\ldots$ we will denote generic constants.

\section{Subcritical superprocess with immigration} \label{sec:approx} In this section we recall the construction of the superprocess. By $(N^n_t)_{t\geq0}$ we denote the $n$-th approximation of superprocess i.e. the branching particle system in which particles live for exponential time with parameter $V_n = 2 n V q$. The branching law is given by  a generating function 
\begin{equation}
	F_n(s) = q_n s^2 + (1-q_n), \quad q_n:= \frac{ 2nq + 2q-1}{4nq}. \label{eq:generating2} 
\end{equation}
The system starts from the null measure (the starting measure does not affect the results, hence this assumption can be easily dropped) and the immigration is given by a space-time homogenous Poisson random field with intensity $nH (\lambda_{d+1})$. We also assume that each of the particles carries mass $1/n$. The particular choice of $V_n,q_n$ is to some extent arbitral and was made to keep the intensity of dying fixed at level $Q$ and for the sake of convenience (e.g. to have the same constants in forthcoming equations \eqref{eq:v-integral}, \eqref{eq:w-integral-super}). It does not affect the generality of the results as one can easily reformulate theorems for any other choice.\\
By $N$ we denote a measure-valued homogenous Markov process with the following Laplace transform 
\begin{equation*}
	\ev \exp\rbr{-\ddp{N_t}{\varphi}} = \exp\cbr{-H\intc{t} \ddp{\lambda}{H_s \varphi} \dd{s}},
\end{equation*}
where $\varphi:\Rd\mapsto \R_+$ is measurable and $H_s$ is a semigroup given by equation 
\begin{equation*}
	H_t \varphi(x) = \T{t}^Q \varphi(x) - Vq \intc{t} \T{t-s}^Q \sbr{(H_s \varphi)^2(\cdot)}(x) \dd{s}.
\end{equation*}
The process $N$ will be called the { superprocess related  to the BPS}, which is justified by
\begin{prop} \label{prop:superprocess-convergence} 
	The following convergence holds 
	\begin{equation*}
		N^n \rightarrow_c N.
	\end{equation*}
\end{prop}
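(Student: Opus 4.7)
The statement is the standard high-density scaling limit: a binary branching particle system with small particles, accelerated branching and intensified immigration converges to the Dawson--Watanabe-type superprocess with immigration defined in Section \ref{sec:approx}. The plan is to combine convergence of Laplace functionals (giving $\rightarrow_{fdd}$) with tightness in $\mathcal{C}([0,1],\SP)$.

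For finite-dimensional convergence, the starting point is the standard branching/Poisson-immigration identity: for $\varphi\in\SD_+$,
\[
	\ev\exp\rbr{-\ddp{N^n_t}{\varphi}} = \exp\cbr{-\intc{t}\intr nH\rbr{1-u^n_s(x)}\dx\ds},
\]
where $u^n_s(x)$ is the Laplace transform of the subpopulation started from a single mass-$1/n$ particle at $x$. Conditioning on the first branching event yields the mild equation
\[
	u^n_t(x) = e^{-V_n t}\T{t}\sbr{e^{-\varphi/n}}(x) + \intc{t}V_n e^{-V_n s}\T{s}\sbr{F_n\rbr{u^n_{t-s}}}(x)\ds.
\]
Setting $w^n_t := n(1-u^n_t)$ and expanding $F_n(1-w/n) = 1 - 2q_n w/n + q_n w^2/n^2$, this rewrites as a mild equation for $w^n$ whose linear drift coefficient is $V_n(1-2q_n) = Q$ (by the calibrated choice of $V_n, q_n$) and whose quadratic coefficient $V_n q_n/n$ tends to $Vq$, while the initial datum $n(1-e^{-\varphi/n})\to \varphi$. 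Formally $w^n_t \to H_t\varphi$, and the rigorous statement is obtained by Gronwall applied to $\sup_{x}|w^n_t-H_t\varphi|$ on $[0,1]$, exploiting the contraction $\norm{\T{t}^Q f}{\infty}{}\leq e^{-Qt}\norm{f}{\infty}{}$ and the a priori bound $0\leq H_t\varphi,w^n_t\leq \norm{\varphi}{\infty}{}$. Substituting in the outer exponential recovers the Laplace transform of $N_t$, and the Markov property of both systems promotes this to joint convergence at arbitrary $t_1<\ldots<t_k$.

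For tightness in $\mathcal{C}([0,1],\SP)$, Mitoma's theorem reduces the problem to tightness of the real-valued processes $t\mapsto\ddp{N^n_t}{\varphi}$ for each $\varphi\in\SD$. Differentiating the Laplace functional (or branching-tree combinatorics) yields first- and second-moment formulas of the form $\ev\ddp{N^n_t}{\varphi} = H\intc{t}\ddp{\lambda}{\T{s}^Q\varphi}\ds$ and $\ev|\ddp{N^n_t-N^n_s}{\varphi}|^2\leq C(\varphi)(t-s)$, uniformly in $n$; here subcriticality $Q>0$ is what keeps the constants bounded over $[0,1]$ in spite of the immigration. Kolmogorov's criterion, together with $N^n_0=0$, gives tightness and hence $N^n\convc N$.

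The main obstacle is the uniform-in-$(t,x)$ convergence $w^n\to H_\cdot\varphi$: one must propagate a stability estimate through a quadratic nonlinearity while absorbing the drift error $V_n q_n/n - Vq = O(1/n)$ and the initial-condition error $n(1-e^{-\varphi/n})-\varphi = O(\norm{\varphi}{\infty}{2}/n)$. Because the a priori bound $\norm{H_t\varphi}{\infty}{}\leq \norm{\varphi}{\infty}{}$ is deterministic and the semigroup $\T{t}^Q$ decays at the rate $e^{-Qt}$, Gronwall's lemma closes the argument on $[0,1]$ without any further structural hypothesis on the motion $\eta$.
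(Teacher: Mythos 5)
The paper itself gives no written proof of Proposition \ref{prop:superprocess-convergence}; it delegates to the standard construction (Etheridge, Section 1.4), and your log-Laplace part is exactly that route: the renewal equation obtained by conditioning on the first branching event, the substitution $w^n=n(1-u^n)$, the calibration $V_n(1-2q_n)=Q$ and $V_nq_n/n\rightarrow Vq$, and a Gronwall argument using $0\leq w^n_t\leq e^{-Qt}\T{t}\varphi\leq\norm{\varphi}{\infty}{}$ are all correct and match the cited scheme. One point needs patching there: the immigration exponent is $H\intc{t}\intr w^n_s(x)\dx\ds$, an integral over all of $\Rd$, so convergence of $\sup_x|w^n_s-H_s\varphi|$ alone does not let you pass to the limit; either run the same stability estimate in $L^1$ as well (the semigroup $\T{t}$ is also an $L^1$ contraction), or use dominated convergence with the $n$-independent majorant $w^n_s, H_s\varphi\leq e^{-Qs}\T{s}\varphi$, which is integrable in $(s,x)$. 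The passage from one-dimensional to finite-dimensional distributions via the Markov property is glossed but standard (iterate the log-Laplace equation with terminal data), and I would accept it at this level of detail.

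The genuine gap is the tightness step. A second-moment increment bound $\ev{|\ddp{N^n_t-N^n_s}{\varphi}|^2}\leq C(t-s)$ together with ``Kolmogorov's criterion'' does not yield tightness in $\mathcal{C}([0,1],\SP)$: the criterion requires $\ev{|X_t-X_s|^\beta}\leq C|t-s|^{1+\gamma}$ with $\gamma>0$, and an increment variance that is exactly linear in $t-s$ is the borderline case satisfied, e.g., by a compensated Poisson process, which is not $\mathcal{C}$-tight. Moreover the prelimit processes are particle systems, so $t\mapsto\ddp{N^n_t}{\varphi}$ has jumps (of size of order $1/n$ at branching, death and immigration times, plus possible jumps of the motion $\eta$) and does not even take values in $\mathcal{C}([0,1],\SP)$. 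Two standard repairs are available: (i) strengthen the moment bound to fourth order, $\ev{|\ddp{N^n_t-N^n_s}{\varphi}|^4}\leq C(t-s)^2$ uniformly in $n$, which is exactly the criterion \eqref{ineq:tightness} the paper uses in its own tightness proofs and which you can extract from the Laplace functional you already computed by differentiating four times; or (ii) work in the Skorokhod space $D([0,1],\SP)$, use the $D$-version of Mitoma's theorem together with Aldous's criterion — for which your order-$\delta$ second-moment bound evaluated at stopping times does suffice — and then upgrade, since the limit $N$ is continuous and the maximal jump of $\ddp{N^n}{\varphi}$ vanishes as $n\rightarrow\infty$. As written, however, the tightness argument fails.
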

The proof is standard. For instance, one can follow the lines of \cite[Section 1.4]{Etheridge:2000fe}). 

\section{Results} \label{sec:res}
Firstly we present the restrictions imposed on the Markov family $(\eta_t,\mathbb{P}_x)_{t\geq0,x\in \Rd}$. They are mild and easy to check in concrete cases. Let us denote the quadratic forms
\begin{equation}
	T_1(\varphi) := \norm{\mathcal{U}^Q\rbr{\varphi \: \mathcal{U}^Q\varphi}}{1}{},\quad \varphi \in \SD, \label{eq:T1} 
\end{equation}
\begin{equation}
	T_2(\varphi) := \norm{\mathcal{U}^Q \rbr{ (\mathcal{U}^Q\varphi)^2 }}{1}{}, \quad \varphi \in \SD, \label{eq:T2}  
\end{equation}
also, slightly abusing notation, we will also use $T_1$ and $T_2$ to denote the corresponding {bilinear forms}.
\subsection{Assumptions}
\begin{enumerate}
	\item[(A1)] The Markov family $(\eta_t,\mathbb{P}_x)_{t\geq0,x\in \Rd}$ is {almost uniformly stochastically continuous} i.e. 
	\begin{equation*}
		\forall_{r>0}\forall_{\epsilon>0} \liminf_{s\rightarrow 0} \inf_{ |x|\leq r } \mathbb{P}_x(\eta_s,B(x,\epsilon)) = 1, 
	\end{equation*}
	where $B(x,\epsilon)$ denotes the ball of radius $\epsilon$ with the centre in $x$. 
	\item[(A2)] Let $D_A$ denotes the domain of the infinitesimal operator $A$. We have 
	\begin{equation*}
		\SD \subset D_A. 
	\end{equation*}
	\item[(A3)] For any $\varphi\in \SD$  the semigroup $(\T{t}^\varphi)_{t\geq 0}$ given by 
	\begin{equation*}
		\T{t}^\varphi f(x) := \ev{}_x \exp\cbr{\intc{t} \varphi(\eta_s) \dd{s}} f(\eta_t), 
	\end{equation*}
	is a Feller semigroup. 
	\item[(A4)] For any $\varphi\in \SD$ 
	\begin{equation}
		T_1(\varphi) <+\infty, \quad T_2(\varphi) <+\infty. \label{ass:1} 
	\end{equation}
	\item[(A5)] For any $\varphi\in \SD$ 
	\begin{equation}
		t^{3/2}  \norm{\T{t}^Q \varphi}{1}{} \rightarrow 0. \label{ass:3} 
	\end{equation}
	\item[(A6)] For any $h\in \mathcal{L}^2$ there exist $C>0$ and $Q'>0$ such that
	\begin{equation*}
		\norm{\T{t}^Q h}{2}{} \leq  C e^{-Q't} \norm{h}{2}{}, \: \forall_{t\geq 0}. \label{}
	\end{equation*}
	\item[(A7)] For any $h\in \mathcal{L}^1$ there exist $C>0$ and $Q'>0$ such that
	\begin{equation*}
		\norm{\T{t}^Q h}{1}{} \leq C e^{-Q't}\norm{h}{1}{}, \: \forall_{t\geq 0}. \label{}
	\end{equation*}
	\item[(A8)] For any $\varphi \in \SD_+$ there exist $\epsilon>0,c>0$ such that  
	\begin{equation*}
		\norm{\T{t}^Q \varphi}{1}{} \leq c \rbr{1\wedge t^{-2-\epsilon}}. 
	\end{equation*}
	\item[(A9)] For any $\varphi \in \SD_+$ there exist $\epsilon>0,c>0$ such that  and for all $h,l$ 
	\begin{equation*}
		\norm{\T{t}^Q\sbr{\T{h}^Q\varphi(\cdot) \T{l}^Q\varphi(\cdot) }}{1}{} \leq c\rbr{1\wedge t^{-2-\epsilon}}. 
	\end{equation*}
\end{enumerate}

%
\begin{rem}
	The assumptions above are used in various configurations and are not independent. E.g. (A7) implies (A5), (A8) and (A9).
\end{rem}

\begin{rem}
	The conditions above can be easily checked for concrete processes. For example they hold for any L\'evy process. Consider also the Ornstein-Uhlembeck process $\cbr{\eta_t}_{t\geq 0}$ given by the stochastic equation
	\[
		d\eta_t = - \theta \eta_t \dd{t} + \sigma dW_t,\,
	\]
	where $\theta>0,\sigma >0$ and $W$ is the Wiener process. $X$ fulfils the above assumptions if 
	\[
		\theta < Q.
	\]
	This condition has a clear interpretation. $\theta$ determines the speed at which particles arrive in proximity of  $0$ (this interpretation would be totally strict if $\sigma=0$). The intensity of dying i.e. $Q$ have to be large enough to prevent clumping particles near $0$.
\end{rem}

\subsection{Branching process} In this subsection $N$ denotes the BPS described in Introduction. The processes \eqref{eq:rescaled-occupation} and \eqref{def:occupation-process} are defined with this $N$. Firstly we recall the central limit theorem \cite[Theorem 2.1]{Mios:2009oq} 
\begin{thm}
	\label{thm:clt1} Let $X_T$ be the rescaled occupation time fluctuations process given by (\ref{def:occupation-process}). Assume that $F_T = T^{1/2}$ and assumptions (A1)-(A5) are fulfilled. Then 
	\begin{equation*}
		X_T \rightarrow_{i} X,\quad \text{ and }\quad X_T \rightarrow_{fdd} X, 
	\end{equation*}
	where $X$ is a generalised $\SP$-valued Wiener process with covariance functional 
	\begin{equation*}
		Cov(\ddp{X_t}{\varphi_1}, \ddp{X_s}{\varphi_2}) = H\rbr{s\wedge t} \rbr{T_1(\varphi_1, \varphi_2) + Vq T_2(\varphi_1, \varphi_2)}, \quad \varphi_1,\varphi_2 \in \SD, 
	\end{equation*}
	if, additionally, assumptions (A8)-(A9) are fulfilled then 
	\begin{equation*}
		X_T \rightarrow_{c} X. 
	\end{equation*}
\end{thm}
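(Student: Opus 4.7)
The plan is to compute the Laplace functional of $X_T$ explicitly, exploiting the branching structure, and identify its asymptotic limit. For a single ancestor starting at $x$, let
\[
    v_T(s,x;\varphi) := 1 - \ev{}_x \exp\rbr{-T^{-1/2}\intc{Ts} \varphi(\eta_u)\dd{u} + \text{branching contributions}}.
\]
Standard arguments for BPS with binary branching \eqref{eq:generating} yield an integral equation of the form $v_T = \T{Ts}^Q \bigl(T^{-1/2}\varphi\bigr) - Vq\,\T{\cdot}^Q(v_T^2)$ on the semigroup $\T{\cdot}^Q$. Poisson immigration with intensity $H$ then produces
\[
    \ev \exp\bigl(-\ddp{X_T(t)}{\varphi}\bigr) = \exp\cbr{-H\intc{Tt}\intr v_T(s,x)\dx\ds + H\intc{Tt}\intr \T{s}^Q(T^{-1/2}\varphi)(x)\dx\ds},
\]
where the second term is the centering coming from $\ev N_s$.

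Next I would perform an asymptotic expansion in $T^{-1/2}$. Iterating the integral equation and using that the immigration kills the linear-in-$\varphi$ contribution (via centering), the leading surviving term is of order one and has two sources: the iterated kernel $\mathcal{U}^Q(\varphi\,\mathcal{U}^Q\varphi)$, giving exactly $H\,T_1(\varphi)$ after integration over $\Rd$; and the quadratic branching term $Vq(\mathcal{U}^Q\varphi)^2$, giving $H\,VqT_2(\varphi)$. Assumption (A4) ensures these are finite, (A5) controls the error terms coming from truncating the iteration (the $t^{3/2}$ decay exactly matches the $T^{1/2}$ normalisation), and (A1)--(A3) provide the regularity needed so that $\T{t}^Q$ maps $\SD$ into a class where the identifications make sense and the integral equation is well posed. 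For two times $s<t$, the same computation applied to $\alpha\varphi_1 \mathbf{1}_{[0,s]} + \beta\varphi_2 \mathbf{1}_{[0,t]}$ produces the factor $s\wedge t$ in the covariance, since contributions from disjoint time intervals factor asymptotically (this is the heuristic ``independent increments'' discussed in the introduction, made rigorous by subcriticality). This yields $X_T \convfdd X$ and, by integrating against space-time test functions $\Phi\in\mathcal{S}(\mathbb{R}^{d+1})$, also $X_T \convi X$.

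For the functional convergence $X_T \convc X$ I would invoke Mitoma's theorem, which reduces tightness in $\mathcal{C}([0,1],\SP)$ to tightness of the real-valued coordinate processes $\ddp{X_T(\cdot)}{\varphi}$ for each $\varphi\in\SD$. I would verify Kolmogorov's criterion by proving a bound of the form
\[
    \ev \bigl|\ddp{X_T(t)-X_T(s)}{\varphi}\bigr|^4 \leq C(t-s)^2,
\]
uniformly in $T$. Expanding the fourth moment via the branching structure produces terms in which the semigroup $\T{\cdot}^Q$ is applied either to $\varphi$ itself or to products $\T{h}^Q\varphi \cdot \T{l}^Q\varphi$; assumptions (A8) and (A9) give the uniform bound $c(1\wedge t^{-2-\epsilon})$ which, after integrating over the $Tt$-scale time variables, delivers the required $(t-s)^2$ factor.

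The main obstacle will be the fourth moment estimate. The algebraic expansion of the Laplace functional and the identification of the limit covariance is by now routine once the PDE for $v_T$ is in hand, but controlling the cross terms in $\ev|\ddp{X_T(t)-X_T(s)}{\varphi}|^4$ — which count the ways in which four ``virtual particles'' from possibly overlapping subfamilies contribute to the occupation time — requires a careful combinatorial decomposition so that each term is seen to inherit the integrable decay supplied by (A8)--(A9). Once this is organised, everything is uniform in $T$ and Mitoma's criterion closes the argument.
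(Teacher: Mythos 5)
Your overall architecture --- one-particle log-Laplace equation, Poisson immigration turning it into an exponential formula, centering removing the linear term, identification of the limit through the potential operator $\mathcal{U}^Q$ with (A4) for finiteness and (A5) for the error terms, then the space-time method plus Mitoma's theorem with a uniform fourth-moment bound from (A8)--(A9) --- is exactly the route the paper takes (this theorem is recalled from \cite{Mios:2009oq}, and the same scheme is reproduced in Section \ref{sec:scheme} for the superprocess analogue). There is, however, one substantive slip. The integral equation you display, $v_T = \T{\cdot}^Q\bigl(T^{-1/2}\varphi\bigr) - Vq\,\T{\cdot}^Q\bigl(v_T^2\bigr)$, is the \emph{superprocess} equation (cf.\ \eqref{eq:w-integral-super}, \eqref{eq:integral-super-simple}), not the BPS one. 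For the particle system the correct equation \eqref{eq:v-integral} carries the additional cross term $\Psi\,(1+v_T)$, i.e.\ a contribution $\T{\cdot}^Q\sbr{\Psi\, v_T}$, and it is precisely this term that generates $T_1(\varphi)=\norm{\mathcal{U}^Q\rbr{\varphi\,\mathcal{U}^Q\varphi}}{1}{}$ in the limit. Iterating the equation as you wrote it produces only quadratic terms of type $\mathcal{U}^Q\rbr{(\mathcal{U}^Q\varphi)^2}$, hence only the $VqT_2$ part of the covariance --- that is, the statement of Theorem \ref{thm:clt2}, not Theorem \ref{thm:clt1}. Since your verbal expansion nevertheless lists both sources correctly, the repair is just to restore the cross term, but as written the displayed equation and the claimed covariance are inconsistent, and this is the heart of the identification step.

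Two smaller points. The centering should be $\frac{1}{F_T}\intc{Tt}\ev{\ddp{N_s}{\varphi}}\ds = H\intc{Tt}\intc{s}\intr \T{u}^Q\bigl(T^{-1/2}\varphi\bigr)(x)\,\dx\,\du\,\ds$, a double time integral; in the paper this is encoded by subtracting $\tilde v_T$ of \eqref{sol:tv} and working with $u_T$ of \eqref{eq:def-u}. Also, since $\ddp{\tilde{X}_T}{\Phi}$ is not of one sign, the use of the Laplace transform needs the justification (special Gaussian form of the limit) indicated in Section \ref{sec:scheme}. Your tightness plan is the paper's: Mitoma reduces matters to the real-valued coordinates, and the bound $\ev{\rbr{\ddp{X_T(t)-X_T(s)}{\varphi}}^4}\leq C(t-s)^2$ is obtained there by differentiating the Laplace transform of the space-time variable four times at $\Phi_n=\varphi\otimes\psi_n$ with $\psi_n\to\delta_t-\delta_s$, which is what organises the combinatorics of overlapping families that you flag as the main obstacle, with (A8)--(A9) supplying the integrable decay.
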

\begin{rem} \label{rem:comparison-with}
	The limit is an $\SP$-valued Wiener process with a simple time structure and a complicated temporal one (for any $d$). This  resembles the result for the system with critical branching in large dimensions (e.g. \cite{Bojdecki:2006aa}, \cite{Milos:2008aa}). The main reason of this is a short (exponentially-tailed) life-span of a family descending from one particle. It leads to independent increments in the limit (as there are no “related” particles in the long term). On the other hand the movement is “not strong enough” to smooth out the spatial structure.
\end{rem}
\noindent Let us now recall \eqref{def:Q} and denote
\begin{equation}
	Q_0^B = V(1-2\sqrt{q(1-q)}) . \label{eq:q0b}
\end{equation}
A LDP contained in Theorem \ref{thm:ldp2} is our next objective. Before that we need
\begin{lem}
	\label{lem:convergence-v2} Let $\varphi \in \SD_+$ and  $\theta < Q_0^B/\norm{\varphi}{\infty}{}$. Then the equation
	\begin{equation*}
		v_{\varphi}(x,t,\theta) = \intc{t} \T{t-s}^Q \theta\varphi(x) + \intc{t} \T{t-s}^Q \rbr{\theta\varphi(\cdot,s) v_{\varphi}(\cdot,s,\theta)+ Vq v_{\varphi}^2(\cdot,s,\theta)}(x) \dd{s},
	\end{equation*}
	has a unique solution and the limit below is finite
	\begin{equation}
		v_\varphi(x, \theta) := \lim_{t\ti} v_{\varphi} (x,t,\theta). \label{eq:vvv2} 
	\end{equation}
\end{lem}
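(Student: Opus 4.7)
The plan is to substitute $\psi := \theta\varphi$, so the hypothesis becomes $\|\psi\|_\infty < Q_0^B$, set up a Picard iteration starting from $v_0 \equiv 0$, and control the iterates uniformly in $t$ by a finite constant $u^{\ast}$ whose existence is guaranteed precisely by that hypothesis. Once the iteration is shown to converge to the unique bounded nonnegative solution, a comparison argument will give monotonicity of $v_\varphi(x,t,\theta)$ in $t$, and the uniform bound will deliver a finite limit as $t \ti$.

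For existence on a fixed $[0,T]$, define iteratively
\[
v_{n+1}(x,t) := \intc{t}\T{t-s}^Q\psi(x)\ds + \intc{t}\T{t-s}^Q\bigl(\psi v_n(\cdot,s) + Vq v_n^2(\cdot,s)\bigr)(x)\ds.
\]
Because $\T{t-s}^Q$ has a nonnegative kernel and $u \mapsto \psi u + Vq u^2$ is nondecreasing on $[0,\infty)$, the sequence $(v_n)$ is pointwise nondecreasing. Using $\|\T{t-s}^Q f\|_\infty \leq e^{-Q(t-s)}\|f\|_\infty$ together with the induction hypothesis $v_n \leq u^{\ast}$, where $u^{\ast}$ is the smaller positive root of
\[
Vq\,u^2 + (\|\psi\|_\infty - Q)u + \|\psi\|_\infty = 0,
\]
one closes the induction via $v_{n+1}(x,t) \leq (\|\psi\|_\infty + \|\psi\|_\infty u^{\ast} + Vq(u^{\ast})^2)/Q = u^{\ast}$, the last equality being the defining relation of $u^{\ast}$.

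The fact that the root $u^{\ast}$ exists in $(0,\infty)$ is exactly where the hypothesis enters. The discriminant of the quadratic equals $(\|\psi\|_\infty - Q)^2 - 4Vq\|\psi\|_\infty$, which simplifies via the identity $Q + 2Vq = V$ (immediate from $Q = V(1-2q)$) to $\|\psi\|_\infty^2 - 2V\|\psi\|_\infty + Q^2$; its roots are $V \pm 2V\sqrt{q(1-q)}$, so the discriminant is positive precisely when $\|\psi\|_\infty < V(1-2\sqrt{q(1-q)}) = Q_0^B$, which is our hypothesis. Vieta's formulas then force both roots of the original quadratic to be positive (note $Q_0^B \leq Q$ since $\sqrt{q(1-q)} \geq q$ for $q \leq 1/2$), so $u^{\ast} \in (0, \infty)$. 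Monotone convergence produces a pointwise limit $v_\varphi(\cdot,t,\theta) \leq u^{\ast}$ solving the integral equation, and uniqueness among bounded nonnegative solutions follows from Gronwall applied to the sup-norm estimate $\|v_\varphi - \tilde v\|_\infty(t) \leq (\|\psi\|_\infty + 2Vq u^{\ast}) \intc{t}\|v_\varphi - \tilde v\|_\infty(s)\ds$.

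For the finite limit, take $h > 0$ and set $w(x,t) := v_\varphi(x,t+h,\theta)$. Splitting the defining integral of $w$ at $h$ and changing variables shows that $w$ solves the same integral equation as $v_\varphi$ augmented by the nonnegative correction $\intc{h}\T{t+h-s}^Q[\psi + \psi v_\varphi + Vq v_\varphi^2](\cdot,s)(x)\ds$, so $w$ is a super-solution. A Gronwall-type comparison argument then yields $w \geq v_\varphi$, i.e., $t \mapsto v_\varphi(x,t,\theta)$ is nondecreasing. Combined with the uniform bound $v_\varphi \leq u^{\ast}$ this gives a finite monotone limit $v_\varphi(x,\theta)$ as $t \ti$. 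The main obstacle is identifying the correct invariant bound and recognising that $Q_0^B$ is the sharp threshold: it is the algebraic coincidence $Q + 2Vq = V$ together with the explicit discriminant computation that pins down the specific constant $V(1 - 2\sqrt{q(1-q)})$.
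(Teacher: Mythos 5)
Your proof is correct, and its skeleton (monotone Picard iteration from $v_0\equiv 0$, a uniform-in-$t$ bound, uniqueness by subtracting two solutions/Gronwall, monotonicity in $t$ plus boundedness giving the finite limit) is the same as the paper's sketch. Where you genuinely diverge is in how the key boundedness step is justified: the paper simply invokes ``the proof of the previous lemma'', i.e.\ the ODE-comparison bounds of Lemmas \ref{lem:norm-infty-v1}--\ref{lem:norm-infty-v2} (and, for the sharp range, the argument cited from Hong--Li in Lemma \ref{lem:equation}), whose explicit constants are cruder than $Q_0^B$; you instead exhibit the exact invariant bound $u^{\ast}$ as the smaller root of $Vq\,u^2+(\norm{\psi}{\infty}{}-Q)u+\norm{\psi}{\infty}{}=0$ and show via the identity $Q+2Vq=V$ that the discriminant is positive exactly on the hypothesised range $\norm{\psi}{\infty}{}<Q_0^B$ (with $Q_0^B\leq Q$ ensuring positivity of the roots). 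This buys a self-contained argument valid on the full stated range of $\theta$ and makes transparent why $Q_0^B$ is the natural threshold, whereas the paper's route leans on previously established estimates and an external reference. You also spell out the time-shift supersolution argument for monotonicity of $t\mapsto v_\varphi(x,t,\theta)$, which the paper leaves implicit, and replace the paper's small-$t$ contraction remark by a Gronwall estimate --- equivalent in substance. Two cosmetic points: like the paper you implicitly treat only $\theta>0$ (for $\theta\leq 0$ the statement is trivial or needs a separate one-line remark), and in the uniqueness step the Lipschitz constant should involve the common bound of the two competing solutions rather than $u^{\ast}$ specifically; neither affects the argument.
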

\noindent The proof is deferred to Section \ref{sec:one-particle}. We define now 
\begin{equation}
	A :=  \lim_{\theta \rightarrow Q_0^B/\norm{\varphi}{\infty}{}}\frac{\partial}{\partial \theta} v_\varphi(x,\theta).  \label{eq:def-A1}
\end{equation}
Fix $\varphi \in \SD_+$ and let us recall \eqref{def:measure-chi}. For $\nu\in BV([0,1])$ such that $\norm{\chi_\nu}{\infty}{}< Q_0^B/\norm{\varphi}{\infty}{}$ we define
\[
	\Lambda_\varphi(\nu) := \intc{1} \intr v_\varphi(x,\chi_\nu(t)) \dd{t} \dd{x},
\]
\[
	\Lambda_\varphi^*(f) := \sup_{\nu \in B }\sbr{ \ddp{f}{\nu} - \Lambda_\varphi(\nu)},
\]
where $B = \cbr{\nu\in BV([0,1]):\norm{\chi_\nu}{\infty}{} <Q_0^B/\norm{\varphi}{\infty}{} }$. This closely resembles the Legendre transform. Now we can formulate a large deviation principle

%
\begin{thm} 
	\label{thm:ldp2} Let $\varphi \in \SD_+$ and  $Y_T$ be the rescaled occupation time process given by \eqref{eq:rescaled-occupation}. Assume that $F_T = T$ and assumptions (A1)-(A3) are fulfilled. Then for any open set  $U \subset \mathcal{C}([0,1],\R)$,
	\begin{equation*}
		\liminf_{T\rightarrow +\infty} T^{-1} \log \pr{ \ddp{Y_T}{\varphi} \in U} \geq -\inf_{f\in U \cap \mathcal{C}_{0,A/H}} \Lambda^*(f). 
	\end{equation*}
	For any $f\in \mathcal{C}_{0,A/H}$ and any $\delta>0$ there exists $r>0$ such that
	\begin{equation*}
		\limsup_{T\rightarrow +\infty} T^{-1} \log \pr{ \ddp{Y_T}{\varphi} \in B(f,r)} \leq \delta - \Lambda^*(f),
	\end{equation*}
	where $B(f,r)$ is a ball in $\mathcal{C}([0,1], \R)$ of radius $r$ centred at $f$.
\end{thm}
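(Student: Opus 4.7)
The plan is to carry out a Gärtner--Ellis style analysis of the Laplace functional of the real-valued process $\rbr{\ddp{Y_T(t)}{\varphi}}_{t\in[0,1]}$ tested against signed measures $\nu\in B$. I would first express this Laplace functional through the branching-immigration structure of $N$ and identify its exponential rate as $T\ti$ with $\Lambda_\varphi(\nu)$. The lower bound would then follow from an exponential change of measure tailored to a profile $f$, while the local upper bound would come from the exponential Chebyshev inequality with a near-optimal tilt.

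\paragraph{Laplace functional and its limit.}
Fubini yields
\[
T\intc{1}\ddp{Y_T(t)}{\varphi}\dd{\nu(t)}=\intc{T}\ddp{N_s}{\chi_{\nu,T}(s)\varphi}\dd{s},
\]
so the positive Laplace transform equals $\lap{\intc{T}\ddp{N_s}{\chi_{\nu,T}(s)\varphi}\dd{s}}$. Standard disintegration over the Poissonian immigrations reduces this to
\[
\exp\rbr{H\intc{T}\intr u_T(x,r)\dd{x}\dd{r}},
\]
where $u_T(x,r)$ is the single-particle exponential functional on the interval $[r,T]$ with time-dependent weight $\chi_{\nu,T}(\cdot)\varphi$, and satisfies the obvious time-inhomogeneous analogue of the equation in Lemma \ref{lem:convergence-v2}. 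The defining condition $\norm{\chi_\nu}{\infty}{}<Q_0^B/\norm{\varphi}{\infty}{}$ of $B$ is precisely what Lemma \ref{lem:convergence-v2} requires to produce a bounded solution for each frozen value of $\chi_{\nu,T}$, with long-time limit $v_\varphi(\cdot,\theta)$. Since $\chi_{\nu,T}(s)=\chi_\nu(s/T)$ varies only on the macroscopic scale $T$ while subcriticality makes the one-particle functional equilibrate on the microscopic scale, a freezing-and-comparison argument should give $u_T(x,Tr)\to v_\varphi(x,\chi_\nu(r))$ for a.e.\ $r\in[0,1]$, whence
\[
\lim_{T\ti}T^{-1}\log\lap{T\ddp{\ddp{Y_T}{\varphi}}{\nu}}=\Lambda_\varphi(\nu)
\]
(the intensity $H$ being understood as absorbed into the definition of the rate function via the normalisation $A/H$).

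\paragraph{Lower and local upper bounds.}
For the lower bound, fix $f\in U\cap\mathcal{C}_{0,A/H}$. Since $f'$ takes values in a compact subset of $(0,A/H)$, formula \eqref{eq:def-A1} produces $\nu\in B$ whose tilted mean profile matches $f$ and that is a maximiser in the definition of $\Lambda_\varphi^*(f)$. Tilting $\mathbb{P}$ by the density proportional to $\exp\rbr{\intc{T}\ddp{N_s}{\chi_{\nu,T}(s)\varphi}\dd{s}}$ and verifying a weak law of large numbers $\ddp{Y_T}{\varphi}\to f$ in $\mathcal{C}([0,1],\R)$ under the tilted law, the usual change-of-measure estimate gives $\liminf_{T\ti}T^{-1}\log\pr{\ddp{Y_T}{\varphi}\in U}\geq \Lambda_\varphi(\nu)-\ddp{f}{\nu}=-\Lambda_\varphi^*(f)$. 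For the local upper bound near $f$, choose $\nu\in B$ that is $\delta/2$-suboptimal for $\Lambda_\varphi^*(f)$ and apply exponential Chebyshev: on $B(f,r)$ the linear functional $\ddp{\cdot}{\nu}$ differs from $\ddp{f}{\nu}$ by $O(r)$, so
\[
T^{-1}\log\pr{\ddp{Y_T}{\varphi}\in B(f,r)}\leq -\ddp{f}{\nu}+O(r)+T^{-1}\log\lap{T\ddp{\ddp{Y_T}{\varphi}}{\nu}};
\]
passing $T\ti$ and then choosing $r$ small enough yields the stated inequality.

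\paragraph{Main obstacle.}
The principal technical difficulty lies in the Laplace-limit step. The equation for $u_T$ is time-inhomogeneous and posed on the finite horizon $[0,T]$, whereas Lemma \ref{lem:convergence-v2} covers only the autonomous equation with constant $\theta$. Upgrading the pointwise convergence $u_T(x,Tr)\to v_\varphi(x,\chi_\nu(r))$ to the integrated form required above needs a $T$-independent dominating function and delicate semigroup estimates, exploiting assumptions (A1)--(A3) to control $\T{t}^{\chi\varphi}$ uniformly as $\chi$ varies over $B$, together with control of the boundary layer $s\approx T$. The restriction of the upper bound to balls rather than to arbitrary compact sets is intrinsic: as pointed out in Remark \ref{rem:exp-tight}, exponential tightness of $\{\ddp{Y_T}{\varphi}\}_T$ in $\mathcal{C}([0,1],\R)$ is not expected, so the local bound cannot be globalised without additional input.
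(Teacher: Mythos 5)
Your overall architecture (Laplace limit for tilted functionals with $\nu\in B$, exponential Chebyshev for the local upper bound) matches the paper, and your treatment of the local upper bound is essentially the paper's: pick a $\delta/2$-suboptimal point measure $\nu$ of finite total variation, note $|\ddp{f-g}{\nu}|\leq |\nu|\, r$ on $B(f,r)$, and apply Chebyshev together with the Laplace limit. However, your lower bound contains a genuine gap. You propose an exponential change of measure by the density proportional to $\exp\rbr{\intc{T}\ddp{N_s}{\chi_{\nu,T}(s)\varphi}\ds}$ and then assert ``verifying a weak law of large numbers $\ddp{Y_T}{\varphi}\to f$ in $\mathcal{C}([0,1],\R)$ under the tilted law''. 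This is precisely the hard functional-level statement: it requires identifying the tilted dynamics of a branching system with immigration (which under such a tilt is no longer the original system), proving concentration of the whole path in the supremum norm, and hence some tightness-type control — exactly the kind of control that, as Remark \ref{rem:exp-tight} indicates, is problematic here. You give no argument for it. In addition, you assume the supremum defining $\Lambda_\varphi^*(f)$ is attained by some $\nu\in B$ with ``tilted mean profile'' $f$; since $B$ is defined by a strict inequality $\norm{\chi_\nu}{\infty}{}<Q_0^B/\norm{\varphi}{\infty}{}$, attainment is not automatic and is never justified. The paper avoids both issues: it proves the lower bound by a finite-dimensional G\"artner--Ellis argument (for the vectors $(\ddp{Y_T(i/n)}{\varphi})_i$, where the condition $f\in\mathcal{C}_{0,A/H}$ guarantees the stationarity equations $f_j=\frac{H}{n}\sum_{i\le j}V_\varphi'(\theta_i+\cdots+\theta_n)$ have a solution inside the admissible region, via \eqref{eq:def-A1}), and then passes from cylinder sets $O_{n,\epsilon}$ to the ball $B(f,\delta)$ using only the almost-sure monotonicity of $t\mapsto\ddp{Y_T(t)}{\varphi}$, so no tilted law of large numbers or functional tightness is ever needed. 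Without either the monotonicity trick or a proof of the tilted functional LLN, your lower bound does not go through.

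A second, smaller point: you flag the Laplace limit $\lim_T T^{-1}\log\ev{}\exp\rbr{T\intc{1}\ddp{Y_T(t)}{\varphi}\nu(\dt)}=\Lambda_\varphi(\nu)$ as your ``main obstacle'' and leave it open, anticipating delicate semigroup estimates for the time-inhomogeneous equation. The paper settles this step (Lemma \ref{lem:convergence-in-ellis}) more directly: via Proposition \ref{prop:laplace-branching} the exponent is an integral of $v_{\Psi_T}(x,T(1-t),Tt)$, which has the probabilistic representation \eqref{def:v} as an exponential functional of the single-ancestor system; since subcriticality makes $\intc{+\infty}\ddp{N^x_s}{\varphi}\ds$ finite a.s., one can replace $\chi_\nu((1-t)+s/T)$ by the frozen value $\chi_\nu(1-t)$ by dominated convergence and recover $v_\varphi(x,\chi_\nu(1-t))$ from Lemma \ref{lem:convergence-v2}, followed by one more application of dominated convergence in $(x,t)$. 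So the freezing heuristic you describe is correct, but the justification is probabilistic rather than PDE-based, and for the BPS case the comparison Lemma \ref{lem:comparison} transfers the needed bounds. As it stands, your proposal establishes the local upper bound but not the lower bound, and leaves the Laplace limit unproved.
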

\begin{rem} \label{rem:exp-tight}
	We checked that for certain Markov families $\eta$ exponential tightness does not hold. Consequently, in these cases a strong large deviation principle cannot hold either. We conjecture that this phenomenon is general and exponential tightness does not hold for any Markov family $\eta$.
\end{rem}
\begin{rem}
	In the lower-bound formula the restriction to $\mathcal{C}_{0,A/H}$ is fairly acceptable. The paths of $Y_T$ are continuous and non-decreasing. Hence large class of open sets $U$ can be ``well-approximated'' by $\mathcal{C}_{0,A/H}$ in a sense that any function in $U \setminus \mathcal{C}_{0,A/H}$ has to increase ``very fast'' on some intervals. This requires a lot particles to gather in a small set which is not very likely in our system.
\end{rem}
\begin{rem}
	The restriction in the lower-bound case is more awkward. We conjecture that  
	\begin{equation*}
		\limsup_{T\rightarrow +\infty} T^{-1} \log \pr{ \ddp{Y_T}{\varphi} \in K} \leq -\inf_{f\in K \cap \mathcal{C}_{0,A/H}} \Lambda^*(f). 
	\end{equation*}
		is true for some class of compact sets $K$.
\end{rem}
To give full picture we also recall here a non-functional counterpart of the above theorem. Since Theorem \ref{thm:ldp2} is a weak version of large deviations we cannot use the contraction principle \cite[Theorem 4.2.1]{Dembo:1998fu} and the theorem below requires a separate proof (which obviously is much simpler than the one of Theorem \ref{thm:ldp2} and hence skipped). 
\begin{thm} \label{thm:ldp2-one-dim} 
	 Let $\varphi \in \SD_+$ and  $Y_T$ be the rescaled occupation time process given by \eqref{eq:rescaled-occupation}. Assume that $F_T = T$ and assumptions (A1)-(A3) are fulfilled. Then there exists $\delta>0$ such that for any open set $U \subset (0,\delta)$, closed set $L\subset (0,\delta)$ 
	\begin{equation}
		\liminf_{T\rightarrow +\infty} T^{-1} \log \pr{ \ddp{Y_T(1)}{\varphi} \in U} \geq -\inf_{x\in U} \Lambda^*(x), \label{eq:mdp-upper} 
	\end{equation}
	\begin{equation*}
		\limsup_{T\rightarrow +\infty} T^{-1} \log \pr{ \ddp{Y_T(1)}{\varphi} \in L} \leq -\inf_{x\in L} \Lambda^*(x),
	\end{equation*}
	where 
	\begin{equation*}
		\Lambda^*(x) = \sup_{\theta \leq Q^B_0/\norm{\varphi}{\infty}{}} \sbr{ x\theta - \ddp{v_\varphi(\cdot, \theta )}{\lambda} }, 
	\end{equation*}
	where $v_\varphi$ is given by \eqref{eq:vvv2}. 
\end{thm}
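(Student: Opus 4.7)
The plan is to apply the Gärtner-Ellis theorem to the scalar random variables $Z_T := \ddp{Y_T(1)}{\varphi} = T^{-1}\intc{T}\ddp{N_s}{\varphi}\ds$. The first step is to identify the limit of the normalised cumulant generating function
\begin{equation*}
	\Lambda_T(\theta) := \frac{1}{T}\log \ev \exp\rbr{\theta \intc{T}\ddp{N_s}{\varphi}\ds}.
\end{equation*}
By the Laplace-functional machinery for branching systems with homogeneous Poisson immigration (the same calculation underlying Lemma \ref{lem:convergence-v2} and used in \cite{Mios:2009oq,Hong:2005aa}), a Poissonian thinning of the immigration field combined with the Markov branching structure yields the factorisation
\begin{equation*}
	\ev \exp\rbr{\theta \intc{T}\ddp{N_s}{\varphi}\ds} = \exp\rbr{H\intc{T}\ddp{v_\varphi(\cdot,r,\theta)}{\lambda}\dd{r}},
\end{equation*}
where $v_\varphi(\cdot,\cdot,\theta)$ solves the integral equation of Lemma \ref{lem:convergence-v2}. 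Dividing by $T$ and passing to the limit via Lemma \ref{lem:convergence-v2} together with a Cesàro/dominated-convergence argument, I would obtain
\begin{equation*}
	\Lambda_T(\theta) \to \Lambda(\theta) := \ddp{v_\varphi(\cdot,\theta)}{\lambda}, \qquad \theta < Q_0^B/\norm{\varphi}{\infty}{},
\end{equation*}
(with the constant $H$ absorbed as in the formulation of the theorem), while for $\theta > Q_0^B/\norm{\varphi}{\infty}{}$ the left-hand side equals $+\infty$ for every sufficiently large $T$.

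Next I would verify the Gärtner-Ellis hypotheses on the effective domain. Convexity of each $\Lambda_T$, and hence of $\Lambda$, is immediate from Hölder's inequality. Differentiating the fixed-point equation of Lemma \ref{lem:convergence-v2} yields smoothness of $\theta\mapsto v_\varphi(x,\theta)$ on the open interval $(-\infty, Q_0^B/\norm{\varphi}{\infty}{})$, with $\Lambda'$ finite and strictly increasing and $\Lambda'(Q_0^B/\norm{\varphi}{\infty}{}-)$ equal, up to the same absorbed constant $H$, to the quantity $A$ of \eqref{eq:def-A1}. Setting $\delta$ to be this left-limit of $\Lambda'$, every point $x\in(0,\delta)$ is an exposed point of $\Lambda^*$ with an exposing hyperplane strictly inside $D_\Lambda$; the Gärtner-Ellis upper bound \cite[Theorem 2.3.6]{Dembo:1998fu} applied to closed $L\subset(0,\delta)$ and the exposed-point lower bound applied to open $U\subset(0,\delta)$ then give precisely the two inequalities asserted in Theorem \ref{thm:ldp2-one-dim}.

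The main technical obstacle is the dominated-convergence step in the computation of $\Lambda(\theta)$: one needs an $r$-uniform $L^1(\Rd)$-majorant for $v_\varphi(\cdot,r,\theta)$. I would construct it by combining the monotonicity of $r\mapsto v_\varphi(x,r,\theta)$ inherited from the proof of Lemma \ref{lem:convergence-v2} with the semigroup decay supplied by assumption (A7), which controls the Picard iterates of the defining equation uniformly in $r$. A secondary point is the behaviour of $\partial_\theta v_\varphi$ at the endpoint $\theta = Q_0^B/\norm{\varphi}{\infty}{}$, which enters the very definition of $\delta$ and must be analysed through the quadratic nonlinearity of the equation in Lemma \ref{lem:convergence-v2}. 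It is here that the subcriticality $q<1/2$ (which ensures $Q_0^B>0$) is used in an essential way, to keep the fixed-point contractive up to the boundary and to guarantee that the resulting $\delta$ is strictly positive.
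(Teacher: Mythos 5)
Your proposal is correct and follows essentially the route the paper intends (it explicitly skips this proof as a simpler version of Theorem \ref{thm:ldp2}): the Laplace-functional identity of Proposition \ref{prop:laplace-branching}, convergence of the normalised cumulant generating function to $H\ddp{v_\varphi(\cdot,\theta)}{\lambda}$ via Lemma \ref{lem:convergence-v2} (the one-dimensional analogue of Lemma \ref{lem:convergence-in-ellis}), and the G\"artner--Ellis theorem with the exposed-point lower bound, the interval $(0,\delta)$ playing exactly the role of $\mathcal{C}_{0,A/H}$ with $\delta$ determined by the boundary slope $A$ of \eqref{eq:def-A1}. The only minor caveat is that your uniform-in-$r$ majorant invokes (A7), which is not among the theorem's stated hypotheses (A1)--(A3); the paper glosses over the same integrability point, and in the time-homogeneous one-dimensional situation the monotonicity of $t\mapsto v_\varphi(x,t,\theta)$ together with monotone convergence and a Ces\`aro argument already suffices.
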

Now we present a strong moderate deviation principle.
\begin{thm}
	\label{thm:ldp22} Let $\varphi\in \SD_+ $ and $X_T$ be the rescaled occupation time fluctuations process given by (\ref{def:occupation-process}). Assume that $0<\alpha<1$, $F_T = T^{(1+\alpha)/2}$  and assumptions (A1)-(A7) are fulfilled. Then, for any open set $U\subset \mathcal{C}([0,1], \mathbb{R})$ and any closed set $L\subset \mathcal{C}([0,1], \mathbb{R})$ we have 
	\begin{equation*}
		\liminf_{T\rightarrow +\infty} T^{-\alpha} \log \pr{\ddp{{X}_T}{\varphi} \in U} \geq -\inf_{f\in U} \Lambda^*(f),
	\end{equation*}
	\begin{equation*}
		\limsup_{T\rightarrow +\infty} T^{-\alpha} \log \pr{\ddp{{X}_T}{\varphi} \in L} \leq -\inf_{f\in L} \Lambda^*(f),
	\end{equation*}
	where 
	\begin{equation*}
		\Lambda^*(f) = \frac{\norm{f}{H^1}{2}}{4H\rbr{T_1(\varphi) + Vq T_2(\varphi)} }, 
	\end{equation*}
	if $f\in H^1$ and $\Lambda^*(f) = \infty$ if $f\notin H^1$.
\end{thm}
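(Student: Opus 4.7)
The overall strategy is to combine an abstract G\"artner--Ellis/Baldi theorem in the Banach space $\mathcal{C}([0,1], \R)$ with exponential tightness at speed $T^\alpha$. I will first compute, for a sufficiently rich family of linear functionals, the limit of the logarithmic moment generating function of $\ddp{X_T}{\varphi}$, match it (by Legendre duality) with the quadratic rate function $\Lambda^*$, and then establish the exponential tightness that is unavailable at the full LDP speed $T$ (cf.\ Remark \ref{rem:exp-tight}) but which becomes accessible at the moderate speed $T^\alpha$ with $\alpha<1$.

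For $\nu \in BV([0,1])$ integration by parts yields
\[
    \intc{1} \ddp{X_T(t)}{\varphi}\dd{\nu(t)} = \frac{1}{T^{(1+\alpha)/2}} \intc{T}\chi_{\nu,T}(s)\, \ddp{N_s - \ev N_s}{\varphi}\dd{s},
\]
so the Laplace functional of the BPS with immigration reduces the study of $\ev\exp\rbr{T^\alpha \theta \intc{1}\ddp{X_T(t)}{\varphi}\dd{\nu(t)}}$ to an integral involving $H v_T$, where $v_T$ solves a nonlinear integral equation with a small driving parameter $\theta_T := \theta\, T^{(\alpha-1)/2} \to 0$. Expanding the equation and subtracting the linear term against $\ev N_s$ (as in \cite{Mios:2009oq,Hong:2005aa}), the first nontrivial contribution is quadratic: the ``free'' piece produces, after the change of variables $s=Tu$, the term $H T_1(\varphi)\intc{1}\chi_\nu(u)^2\dd{u}$, while the branching nonlinearity $Vq v_T^2$ yields $H Vq T_2(\varphi)\intc{1}\chi_\nu(u)^2\dd{u}$. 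Using (A1)--(A5) together with the exponential decay of $\T{t}^Q$ provided by (A6)--(A7), the cubic and higher-order remainders are $o(T^\alpha)$, giving
\[
    \lim_{T\ti} T^{-\alpha}\log \ev\exp\!\rbr{T^\alpha \theta \intc{1}\ddp{X_T(t)}{\varphi}\dd{\nu(t)}} = \theta^2 H\rbr{T_1(\varphi)+VqT_2(\varphi)} \intc{1} \chi_\nu(t)^2 \dd{t}.
\]
Since the family $\cbr{\nu \in BV([0,1])}$ separates points in the dual of $\mathcal{C}([0,1], \R)$, this identifies the limiting log-Laplace functional as a quadratic form, whose Legendre dual on $\mathcal{C}([0,1], \R)$ is exactly $\Lambda^*(f)=\norm{f}{H^1}{2}/[4H(T_1(\varphi)+VqT_2(\varphi))]$ (finite iff $f\in H^1$) by a direct Parseval-type computation ($\nu\mapsto c\intc{1}\chi_\nu^2$ has Legendre dual $f\mapsto (4c)^{-1}\norm{f'}{L^2}{2}$).

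The main technical obstacle will be exponential tightness: I must show
\[
    \lim_{\delta\to 0}\limsup_{T\ti} T^{-\alpha}\log \pr{w(\ddp{X_T}{\varphi},\delta)>\epsilon} = -\infty.
\]
The plan is to cover $[0,1]$ by $O(\delta^{-1})$ dyadic subintervals and bound the probability by a union bound over the oscillations on each piece. On a subinterval $[t_i,t_{i+1}]$ one applies the exponential Chebyshev inequality with the log-Laplace estimates from the previous step; the crucial gain is that the associated integral $\intc{1}\chi^2$ scales like $(t_{i+1}-t_i)^2$ after optimisation in $\theta$, which (combined with (A6)--(A7) to control spatial norms uniformly in $T$) defeats the $\delta^{-1}$ combinatorial factor and gives the required super-exponential decay as $\delta\to 0$. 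To upgrade pointwise bounds on $X_T(t_i)$ to bounds on the supremum inside $[t_i,t_{i+1}]$, one can either exploit the independence of distinct immigrated families via an Ottaviani-type maximal inequality, or apply a direct chaining argument at moderate exponents (finite because the normalisation $T^{(1+\alpha)/2}$ sits strictly below the LDP scale).

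Once both ingredients are in place, Baldi's theorem yields the upper bound on every closed set and the lower bound on every open set with rate function equal to the Legendre transform, completing the strong functional MDP. The BPS case is more delicate than the superprocess counterpart (Theorem \ref{thm:ldp23}) because the discrete particle structure forces one to keep track of stronger moment bounds, but the exponential decay assumptions (A6)--(A7) are precisely what make the estimates of the nonlinear remainders and of the oscillations uniform in $T$.
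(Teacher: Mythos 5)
Your first ingredient coincides with the paper's: the limit $T^{-\alpha}\log \mathbb{E}\exp\{T^{\alpha}\theta\int_0^1\ddp{X_T(t)}{\varphi}\,\nu(\dd{t})\}\to \theta^2H\,(T_1(\varphi)+VqT_2(\varphi))\int_0^1\chi_\nu^2$ is exactly what is proved there through the chain of approximations $\Lambda_1,\ldots,\Lambda_{2c}$, using (A6)--(A7), Lemma \ref{lem:norm-infty-v1}, the bound \eqref{eq:u-estimate} and Lemma \ref{lem:helper}, and the Legendre dual is the same quadratic $H^1$ functional. The routing afterwards differs only mildly: the paper gets one bound from the finite-dimensional G\"artner--Ellis theorem plus an approximation by cylinder sets $O_n$ and a modulus-of-continuity estimate on the rate function, and the other from the abstract results of Deuschel--Stroock combined with exponential tightness, whereas you invoke Baldi's theorem in $\mathcal{C}([0,1],\R)$; if you keep Baldi you still owe the exposed-point/G\^ateaux-differentiability verification for the quadratic limit (the exposing measures only give $f'$ of bounded variation, so a density argument in $H^1$ is needed for the open-set lower bound), which is routine but not free.

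The genuine gap is in the exponential tightness, which is where essentially all of the paper's work lies. Two concrete problems. First, the proposed Ottaviani-type maximal inequality ``via independence of distinct immigrated families'' is not available: increments of $\ddp{X_T}{\varphi}$ over disjoint time intervals are \emph{not} independent, since families immigrated before a partition point keep contributing after it; the paper avoids any independence by the deterministic dyadic supremum bound of Lemma \ref{lem:suprema} combined with the stopping-time criterion of Lemma \ref{thm:tight}. Second, and more seriously, ``the log-Laplace estimates from the previous step'' are asymptotic statements for a fixed $\nu$ and fixed $\theta$, while a chaining/dyadic argument needs non-asymptotic bounds for increments, uniform in the location and length of the interval and with Laplace parameters growing with $T$ and with the dyadic level. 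This is precisely the paper's Lemma \ref{lem:estimate-xyz}, valid only for $\theta'\leq T^{-\epsilon}$ and proved by re-running the whole approximation chain uniformly; and even that does not suffice at the finest scales: for $k\gtrsim\log T$ the optimised Chebyshev parameter leaves the regime where the Gaussian-type bound $\exp(c(t-s)\theta'^2)$ can hold, and the paper must switch to a different estimate (the $III(T)$ part), based on the series representation \eqref{eq:rep}, Lemma \ref{lem:hong-infty}, the $L^2$-decay assumption (A6), Lemma \ref{lem:norm-infty-v2} (exploiting $l_k\delta_k=1$) and the comparison Lemma \ref{lem:comparison} between $v^B$ and $v^S$. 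Without an argument of this type your plan stalls exactly at the supremum-inside-each-mesh-interval step (note also that $\int_0^1\chi_\nu^2$ for an increment over an interval of length $\delta$ scales like $\delta$, not $\delta^2$; the gain $\epsilon^2/\delta$ after optimisation defeats the $O(\delta^{-1})$ entropy of a fixed mesh, but says nothing about the oscillation within a mesh interval), so as written the tightness step is a missing core argument rather than a routine verification.
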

\begin{rem} \label{rem:moderate-devs} 
	This theorem ``links'' the central limit theorem and the large deviation principle. Roughly speaking $\alpha\rightarrow 0$ corresponds to Theorem \ref{thm:clt1} and $\alpha\rightarrow 1$ to Theorem \ref{thm:ldp2}.
\end{rem}
\begin{rem} \label{rem:Schilder} 
	Let us also notice a close resemblance of this result to the Schilder theorem \cite[Theorem 5.2.3]{Dembo:1998fu} which is a strong large deviation principle for the Wiener process. This together with Theorem \ref{thm:clt1} imply that the process of fluctuations of the occupation time is much alike the Wiener process. While the CLT establishes this fact for ``typical'' paths the MDP complements it to ``moderately rare'' events. That also means that the properties of the movement of the particles merely influence the properties of converge. It should be noted however that in the LDP of Theorem \ref{thm:ldp2} the analogy breaks, meaning that for ``extremely rare'' events the properties of the movement finally commence to play a significant role. 
\end{rem}
We present also a moderate deviation principle for $X_T(1)$.
\begin{corollary}\label{cor:a}
	Let $\varphi\in \SD_+$ and $X_T$ be the rescaled occupation time fluctuations process given by (\ref{def:occupation-process}). Assume that $0<\alpha<1$, $F_T = T^{(1+\alpha)/2}$  and assumptions (A1)-(A7) are fulfilled. Then for any open set $U\subset \R$ and any closed set $L\subset \R$ we have 
	\begin{equation*}
		\liminf_{T\rightarrow +\infty} T^{-\alpha} \log \pr{\ddp{{X}_T(1)}{\varphi} \in U} \geq -\inf_{x\in U} \Lambda^*(x),
	\end{equation*}
	\begin{equation*}
		\limsup_{T\rightarrow +\infty} T^{-\alpha} \log \pr{\ddp{{X}_T(1)}{\varphi} \in L} \leq -\inf_{x\in L} \Lambda^*(x),
	\end{equation*}
	where 
	\begin{equation*}
		\Lambda^*(x) = \frac{x^2}{4H\rbr{T_1(\varphi) + Vq T_2(\varphi)} }.
	\end{equation*}
\end{corollary}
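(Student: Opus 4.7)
The plan is to obtain Corollary \ref{cor:a} as a direct consequence of the functional moderate deviation principle of Theorem \ref{thm:ldp22} via the contraction principle \cite[Theorem 4.2.1]{Dembo:1998fu}. The key observation is that the one-dimensional variable $\ddp{X_T(1)}{\varphi}$ is obtained from the path $(\ddp{X_T(t)}{\varphi})_{t \in [0,1]}$ by the evaluation map $\pi_1 : \mathcal{C}([0,1],\mathbb{R}) \to \mathbb{R}$, $\pi_1(f) = f(1)$, which is obviously continuous. Since Theorem \ref{thm:ldp22} provides a strong (full) MDP with good rate function $\Lambda^*$ for the process $\ddp{X_T}{\varphi}$ at the speed $T^{-\alpha}$, the contraction principle immediately yields a strong MDP for $\ddp{X_T(1)}{\varphi}$ at the same speed with the contracted rate function
\[
	\tilde\Lambda^*(x) = \inf\cbr{\Lambda^*(f) : f \in H^1,\ f(1) = x}.
\]

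The only genuine task left is to identify $\tilde\Lambda^*$ explicitly. Writing $c := 4H(T_1(\varphi) + Vq\, T_2(\varphi))$, this reduces to the variational problem
\[
	\tilde\Lambda^*(x) = \frac{1}{c} \inf\cbr{\intc{1} (f'(s))^2\dd{s} : f' \in L^2([0,1]),\ \intc{1} f'(s)\dd{s} = x},
\]
where I used that $f \in H^1$ means $f(t) = \intc{t} f'(s)\dd{s}$, so $f(1)=x$ is the linear constraint $\intc{1} f'(s)\dd{s} = x$. By the Cauchy--Schwarz inequality,
\[
	x^2 = \rbr{\intc{1} f'(s)\dd{s}}^{\!2} \leq \intc{1}(f'(s))^2\dd{s},
\]
with equality iff $f'$ is constant, in which case $f'\equiv x$ and $f(t) = xt$. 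Hence the infimum equals $x^2$ and $\tilde\Lambda^*(x) = x^2/c$, matching the form stated in the corollary.

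There is essentially no serious obstacle here, because Theorem \ref{thm:ldp22} already carries all the analytic content (exponential tightness and the matching upper/lower bounds in the functional setting). The passage to the one-dimensional statement is purely ``soft'': the continuity of $\pi_1$, the fact that $\Lambda^*$ is a good rate function (its sublevel sets $\cbr{f : \|f\|_{H^1}^2 \leq R}$ are bounded in $H^1$, hence relatively compact in $\mathcal{C}([0,1],\mathbb{R})$ by Arzelà--Ascoli), and the Cauchy--Schwarz computation above. The only modest point to check is that the contracted rate function is indeed lower semicontinuous on $\mathbb{R}$ (trivial for $x \mapsto x^2/c$) and that the infimum is attained (it is, by $f(t)=xt$), which ensures consistency of the contraction formula.
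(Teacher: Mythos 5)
Your proof is correct and takes essentially the same route as the paper, which also obtains the corollary by applying the contraction principle of \cite[Theorem 4.2.1]{Dembo:1998fu} to Theorem \ref{thm:ldp22}; the Cauchy--Schwarz computation you supply to identify the contracted rate function as $x^2/\bigl(4H(T_1(\varphi)+VqT_2(\varphi))\bigr)$ is the standard (and correct) way to make that step explicit.
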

The proof is an easy application of the contraction principle \cite[Theorem 4.2.1]{Dembo:1998fu} to Theorem \ref{thm:ldp22}.

\subsection{Superprocess} \label{sec:subcritical} In this subsection $N$ denotes the superprocess described in Section \ref{sec:approx}. The processes \eqref{eq:rescaled-occupation} and \eqref{def:occupation-process} are defined with this $N$. Firstly we present a central limit theorem 
\begin{thm}
	\label{thm:clt2} Let $X_T$ be the rescaled occupation time fluctuations process given by (\ref{def:occupation-process}). Assume that $F_T = T^{1/2}$ and assumptions (A1)-(A5) are fulfilled. Then 
	\begin{equation*}
		X_T \rightarrow_{i} X,\quad \text{ and }\quad X_T \rightarrow_{fdd} X, 
	\end{equation*}
	where $X$ is a generalised $\SP$-valued Wiener process with covariance functional 
	\begin{equation*}
		Cov(\ddp{X_t}{\varphi_1}, \ddp{X_s}{\varphi_2}) = VHq \rbr{s\wedge t} T_2(\varphi_1, \varphi_2)\quad \varphi_1,\varphi_2 \in \SD,  
	\end{equation*}
	if, additionally, assumptions (A8)-(A9) are fulfilled then 
	\begin{equation*}
		X_T \rightarrow_{c} X. 
	\end{equation*}
\end{thm}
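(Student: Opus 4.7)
The plan is to follow the blueprint of Theorem~\ref{thm:clt1}, replacing the multiplicative functional identity of the BPS by the nonlinear log-Laplace equation of the superprocess encoded in $H_t\varphi$. I first establish $X_T\rightarrow_i X$ and $X_T\rightarrow_{fdd} X$ under (A1)--(A5), and then upgrade to $\rightarrow_c$ by verifying tightness in $\mathcal{C}([0,1],\SP)$ via Mitoma's theorem and fourth-moment estimates that invoke (A8)--(A9).

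\textbf{Identification of the limit.} Fix $\varphi_1,\dots,\varphi_k\in\SD$, times $0\leq t_1<\cdots<t_k\leq 1$, and $\theta_1,\dots,\theta_k\in\R$, and define $\Phi^T(x,s):=F_T^{-1}\sum_j \theta_j\varphi_j(x)\mathbf{1}_{[0,Tt_j]}(s)$, so that $\sum_j\theta_j\ddp{X_T(t_j)}{\varphi_j}=\intc{T}\ddp{N_s-\ev{N_s}}{\Phi^T(\cdot,s)}\ds$. Extending in the standard way the log-Laplace formula of Section~\ref{sec:approx} to time-dependent test functions yields
\begin{equation*}
\ev\exp\rbr{-\intc{T}\ddp{N_s}{\Phi^T(\cdot,s)}\ds}=\exp\cbr{-H\intc{T}\ddp{\lambda}{v^T_s}\ds},
\end{equation*}
where $v^T_s$ is the unique solution of a mild integral equation with driving term $\Phi^T$ and quadratic nonlinearity of strength $Vq$. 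Decomposing $v^T_s$ as its linear part plus a quadratic remainder, the linear part exactly compensates the shift $H\intc{T}\ddp{\lambda}{\ev{N_s}}\ds$ coming from the centering in $X_T$. The quadratic remainder, after rescaling by $F_T^{-2}=T^{-1}$, converges to $\tfrac12 VHq\sum_{i,j}\theta_i\theta_j(t_i\wedge t_j)T_2(\varphi_i,\varphi_j)$ by a change of variables together with dominated convergence based on (A4); higher-order terms in the Picard expansion of $v^T_s$ are of order $O(T^{-1/2})$ because each nonlinear iteration carries an extra factor $F_T^{-1}$, and (A5) supplies the decay at infinity needed to pass to the limit. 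This identifies the fdd limit as the Gaussian field with the announced covariance. Inserting the same argument with a generic $\Phi\in\mathcal{S}(\R^{d+1})$ into \eqref{eq:space-time-method} yields $\rightarrow_i$.

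\textbf{Tightness and functional convergence.} By Mitoma's theorem, tightness of $X_T$ in $\mathcal{C}([0,1],\SP)$ reduces to tightness of the real processes $(\ddp{X_T(t)}{\varphi})_{t\in[0,1]}$ for each $\varphi\in\SD$. For these it suffices to show $\ev{(\ddp{X_T(t)-X_T(s)}{\varphi})^4}\leq C(t-s)^2$, uniformly in $T$ and in $0\leq s<t\leq 1$. An explicit expression for this fourth centred moment is obtained by differentiating four times (in a real parameter $\theta$) the integrated log-Laplace equation for $v^T_s$; this produces a sum of integrals whose integrands are of the form $\T{u}^Q\varphi$ and $\T{u}^Q\sbr{\T{h}^Q\varphi\cdot \T{l}^Q\varphi}$, the latter arising from the binary branching nonlinearity. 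Assumption (A8) bounds $\norm{\T{u}^Q\varphi}{1}{}$ by $c(1\wedge u^{-2-\epsilon})$ and handles the products already present in the BPS case, while (A9) controls the mixed terms; combined they yield the required $(t-s)^2$ estimate. Together with $\rightarrow_{fdd}$ and the continuity of the Gaussian limit, this gives $X_T\rightarrow_c X$.

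\textbf{Main obstacle.} The principal technical difficulty is the tightness step: the nonlinearity in the equation for $H_s\varphi$ generates a rooted binary-tree Picard expansion whose leaves are scaled copies of $\varphi$, and controlling every pairing contributing to the fourth centred moment requires the delicate space-time estimates (A8)--(A9). The analogous bound in the BPS case \cite{Mios:2009oq} relied on the multiplicative functional of the branching tree; here one must adapt it to the continuous-mass setting in which the term $Vq(H_s\varphi)^2$ plays the role of the branching mechanism. By contrast, the $\rightarrow_{fdd}$ and $\rightarrow_i$ steps amount to a controlled Picard iteration whose convergence is governed by (A4)--(A5) and is essentially parallel to the corresponding calculation in \cite{Mios:2009oq}.
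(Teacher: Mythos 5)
Your proposal follows essentially the same route as the paper's proof: the Laplace functional of the space-time variable computed via the (log-)Laplace equation of the superprocess, the decomposition of its solution into the linear part $\tilde v_T$ (which cancels the centering) plus the quadratic remainder whose rescaled limit gives $VHq\,T_2$, error control via (A4)--(A5), and tightness through Mitoma's theorem and the fourth-moment bound $C(t-s)^2$ obtained by differentiating the Laplace transform, with (A8)--(A9) controlling the resulting $\T{u}^Q$-terms. This matches the argument in Section \ref{sec:scheme}, so no further comparison is needed.
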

\noindent Let us recall \eqref{def:Q} and denote
\begin{equation}
	Q^S_0 := \frac{Q^2}{4Vq}. \label{eq:q0S} 	
\end{equation}
The LDP contained in Theorem \ref{thm:ldp} is our next aim. To this end we need to formulate the following lemma
\begin{lem}
	\label{lem:convergence-v} Let $\varphi \in \SD_+$ and $\theta< Q^S_0/ \norm{\varphi}{\infty}{} $. Then the equation 
	\begin{equation*}
		v_{\varphi}(x,t,\theta) = \intc{t} \T{t-s}^Q  \theta \varphi(x) + Vq \intc{t} \T{t-s}^Q v_{\varphi}^2(\cdot,s,\theta)(x) \dd{s},
	\end{equation*}
	has a unique solution and the limit below is finite
	\begin{equation}
		v_\varphi(x, \theta) := \lim_{t\ti} v_{\theta\varphi} (t,x, \theta). \label{eq:vvv} 
	\end{equation}
\end{lem}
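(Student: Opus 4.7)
The plan is a Picard-type monotone iteration controlled by a sharp a priori sup bound whose existence is dictated exactly by the threshold $Q_0^S/\norm{\varphi}{\infty}{}$. I focus on the case $\theta \geq 0$; the case $\theta < 0$ is handled by an analogous Banach fixed-point argument on a small ball in $L^\infty$. Define $v_0 \equiv 0$ and
\[
v_{n+1}(x,t,\theta) := \intc{t} \T{t-s}^Q \theta\varphi(x)\ds + Vq \intc{t} \T{t-s}^Q v_n^2(\cdot,s,\theta)(x)\ds.
\]
Since $\varphi \in \SD_+$ and $\T{t}^Q$ preserves positivity, a straightforward induction yields $0 \leq v_n \leq v_{n+1}$. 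A second induction, using the substitution $u = t-s$ inside the integrals, shows that each $t \mapsto v_n(x,t,\theta)$ is non-decreasing.

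The crux of the argument is a uniform sup-norm bound. Using $\norm{\T{t}^Q \psi}{\infty}{} \leq e^{-Qt}\norm{\psi}{\infty}{}$ and setting $a_n := \sup_{x,t} v_n(x,t,\theta)$, the recursion gives $a_{n+1} \leq \theta \norm{\varphi}{\infty}{}/Q + Vq a_n^2/Q$. The fixed-point equation $Vq\, a^2 - Q a + \theta \norm{\varphi}{\infty}{} = 0$ has real non-negative roots iff $Q^2 \geq 4 Vq\, \theta \norm{\varphi}{\infty}{}$, which is equivalent to $\theta \leq Q_0^S/\norm{\varphi}{\infty}{}$. Under the strict assumption of the lemma the smaller root
\[
a^* := \frac{Q - \sqrt{Q^2 - 4Vq\, \theta \norm{\varphi}{\infty}{}}}{2Vq}
\]
is strictly positive, and the quadratic map $a \mapsto (\theta\norm{\varphi}{\infty}{} + Vq\, a^2)/Q$ leaves $[0,a^*]$ invariant, so $a_n \leq a^*$ for all $n$ by induction. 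I expect this step to be the main obstacle: without the threshold, no a priori bound is available and the Riccati-type nonlinearity would drive the iteration to infinity.

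With the monotone, uniformly bounded sequence in hand, monotone convergence applied on each side of the recursion produces a solution $v_\varphi(x,t,\theta) = \lim_n v_n(x,t,\theta) \leq a^*$ of the integral equation. Uniqueness within the class of non-negative solutions bounded by $a^*$ follows by subtracting two such solutions $v,\tilde v$, estimating $|v^2 - \tilde v^2| \leq 2 a^*\, |v - \tilde v|$, and applying Gronwall to
\[
\norm{(v - \tilde v)(\cdot,t)}{\infty}{} \leq 2 Vq\, a^* \intc{t} e^{-Q(t-s)} \norm{(v - \tilde v)(\cdot,s)}{\infty}{}\ds.
\]
Finally, the $t$-monotonicity passes to the limit along the iteration, so $t \mapsto v_\varphi(x,t,\theta)$ is non-decreasing; combined with the uniform bound $v_\varphi \leq a^*$, this forces the pointwise limit $v_\varphi(x,\theta) = \lim_{t \to \infty} v_\varphi(x,t,\theta)$ to exist and be finite, as claimed.
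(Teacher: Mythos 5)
Your proposal is correct and follows essentially the same route as the paper: the paper proves this lemma by the same monotone iteration from $0$ used for Lemma \ref{lem:convergence-v2} (non-decreasing iterates, a uniform sup bound coming from the associated quadratic/Riccati comparison whose solvability threshold is exactly $Q_0^S/\norm{\varphi}{\infty}{}$, a contraction/Gronwall argument for uniqueness). Your write-up is in fact somewhat more explicit than the paper's sketch, in particular in deriving $a^*$ from the discrete recursion and in using $t$-monotonicity plus the bound $a^*$ to justify finiteness of the $t\ti$ limit.
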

The proof follows the lines of the proof of Lemma \ref{lem:convergence-v2} and is skipped. We define now 
\begin{equation}
	A :=  \lim_{\theta \rightarrow Q^S_0/ \norm{\varphi}{\infty}{}}  \frac{\partial}{\partial \theta} v_\varphi(x,\theta). \label{eq:def-A2}
\end{equation}
Fix $\varphi \in \SD_+$ and let us recall \eqref{def:measure-chi}. For $\nu$ such that $\norm{\chi_\nu}{\infty}{}< Q^S_0/\norm{\varphi}{\infty}{}$ we define
\begin{equation}
	\Lambda_\varphi(\nu) = \intc{1} \intr v_\varphi(x,\chi_\nu(t)) \dd{t} \dd{x}, \label{eq:legendre-lambda} 
\end{equation}
\begin{equation}
	\Lambda_\varphi^*(f) = \sup_{\nu \in B }\sbr{ \ddp{f}{\nu} - \Lambda_\varphi(\nu)}, \label{eq:crippled-legendre} 
\end{equation}
where $B = \cbr{\nu:\norm{\chi_\nu}{\infty}{} <Q^S_0/\norm{\varphi}{\infty}{} }$. This closely resembles the Legendre transform. Now we can formulate a large deviation principle 
\begin{thm}
	\label{thm:ldp} Let $\varphi \in \SD_+$ and  $Y_T$ be the rescaled occupation time process given by \eqref{eq:rescaled-occupation}. Assume that $F_T = T$ and assumptions (A1)-(A3) are fulfilled. Then for any open set $U \subset \mathcal{C}([0,1],\R)$,
		\begin{equation*}
			\liminf_{T\rightarrow +\infty} T^{-1} \log \pr{ \ddp{Y_T}{\varphi} \in U} \geq -\inf_{f\in U \cap \mathcal{C}_{0,A/H}} \Lambda^*(f).
		\end{equation*}
		For any $f\in \mathcal{C}_{0,A/H}$ and any $\delta>0$ there exists $r>0$ such that
		\begin{equation*}
			\limsup_{T\rightarrow +\infty} T^{-1} \log \pr{ \ddp{Y_T}{\varphi} \in B(f,r)} \leq \delta - \Lambda^*(f). 
		\end{equation*}
		where $B(f,r)$ is a ball in $\mathcal{C}([0,1],\R)$ of radius $r$ centred at $f$.
\end{thm}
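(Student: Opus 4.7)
The plan is to adapt the Gärtner–Ellis-type argument used for the BPS in Theorem~\ref{thm:ldp2} to the superprocess, exploiting the fact that here the purely quadratic log-Laplace equation of Lemma~\ref{lem:convergence-v} replaces the richer equation of Lemma~\ref{lem:convergence-v2}. The first step is to identify the limiting log-moment generating functional. Rewriting the pairing as
\[
T\ddp{Y_T}{\varphi\otimes\nu}=\intc{T}\ddp{N_s}{\varphi}\chi_{\nu,T}(s)\ds,
\]
I would apply the superprocess Laplace identity of Section~\ref{sec:approx} in the time-inhomogeneous case with forcing $\psi_s(x)=\varphi(x)\chi_{\nu,T}(s)$. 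For $\nu\in B$ this gives
\[
\ev\exp\bigl(T\ddp{Y_T}{\varphi\otimes\nu}\bigr)=\exp\Bigl(H\intc{T}\ddp{\lambda}{v_T(\cdot,s)}\ds\Bigr),
\]
where $v_T$ is the forward solution of the positively tilted version of the equation in Lemma~\ref{lem:convergence-v}. Because $\chi_{\nu,T}$ is essentially constant on the intrinsic relaxation scale of that equation (whose decay is driven by $\T{\cdot}^Q$), a dominated-convergence and equicontinuity argument, under assumptions (A1)–(A3) together with the uniform bound from $\nu\in B$, yields
\[
\Lambda_T(\nu):=T^{-1}\log\ev\exp\bigl(T\ddp{Y_T}{\varphi\otimes\nu}\bigr)\longrightarrow\Lambda_\varphi(\nu),\qquad T\ti.
\]

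For the lower bound I would apply the standard Gärtner–Ellis / exponential tilting prescription. The functional $\Lambda_\varphi$ is convex on $B$ and its Gâteaux derivative at $\nu_0$ has the form $r\mapsto\intc{r}\intr\partial_\theta v_\varphi(x,\chi_{\nu_0}(t))\dx\dt$; the constraint $f\in\mathcal{C}_{0,A/H}$ is precisely what ensures that $f'$ lies in the range of this derivative map, so that any $f\in U\cap\mathcal{C}_{0,A/H}$ with $\Lambda^*_\varphi(f)<\infty$ corresponds, up to an arbitrarily small perturbation, to an interior maximiser $\nu_0\in B$ of $\ddp{f}{\nu}-\Lambda_\varphi(\nu)$. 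Tilting the law of the system by $\exp(T\ddp{Y_T}{\varphi\otimes\nu_0})/\ev\exp(\cdots)$ produces a measure under which $\ddp{Y_T}{\varphi}$ satisfies a weak law of large numbers with deterministic limit $f$; the standard change-of-measure estimate then converts this into the announced $\liminf$ bound.

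For the local upper bound, Chebyshev's inequality gives for every $\nu\in B$ and every $r>0$
\[
\pr{\ddp{Y_T}{\varphi}\in B(f,r)}\le\exp\bigl(-T\ddp{f}{\nu}+Tr\norm{\nu}{BV}{}+T\Lambda_T(\nu)\bigr).
\]
Choosing $\nu$ to be a near-maximiser of (\ref{eq:crippled-legendre}) and $r$ sufficiently small (depending on $\nu$ and $\delta$), the convergence $\Lambda_T(\nu)\to\Lambda_\varphi(\nu)$ yields, after taking $\limsup$, the bound $\delta-\Lambda^*_\varphi(f)$.

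The main obstacle is controlling $v_T$ uniformly as $\norm{\chi_\nu}{\infty}{}$ approaches $Q^S_0/\norm{\varphi}{\infty}{}$: at this boundary the integral equation of Lemma~\ref{lem:convergence-v} ceases to have a bounded solution, so the near-maximiser of (\ref{eq:crippled-legendre}) must remain strictly inside $B$. This is the ultimate reason why only the local form of the upper bound is available and why the $\mathcal{C}_{0,A/H}$ restriction appears in the lower bound (cf.\ Remark~\ref{rem:exp-tight}). Quantifying this boundary behaviour through a priori estimates on $v_T$ in terms of $\norm{\chi_\nu}{\infty}{}$, and establishing equicontinuity of the family $\{v_T\}_T$, is the most delicate analytic input; it parallels, and is strictly simpler than, the corresponding estimate in the proof of Theorem~\ref{thm:ldp2}, because the nonlinearity is purely quadratic and there is no cross term $\theta\varphi\cdot v$.
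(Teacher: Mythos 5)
Your first step (convergence of the scaled log--Laplace functionals, i.e.\ $\Lambda_T(\nu)\to\Lambda_\varphi(\nu)$ for $\nu$ strictly inside $B$) and your local upper bound (exponential Chebyshev with a near-maximiser $\nu$ of \eqref{eq:crippled-legendre} and $r$ chosen small relative to the total variation of $\nu$ and $\delta$) coincide with what the paper does, via its Lemma~\ref{lem:convergence-in-ellis} and Proposition~\ref{prop:laplace-super}. Your closing remarks on the boundary behaviour near $Q^S_0/\norm{\varphi}{\infty}{}$ and the role of $\mathcal{C}_{0,A/H}$ are also consistent with the paper's picture.

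The gap is in the lower bound. You assert that tilting by $\exp\bigl(T\ddp{Y_T}{\varphi\otimes\nu_0}\bigr)$ yields a measure under which $\ddp{Y_T}{\varphi}$ obeys a weak law of large numbers with limit $f$ \emph{in} $\mathcal{C}([0,1],\R)$, and then invoke the standard change-of-measure estimate. But concentration in the sup norm is exactly the hard part here: a tilted log--Laplace computation only gives you convergence of finite-dimensional marginals towards the values $f(i/n)$, and upgrading that to $\pr{}$-mass on the ball $B(f,\delta)$ requires some tightness-type control of the tilted laws, which you neither prove nor can take for granted (the paper even remarks, Remark~\ref{rem:exp-tight}, that exponential tightness fails for this model, which is why no full functional LDP is claimed). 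The paper circumvents this entirely by a different device: it first proves a finite-dimensional LDP for $(\ddp{Y_T(1/n)}{\varphi},\ldots,\ddp{Y_T(1)}{\varphi})$ via the G\"artner--Ellis theorem (the restriction on increments, $0<f_i-(f_{i-1}+\cdots+f_1)<A/(nH)$, guaranteeing an interior maximiser in $B$), and then passes from the cylinder set $O_{n,\epsilon}$ to the ball $B(f,\delta)$ by exploiting that the paths of $Y_T$ are almost surely nondecreasing, so that $O_{n,\epsilon}\setminus B(f,\delta)$ contains only non-monotone functions and hence carries no mass. Without this monotonicity reduction (or an equivalent argument converting gridpoint closeness into sup-norm closeness), your tilting scheme does not deliver the functional $\liminf$ bound as stated; if you add it, your route essentially collapses back to the paper's proof.
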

To give full picture we also recall here a non-functional counterpart of the above theorem. It is a slightly modified version of \cite[Theorem 4.1]{Hong:2005aa}.
\begin{thm} \label{thm:ldp-one-dim}
	 Let  $\varphi\in \SD_+$ and $Y_T$ be the rescaled occupation time process given by \eqref{eq:rescaled-occupation}. Assume that $F_T = T$ and assumptions (A1)-(A3) are fulfilled. Then there exists $\delta>0$ for any open set $U \subset (0,\delta)$ and any closed set $L\subset (0,\delta)$ we have 
	\begin{equation*}
		\liminf_{T\rightarrow +\infty} T^{-1} \log \pr{ \ddp{Y_T(1)}{\varphi} \in U} \geq -\inf_{x\in U} \Lambda^*(x),
	\end{equation*}
	\begin{equation*}
		\limsup_{T\rightarrow +\infty} T^{-1} \log \pr{ \ddp{Y_T(1)}{\varphi} \in L} \leq -\inf_{x\in L} \Lambda^*(x),
	\end{equation*}
	where 
	\begin{equation*}
		\Lambda^*(x) = \sup_{\theta \leq Q^S_0/ \norm{\varphi}{\infty}{}} \sbr{ x\theta - \ddp{v_\varphi(\cdot, \theta )}{\lambda} }, 
	\end{equation*}
	where $v_\varphi(\theta)$ is given by \eqref{eq:vvv}. 
\end{thm}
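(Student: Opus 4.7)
The plan is to apply the Gärtner--Ellis theorem to the scalar random variable $\ddp{Y_T(1)}{\varphi}$ at speed $T$. The key input is the limiting logarithmic moment generating function
\[
\Lambda(\theta) := \lim_{T\to +\infty} T^{-1} \log \lap{\theta T \ddp{Y_T(1)}{\varphi}},
\]
after which the rate function is obtained as the Fenchel--Legendre transform of $\Lambda$, and the restriction to the small interval $(0,\delta)$ in the statement is dictated by the range of $\theta\mapsto\Lambda'(\theta)$ on the effective domain of $\Lambda$.

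Step 1 (log-Laplace identity). Using the branching-with-immigration structure of the superprocess recalled in Section~\ref{sec:approx} applied to the additive functional $\intc{T}\ddp{N_s}{\varphi}\ds$, one obtains
\[
\lap{\theta \intc{T} \ddp{N_s}{\varphi}\ds} = \exp\rbr{H\intc{T} \ddp{\lambda}{v_\varphi(\cdot,s,\theta)}\ds},
\]
where $v_\varphi(\cdot,s,\theta)$ is the solution of the integral equation of Lemma~\ref{lem:convergence-v}. This identity is derived by running the log-Laplace formula of Section~\ref{sec:approx} with the time-dependent input $\theta\varphi$ inserted through the Markov property; assumption~(A3) guarantees the associated Feynman--Kac semigroup is well-behaved, and a Picard iteration then identifies the resulting nonlinear equation with the one in Lemma~\ref{lem:convergence-v}. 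Noting that $T\ddp{Y_T(1)}{\varphi}=\intc{T}\ddp{N_s}{\varphi}\ds$, this is exactly the moment generating function we need.

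Step 2 (Cesàro limit and Gärtner--Ellis). For every $\theta< Q_0^S/\norm{\varphi}{\infty}{}$, Lemma~\ref{lem:convergence-v} gives $v_\varphi(\cdot,s,\theta)\to v_\varphi(\cdot,\theta)$ as $s\to +\infty$; monotonicity of the Picard iterates together with the $\lambda$-integrability of the limit (itself a consequence of the bound on $\mathcal{U}^Q\varphi$ used in Lemma~\ref{lem:convergence-v}) yields
\[
\Lambda(\theta) = H\ddp{\lambda}{v_\varphi(\cdot,\theta)},\qquad \theta<Q_0^S/\norm{\varphi}{\infty}{},
\]
while $\Lambda(\theta)=+\infty$ beyond that threshold. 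Differentiating the integral equation for $v_\varphi$ in $\theta$ shows that $\Lambda$ is convex, lower semicontinuous and $C^1$ on its open interval of finiteness, with $\Lambda'(\theta)\nearrow HA$ as $\theta\to Q_0^S/\norm{\varphi}{\infty}{}$, where $A$ is defined in~\eqref{eq:def-A2}. For every $x$ in the interior of the image of $\Lambda'$ the supremum defining $\Lambda^*(x)$ is attained at an interior $\theta^*$, so $x$ is an exposed point of $\Lambda^*$ with exposing hyperplane $\theta^*$; Gärtner--Ellis \cite[Theorem~2.3.6]{Dembo:1998fu} then yields both the lower and the upper bound in the stated form, with $\delta$ chosen to be the right endpoint of the image of $\Lambda'$ (the formula for $\Lambda^*$ displayed in the theorem matching the supremum above up to the normalisation between $\theta$ and $H\theta$).

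The main obstacle is that $\Lambda$ has a bounded effective domain and, when $A<+\infty$, fails to be essentially smooth at the right endpoint $Q_0^S/\norm{\varphi}{\infty}{}$; consequently the Gärtner--Ellis machinery only produces a full LDP on the interior of the image of $\Lambda'$, which is a small right neighbourhood of zero and accounts for the restriction $U,L\subset(0,\delta)$ in the statement. The analytical heart of Step~2 is to prove differentiability of $v_\varphi(\cdot,\theta)$ in $\theta$ uniformly on compact sub-intervals bounded away from $Q_0^S/\norm{\varphi}{\infty}{}$; this is obtained by a Gronwall-type argument on the linearisation of the integral equation of Lemma~\ref{lem:convergence-v}, using the Feller property (A3) and the $\lambda$-integrability of $\mathcal{U}^Q\varphi$ to close the estimate. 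Once this smoothness is in hand, the remainder of the argument is the standard Cramér/Gärtner--Ellis template.
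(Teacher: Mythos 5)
Your proposal is correct and follows essentially the same route the paper relies on: the paper treats this theorem as a slight modification of Hong--Li's Theorem 4.1 and of the finite-dimensional step in the proof of Theorem \ref{thm:ldp}, namely the log-Laplace identity of Proposition \ref{prop:laplace-super}, the convergence $T^{-1}\log\mathbb{E}\exp(\theta T\langle Y_T(1),\varphi\rangle)\to H\langle\lambda,v_\varphi(\cdot,\theta)\rangle$ as in Lemma \ref{lem:convergence-in-ellis}, and the G\"artner--Ellis theorem with the restriction to $(0,\delta)$ coming from the bounded effective domain and lack of steepness at $\theta=Q_0^S/\Vert\varphi\Vert_\infty$ (the endpoint $\delta$ being governed by $A$ of \eqref{eq:def-A2}, exactly as you identify). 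Your observation about the missing factor $H$ in the displayed rate function is a genuine normalisation slip in the paper's statement, not a flaw in your argument.
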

Now we present a strong moderate deviation principle
\begin{thm}
	\label{thm:ldp23} Let $\varphi \in \SD_+$ and $X_T$ be the rescaled occupation time fluctuations process given by (\ref{def:occupation-process}). Assume that $F_T = T^{(1+\alpha)/2}$, $0<\alpha<1$ and assumptions (A1)-(A7) are fulfilled. Then, for any open set $U \subset \mathcal{C}([0,1], \mathbb{R})$ and any closed set $L\subset \mathcal{C}([0,1], \mathbb{R})$ we have 
	\begin{equation*}
		\liminf_{T\rightarrow +\infty} T^{-\alpha} \log \pr{\ddp{{X}_T}{\varphi} \in U} \geq -\inf_{f\in U} \Lambda^*(f),
	\end{equation*}
	\begin{equation*}
		\limsup_{T\rightarrow +\infty} T^{-\alpha} \log \pr{\ddp{{X}_T}{\varphi} \in L} \leq -\inf_{f\in L} \Lambda^*(f),
	\end{equation*}
	where 
	\begin{equation*}
		\Lambda^*(f) = \frac{\norm{f}{H^1}{2}}{4H Vq T_2(\varphi) }, \label{eq:legare2} 
	\end{equation*}
	if $f\in H^1$ and $\Lambda^*(f) = \infty$ if $f\notin H^1$.
\end{thm}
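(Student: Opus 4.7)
\textbf{Proof plan for Theorem \ref{thm:ldp23}.} I would follow a Gärtner--Ellis/Baldi strategy in the functional setting: compute the normalised log-Laplace functional of $\ddp{X_T}{\varphi}$ tested against signed measures on $[0,1]$, identify its limit $\Lambda(\nu)$, check that its Legendre transform equals the stated $\Lambda^*$, and combine with exponential tightness in $\mathcal{C}([0,1],\R)$. The exponential tightness will be the main obstacle.

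\emph{Log-Laplace computation.} Fix $\nu\in BV([0,1])$ and rewrite via Fubini
\[
\intc{1} \ddp{X_T(t)}{\varphi}\nu(\dt) = \frac{1}{F_T}\intc{T}\ddp{N_s-\ev N_s}{\varphi}\chi_{\nu,T}(s)\,\ds.
\]
Setting $\theta_T := T^{\alpha}/F_T = T^{(\alpha-1)/2}\to 0$, the time-inhomogeneous Laplace functional of the superprocess (a variant of Lemma~\ref{lem:convergence-v}) yields
\[
\ev\exp\rbr{T^\alpha \intc{1}\ddp{X_T(t)}{\varphi}\nu(\dt)} = \exp\cbr{H\intc{T}\ddp{\lambda}{v_T(\cdot,s)}\ds - T^\alpha m_T(\nu)},
\]
where $T^\alpha m_T(\nu) := \theta_T\intc{T}\chi_{\nu,T}(s)\ev\ddp{N_s}{\varphi}\ds$ is the mean-correction and
\[
v_T(x,s) = \theta_T\intc{s}\T{s-r}^Q[\chi_{\nu,T}(r)\varphi](x)\,\dd{r} + Vq\intc{s}\T{s-r}^Q v_T^2(\cdot,r)(x)\,\dd{r}.
\]

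\emph{Expansion, identification of $\Lambda$, and Legendre duality.} Since $\theta_T\to 0$, I would expand $v_T = \theta_T u_T + \theta_T^2 w_T + o(\theta_T^2)$ in $L^1$ uniformly in $s\in[0,T]$, with $u_T(x,s) := \intc{s}\T{s-r}^Q[\chi_{\nu,T}(r)\varphi](x)\dd{r}$ the linear part and $w_T = Vq\intc{\cdot}\T{\cdot-r}^Q u_T^2(\cdot,r)\,\dd{r}$ the leading quadratic correction; a Picard iteration controlled by the $L^1$-contraction (A7) justifies the expansion. The order-$\theta_T$ contribution matches $T^\alpha m_T(\nu)$ up to a boundary term of size $O(\theta_T)$ stemming from the tail of $\T{\cdot}^Q\varphi$ (hence $o(T^\alpha)$ by (A7)). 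For the quadratic term, rescaling $s=Tt$, $r=Tu$ and using the asymptotic $u_T(\cdot,Tt)\to \chi_\nu(t)\mathcal{U}^Q\varphi$ in $L^1$ (via (A6)--(A7) and dominated convergence) yields
\[
\Lambda(\nu) := \lim_{T\ti} T^{-\alpha}\log\ev\exp\rbr{T^\alpha \intc{1}\ddp{X_T(t)}{\varphi}\nu(\dt)} = HVq T_2(\varphi)\intc{1}\chi_\nu(u)^2\,\du.
\]
For $f\in H^1$, integration by parts gives $\ddp{f}{\nu} = \intc{1}f'(t)\chi_\nu(t)\,\dt$, so
\[
\Lambda^*(f) = \sup_{\chi_\nu}\sbr{\intc{1}f'(t)\chi_\nu(t)\,\dt - HVq T_2(\varphi)\norm{\chi_\nu}{L^2}{2}}
\]
is attained at $\chi_\nu = f'/(2HVq T_2(\varphi))$ with value $\norm{f}{H^1}{2}/(4HVq T_2(\varphi))$, matching the claim. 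Baldi's theorem \cite[Thm.~4.5.20]{Dembo:1998fu} then delivers the lower bound on open sets and the upper bound on compact sets.

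\emph{Exponential tightness.} The hard part is upgrading the upper bound from compact to closed sets via exponential tightness in $\mathcal{C}([0,1],\R)$ at speed $T^\alpha$. Reapplying the above Laplace machinery to the increment $\ddp{X_T(t)-X_T(s)}{\varphi}$ (equivalent to taking $\nu = \delta_t - \delta_s$) should produce Gaussian-type increment bounds of the form
\[
\ev\exp\rbr{\theta T^\alpha |\ddp{X_T(t)-X_T(s)}{\varphi}|}\leq \exp\rbr{C\theta^2 (t-s) T^\alpha},
\]
uniform in $T$ and $\theta$ bounded, by controlling the localised $v_T$-equation with (A6)--(A7) to avoid blow-up on short time scales. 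A chaining argument over dyadic partitions then bounds $w(\ddp{X_T}{\varphi},\delta)$ at speed $T^\alpha$; (A5), (A8)--(A9) are needed to keep the summed chaining bounds finite uniformly. Once tightness is established, the upper bound extends to closed sets and the functional MDP is complete.
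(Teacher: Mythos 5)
Your overall architecture coincides with the paper's: the limit $\Lambda(\nu)=HVq\,T_2(\varphi)\int_0^1\chi_\nu^2$ of the normalised log-Laplace functionals over $\nu\in BV([0,1])$ (obtained, as you do, by peeling off the linear/mean part via $u=v-\tilde v$ and keeping the leading quadratic correction, controlled by (A6)--(A7)), the Legendre identification of $\|f\|_{H^1}^2/(4HVqT_2(\varphi))$, and exponential tightness at speed $T^\alpha$ are exactly the paper's ingredients. Two points, however, are genuine gaps. First, you invoke Baldi's theorem to get the lower bound on open sets, but you never address its exposed-point hypothesis: the natural exposing functional for $f\in H^1$ would need $\chi_\nu=f'/(2HVqT_2(\varphi))$ to be the tail function of a finite signed measure, which fails for general $f'\in L^2$, so you must either prove a density-of-exposed-points approximation (mollifying $f'$ and showing $\Lambda^*$-convergence) or argue differently. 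The paper avoids this entirely: it applies the finite-dimensional G\"artner--Ellis theorem to $(\ddp{X_T(1/n)}{\varphi},\ldots,\ddp{X_T(1)}{\varphi})$, lower-bounds $\pr{\ddp{X_T}{\varphi}\in B(f,\epsilon)}$ through cylinder sets $O_n$, and disposes of $O_n\setminus B(f,\epsilon)$ with the already-proved upper bound and a modulus-of-continuity estimate.

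Second, your exponential tightness sketch rests on an increment bound $\ev\exp\rbr{\theta T^\alpha|\ddp{X_T(t)-X_T(s)}{\varphi}|}\leq\exp\rbr{C\theta^2(t-s)T^\alpha}$ claimed ``uniform in $T$ and $\theta$ bounded''. This is stronger than what can be proved and hides where the real work lies: the Gaussian-type bound comes from approximating the nonlinear log-Laplace functional by its quadratic part, which is only legitimate in a small-parameter regime; the paper's Lemma 5.13 accordingly restricts the Laplace argument to $0\leq\theta'\leq T^{-\epsilon}$, with constants uniform over all time windows. Because dyadic chaining forces the Laplace parameter to grow with the dyadic level, a single such bound cannot cover all scales, and the paper must split the chaining sum into three regimes ($I$, $II$, $III$), treating the finest scales by a completely different crude estimate based on the series representation \eqref{eq:rep}, the comparison Lemma 5.8 and the $\mathcal{L}^2$-decay of (A6), together with a stopping-time criterion for exponential tightness. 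Your plan gives no mechanism for the fine scales, and your remark that (A5), (A8)--(A9) are needed there is off the mark: the theorem assumes only (A1)--(A7), and the paper's tightness proof runs on (A6)--(A7). So the identification of the rate function is essentially correct and matches the paper, but the lower bound via Baldi and the tightness argument as sketched would not go through without the additional ideas above.
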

The contraction principle \cite[Theorem 4.2.1]{Dembo:1998fu} can be applied here to obtain a moderate deviation principle for $X_T(1)$. This result was already known \cite[Theorem 5.1]{Hong:2005aa}, we put it here for readers' convenience 
\begin{corollary}
	Let $\varphi\in \SD_+$ and $X_T$ be the rescaled occupation time fluctuations process given by (\ref{def:occupation-process}). Assume that $0<\alpha<1$, $F_T = T^{(1+\alpha)/2}$  and assumptions (A1)-(A7) are fulfilled. Then, for any open set $U \subset \R$ and any closed set $L\subset \R$ we have 
	\begin{equation*}
		\liminf_{T\rightarrow +\infty} T^{-\alpha} \log \pr{\ddp{{X}_T(1)}{\varphi} \in U} \geq -\inf_{x\in U} \Lambda^*(x),
	\end{equation*}
	\begin{equation*}
		\limsup_{T\rightarrow +\infty} T^{-\alpha} \log \pr{\ddp{{X}_T(1)}{\varphi} \in L} \leq -\inf_{x\in L} \Lambda^*(x),
	\end{equation*}
	where 
	\begin{equation*}
		\Lambda^*(x) = \frac{x^2}{4H Vq T_2(\varphi) }.
	\end{equation*}
\end{corollary}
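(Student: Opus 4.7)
The plan is to deduce this corollary directly from Theorem \ref{thm:ldp23} by applying the contraction principle \cite[Theorem 4.2.1]{Dembo:1998fu} to the evaluation map $\pi_1: \mathcal{C}([0,1], \R) \to \R$, $\pi_1(f) = f(1)$. This map is continuous, so the image of the functional MDP rate function under $\pi_1$ gives the one-dimensional rate function, and the image of the process $\ddp{X_T}{\varphi}$ under $\pi_1$ is precisely $\ddp{X_T(1)}{\varphi}$.

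The only nontrivial step is to verify that the contracted rate function coincides with $x^2/(4HVqT_2(\varphi))$. By the contraction principle the pushforward rate function is
\[
	I^*(x) = \inf\cbr{\Lambda^*(f) : f \in \mathcal{C}([0,1],\R),\, f(1) = x}.
\]
Since $\Lambda^*(f) = +\infty$ for $f\notin H^1$, the infimum is taken over $f\in H^1$ with $f(1)=x$. For such $f$ we have $f(1) = \intc{1} f'(s)\ds$, and by the Cauchy--Schwarz inequality
\[
	x^2 = \rbr{\intc{1} f'(s)\ds}^2 \leq \intc{1} (f'(s))^2 \ds = \norm{f}{H^1}{2},
\]
with equality if and only if $f'$ is constant, i.e.\ $f(t) = xt$. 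Consequently the infimum is attained at the straight line and equals $x^2$, which yields
\[
	I^*(x) = \frac{x^2}{4HVqT_2(\varphi)}.
\]

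I do not anticipate any serious obstacle: once the measurability and continuity of $\pi_1$ on $\mathcal{C}([0,1],\R)$ are noted, the argument reduces to the elementary variational computation above, analogous to the derivation of Corollary \ref{cor:a} from Theorem \ref{thm:ldp22}.
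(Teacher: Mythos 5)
Your proposal is correct and follows exactly the paper's route: the paper derives this corollary by applying the contraction principle of Dembo--Zeitouni to Theorem \ref{thm:ldp23} via the (continuous) evaluation map $f\mapsto f(1)$, just as you do. Your additional Cauchy--Schwarz computation showing that the contracted rate $\inf\{\Lambda^*(f): f\in H^1,\ f(1)=x\}$ is attained at the linear path $f(t)=xt$ and equals $x^2/(4HVqT_2(\varphi))$ is precisely the step the paper leaves implicit, and it is carried out correctly (note that $f(0)=0$ for $f\in H^1$ by the paper's definition, so $f(1)=\int_0^1 f'(s)\,\mathrm{d}s$ as you use).
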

\begin{rem}
Remarks concerning the BPS contained in the previous subsection are also valid for the superprocess.
\end{rem}

\section{Proofs} \label{sec:proof} 
The proofs for the branching particle system and the superprocess are similar and are presented together.
\subsection{Notation} \label{sec:notation} 
 In the proofs we use the following notation. Firstly,  $\Phi$ is always of either of two forms 
\begin{equation}
	\Phi(x,s) = \varphi(x)\psi(s), \varphi\in \SD, \psi \in \mathcal{S}(\mathbb{R}). \label{def:simplification} 
\end{equation}
\begin{equation*}
	\Phi(x, \dd{s}) = \varphi(x) \nu(\dd{s} ), \varphi\in \SD, \nu \text{ is a signed measure of finite variation}
\end{equation*}
For the $\Phi$ of the first form we define
\begin{equation}
	\varphi_{T}\left(x\right):=\frac{1}{F_{T}}\varphi\left(x\right), \:\chi(s) := \int_s^1\psi(u) \dd{u},\: \chi_{T}:=\chi\left(\frac{t}{T}\right). \label{def:notation-T} 
\end{equation}
and for the $\Phi$ of the second form we use $\chi$ given by \eqref{def:measure-chi}. Note that it is a c\'adl\'ag function. Secondly, throughout the paper $\Psi$ always is
\begin{equation*}
	\Psi(x,s) = \varphi(x)\chi(s), 
\end{equation*}
\begin{equation}
	\Psi_{T}\left(x,s\right)=\frac{1}{F_{T}}\Psi\left(x,\frac{s}{T}\right) = \varphi_T(x)\chi_T(s). \label{def:simpl} 
\end{equation} 
In the following the notation is always assumed unless stated otherwise.

\subsection{One-particle equation} \label{sec:one-particle}
In this section we present an equation describing the behaviour of the occupation time for the branching system starting from a single particle. Subsequently we also acquire its analogue for the superprocess. These equations play key role in the rest of the proofs.
Recall  \eqref{eq:generating} and define $G(s) := F(1-s) - (1-s)$ 
\[
	G(s) = q s^2 + (1-2q) s.
\]
We denote the Laplace transform of the occupation time for the system starting off from a single particle at $x$
\begin{equation}
	v^B_{\Psi}\left(x,r,t\right):=\mathbb{E}\exp\left\{ \int_{0}^{t}\left\langle N_{s}^{x},\Psi\left(\cdot,r+s\right)\right\rangle \dd{s}\right\}-1, \:\: \Psi \in \mathcal{S}(\mathbb{R}^{d+1}) \label{def:v}, 
\end{equation}
where $\cbr{N^{x}_s}_{s\geq 0}$ denotes the empirical measure of the particle system with the initial condition $N_0^{x} = \delta_x$. $N^{x}$ is a system in which particles evolve according to the dynamics described in Introduction but without immigration.
\begin{lem}
	\label{lem:equation} Let $Q_0^B$ be given by \eqref{eq:q0b}. Assume that $\Psi < Q_0^B$ and assumptions (A1)-(A3) are fulfilled then 
	\begin{equation}
		v^B_{\Psi}\left(x,r,t\right)=\int_{0}^{t}\mathcal{T}^Q_{t-s}\left[\Psi\left(\cdot,r+t-s\right)(1+v^B_\Psi\left(\cdot,r+t-s,s\right)) + Vq  (v^B_{\Psi}\left(\cdot,r+t-s,s\right))^2  \right]\left(x\right)\dd{s}.\label{eq:v-integral} 
	\end{equation}
\end{lem}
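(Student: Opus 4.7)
The plan is to derive the integral equation by a first-jump decomposition followed by passage to an infinitesimal (PDE) form, and then use Duhamel's formula together with a characteristic-coordinate change to put it into the form stated in the lemma.

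First, I would condition on the lifetime $\tau$ of the initial ancestor (exponential with parameter $V$) and on the number of offspring $k$ produced at death (which has generating function $F$). Using the strong Markov property of $\eta$ and the branching property (distinct subtrees are independent copies of $N^{\bullet}$ started from the branching location), a standard renewal argument gives, for $w := 1 + v^B_\Psi$,
\begin{equation*}
w(x,r,t) = e^{-Vt}\mathbb{E}_x \exp\Bigl\{\int_0^t \Psi(\eta_s,r+s)\,\dd{s}\Bigr\} + V\int_0^t e^{-Vu}\mathbb{E}_x\Bigl[ e^{\int_0^u \Psi(\eta_s,r+s)\ds} F\bigl(w(\eta_u,r+u,t-u)\bigr)\Bigr]\,\dd{u},
\end{equation*}
with $w(x,r,0)=1$. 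Assumption (A1) justifies the Markov manipulations, while (A2)--(A3) ensure that the Feynman--Kac perturbation $\mathcal{T}^\Psi$ is a Feller semigroup on $\SD$, so that $w$ is a classical solution of a backward equation.

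Next, I would extract the PDE. Expanding the renewal identity for small $h$ and using $F(1+v)-(1+v)=(2q-1)v+qv^2$, hence $V(F(1+v)-(1+v))=-Qv+Vqv^2$, one obtains
\begin{equation*}
\partial_t v(x,r,t) - \partial_r v(x,r,t) = (A-Q)v(x,r,t) + Vq\, v^2(x,r,t) + \Psi(x,r)\bigl(1+v(x,r,t)\bigr),
\end{equation*}
with $v(\cdot,r,0)=0$. The condition $\Psi < Q_0^B$ provides the a priori bound needed to run Picard iteration and show that this PDE has a unique bounded solution on $[0,t]$; the value $Q_0^B$ arises as the critical value at which the naive contraction argument (treating $Vqv^2$ as a perturbation of the linear part with generator $A-Q$) breaks down, and its precise form can be recovered by comparing with the constant-coefficient ODE $\dot{y} = -Qy + Vqy^2 + \theta(1+y)$.

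Now I would remove the transport term by the method of characteristics. Setting $V(x,\tau,u) := v(x,\tau-u,u)$, the left-hand side becomes $\partial_u V$ and the PDE reduces to
\begin{equation*}
\partial_u V(x,\tau,u) = (A-Q)V(x,\tau,u) + Vq\, V^2(x,\tau,u) + \Psi(x,\tau-u)\bigl(1+V(x,\tau,u)\bigr), \quad V(x,\tau,0)=0.
\end{equation*}
Since $A-Q$ generates $(\T{t}^Q)_{t\geq 0}$, Duhamel's formula gives
\begin{equation*}
V(x,\tau,u) = \int_0^u \T{u-s}^Q\!\left[\Psi(\cdot,\tau-s)(1+V(\cdot,\tau,s)) + Vq\, V^2(\cdot,\tau,s)\right]\!(x)\,\dd{s}.
\end{equation*}
Putting back $\tau=r+t$, $u=t$, and noting $V(\cdot,r+t,s)=v(\cdot,r+t-s,s)$, we obtain precisely \eqref{eq:v-integral}.

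The main obstacle, in my view, is the regularity needed to justify the PDE step rigorously: one must verify that $w(\cdot,r,t)\in D_A$ and that the quadratic term $Vq v^2$ together with the multiplicative term $\Psi(1+v)$ stay in the appropriate space throughout Picard iteration. This is where assumptions (A2) and (A3) are essential — (A2) guarantees that $\Psi(\cdot,r)\in D_A$, while (A3) provides a Feller semigroup for the linearised problem, from which one bootstraps regularity for the nonlinear equation by fixed-point iteration under the restriction $\Psi<Q_0^B$. Uniqueness of $v^B_\Psi$ as a solution then follows from Grönwall applied to the difference of two solutions, exploiting the same restriction to absorb the quadratic term.
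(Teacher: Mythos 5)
Your derivation is correct and follows exactly the route the paper delegates to \cite[Lemma~3.1]{Mios:2009oq}: first-jump (renewal) decomposition of $w=1+v^B_\Psi$, passage to the nonlinear backward PDE using $V\bigl(F(1+v)-(1+v)\bigr)=-Qv+Vqv^2$, straightening the transport by the change of variables $(x,r,t)\mapsto(x,\tau-u,u)$, and then Duhamel's formula for the semigroup generated by $A-Q$. Your remark on the role of $\Psi<Q_0^B$ is also the right diagnostic — it is precisely the threshold at which the scalar comparison ODE $\dot y=-Qy+Vqy^2+\theta(1+y)$ loses its bounded stationary solution, and this is the finiteness/uniqueness issue the paper says must be handled by the method of \cite[Lemma~3.2]{Hong:2005aa}.
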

The proof of this lemma is a rather obvious modification of the proof \cite[Lemma 3.1]{Mios:2009oq} though one needs to be careful as the equation may blow up for some $\Psi$. The proof of \cite[Lemma 3.2]{Hong:2005aa} can be modified to exclude this possibility and to show that $v^B$ admits a unique solution.\\
Now we move to the analogue for the superprocess. Let $v^S_{\Psi}$ be analogue of \eqref{def:v} for the superprocess i.e. this time by $N^x$ we understand the superprocess with the initial condition $\delta_x$ again without immigration. 
\begin{lem}
	\label{lem:equation-super} Let $Q_0^S$ be given by \eqref{eq:q0S}. Assume that  ${\Psi} < Q_0^S$ and assumptions (A1)-(A3) are fulfilled then 
	\begin{equation}	v^S_{\Psi}\left(x,r,t\right)=\int_{0}^{t}\mathcal{T}^Q_{t-s}\left[\Psi\left(\cdot,r+t-s\right)+Vq\left(v^S_{\Psi}\left(\cdot,r+t-s,s\right)\right)^2\right]\left(x\right)\dd{s}.\label{eq:w-integral-super} 
	\end{equation}
\end{lem}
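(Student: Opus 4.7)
The plan is to derive the equation by mimicking the strategy used for Lemma \ref{lem:equation}, following \cite[Lemma 3.2]{Hong:2005aa} but adapted to the continuous-mass setting. Two natural routes are available. The first is to invoke the log-Laplace characterisation of the superprocess from Section \ref{sec:approx} directly: fix $r,t$, apply the Markov property of $N^S$ at an intermediate time $t-s$ to split the exponent in
\[
	1 + v^S_\Psi(x,r,t) = \mathbb{E}_{\delta_x}\exp\rbr{\int_0^t \ddp{N_u}{\Psi(\cdot, r+u)}\, \du},
\]
and use the branching property to factor the conditional expectation; then recognise the remaining object as the log-Laplace functional of the superprocess with endpoint test function and time-shifted source, whose mild-form integral equation (for the Dawson--Watanabe superprocess with branching mechanism $Vq\, u^2$, written using the semigroup $\T{t}^Q$) yields exactly the claimed identity. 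The second, more concrete, route is to pass to the limit $n \to \infty$ in the particle approximation of Proposition \ref{prop:superprocess-convergence}: apply Lemma \ref{lem:equation} to the $n$-th BPS with parameters $V_n,q_n$, rescale $\Psi$ by the particle mass $1/n$ to match the superprocess Laplace functional, combine contributions from the $n$ lineages by independence, and take $n\to\infty$, using $V_n q_n \sim Vnq$ so that the quadratic term survives with the correct prefactor while the linear mixed term $\Psi(1+v)$ of the BPS equation vanishes in the limit. The disappearance of that mixed term reflects the continuous-mass nature of the superprocess: a single ``superparticle'' carries no mass and so contributes nothing linearly to the source.

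For existence, uniqueness and finiteness, I would use a Picard iteration. Define $v_0 \equiv 0$ and
\[
	v_{k+1}(x,r,t) := \int_0^t \T{t-s}^Q \sbr{\Psi(\cdot, r+t-s) + Vq\, v_k^2(\cdot, r+t-s, s)}(x)\, \ds.
\]
The iterates are monotone non-decreasing in $k$ and are uniformly majorised by the spatially constant solution of $Qu = \norm{\Psi}{\infty}{} + Vq\, u^2$, namely
\[
	u_\star := \frac{Q - \sqrt{Q^2 - 4Vq\norm{\Psi}{\infty}{}}}{2Vq},
\]
which is real and finite precisely when $\norm{\Psi}{\infty}{} < Q_0^S = Q^2/(4Vq)$; this is how the hypothesis on $\Psi$ enters. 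The monotone limit yields a solution, and uniqueness within the class of bounded solutions follows from a Gronwall-type argument exploiting the exponential decay built into $\T{t}^Q$. The main technical obstacle is controlling the quadratic nonlinearity near the critical threshold $Q_0^S$, where the naive contraction constant for the Picard map degenerates. Working throughout with the spatially constant majorant $u_\star$ (which stays bounded up to the threshold) sidesteps this issue and, as a bonus, supplies the uniform pointwise bounds on $v^S_\Psi$ that are needed in the subsequent parts of the paper.
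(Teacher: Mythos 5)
Your second route is exactly the paper's proof: one writes the BPS one-particle equation of Lemma \ref{lem:equation} for the $n$-th approximation (with $V_n,q_n$ chosen so that $Q_n=Q$), rescales by the particle mass via $h_{n,\Psi}=n\,v^B_{n,\Psi/n}$, and lets $n\to\infty$ using $V_nq_n/n\to Vq$ and the convergence of Proposition \ref{prop:superprocess-convergence} to get $h_{n,\Psi}\to v^S_\Psi$, while your Picard/monotone-iteration argument with the constant majorant (finite precisely below $Q_0^S=Q^2/(4Vq)$) matches how the paper treats well-posedness in Lemma \ref{lem:convergence-v2} and via the series representation borrowed from Hong--Li. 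One small wording correction: after rescaling only the cross term $\Psi\cdot\tfrac{1}{n}h_{n,\Psi}$ vanishes, not the whole $\Psi(1+v)$ term --- the linear source $\Psi$ survives, as indeed it must to produce \eqref{eq:w-integral-super}.
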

\begin{rem} With additional assumption that $\Psi\geq 0$ we can define (to indicate the difference we use $\bar{\:}\:$)
	\begin{equation*}
		\bar{v}^S_{\Psi}\left(x,r,t\right)=1-\mathbb{E}\exp\left\{ -\int_{0}^{t}\left\langle N_{s}^{x},\Psi\left(\cdot,r+s\right)\right\rangle \dd{s}\right\}, \Psi \in \mathcal{S}(\mathbb{R}^{d+1}), \Psi\geq0, 
	\end{equation*}
	which fulfils the equation
	\begin{equation}	\bar{v}^S_{\Psi}\left(x,r,t\right)=\int_{0}^{t}\mathcal{T}^Q_{t-s}\left[\Psi\left(\cdot,r+t-s\right)-Vq\left(\bar{v}^S_{\Psi}\left(\cdot,r+t-s,s\right)\right)^2\right]\left(x\right)\dd{s}. \label{eq:integral-super-simple} 
	\end{equation}
	This equation is much much simpler in analysis but can be used only in the proof of the CLT. In the proof of the deviation principles we will need \eqref{def:v}.
\end{rem}

\begin{proof}
 Recall that $F_n$ denotes the generating function of the branching law \eqref{eq:generating2}. The one particle equation from Lemma \ref{lem:equation} for the $n$-th approximation is
	\begin{equation*}
		v^B_{n,\Psi}\left(x,r,t\right)=\int_{0}^{t}\mathcal{T}_{t-s}^{Q_n} \left[\Psi\left(\cdot,r+t-s\right)\left(1+v^B_{n,\Psi}\left(\cdot,r+t-s,s\right)\right)+V_n q_n \left(v^B_{n,\Psi}\left(\cdot,r+t-s,s\right)\right)^2\right]\left(x\right)\dd{s},
	\end{equation*}
	where $Q_n:= V_n (1-2q_n)$. Denote now $h_{n,\Psi} = n v^B_{n, \frac{\Psi}{n}}$ (which reflects the fact that the $n$-th approximation consists of the particles of size $1/n$ and the initial number of particles is $n$ times greater). It is easy to check that $Q_n = Q$ hence
\begin{equation*}
	h_{n,\Psi}\left(x,r,t\right)=\int_{0}^{t}\mathcal{T}_{t-s}^Q \left[\Psi\left(\cdot,r+t-s\right)\left(1+\frac{1}{n} h_{n,\Psi}\left(\cdot,r+t-s,s\right)\right)+ \frac{V_n q_n}{n}  h_{n,\Psi}^2\left(\cdot,r+t-s,s\right)\right]\left(x\right)\dd{s}. 
\end{equation*}
We have $\frac{V_n q_n }{n} \rightarrow Vq $. By the convergence from Proposition \ref{prop:superprocess-convergence} (we use a different starting condition but it does not influence the convergence)
\begin{equation}
	h_{n,\Psi} \rightarrow v^S_{\Psi} \label{eq:v-n} 
\end{equation}
It is easy to check that $v^S_{\Psi}$ fulfils \eqref{eq:w-integral-super}.
\end{proof}
Equation \eqref{eq:w-integral-super} has a  useful series representation. Firstly, for $f,g \in \mathcal{B}(\mathbb{R}^{d+1})$ we define the convolution operator
\[
	(g*f)(x,t) = \intc{t} \T{t-s}^Q \sbr{g(\cdot,s) f(\cdot,s)}(x) \dd{s}.
\]
Let us fix $t_0>0$ and denote $v^S_\Psi(x,s) := v^S_\Psi(x,t_0-t,t)$. By \eqref{eq:w-integral-super} it fulfils 
\[	v^S_{\Psi}\left(x,t\right)=\int_{0}^{t}\mathcal{T}^Q_{t-s}\left[ \Psi(\cdot, t_0-s)  + Vq\left(v^S_{\Psi}\left(\cdot,s\right)\right)^2\right]\left(x\right)\dd{s}, \:\: t\in [0,t_0].
\]
Following the reasoning of \cite[Section 3]{Hong:2005aa} and assuming that $\norm{\Psi}{\infty}{} < Q_0^S$ it can be verified that
\begin{equation}
		v^S_{\Psi}(x,t) = (Vq)^{-1}\sum_{n=1}^{\infty } F^{*n}(x,t), \label{eq:rep}
\end{equation}
where $F^{*1}(x,t) := Vq \intc{t} \T{t-s}^Q \Psi(x,t_0-s) \dd{s}$ and
\[
		 F^{*n}(x,t) = \sum_{l=1}^{n-1}  (F^{*l}*F^{*(n-l)})(x,t), n\geq 2.
\]
The next lemma is an obvious modification of \cite[Lemma 3.1]{Hong:2005aa} 
\begin{lem}\label{lem:hong-infty}
	Assume that $\Psi \in \mathcal{B}(\Rd)_+$ then for $F^{*n}$ defined above we have
	\[
		\norm{F^{*n}}{\infty}{} \leq B_n Q^{1-2n} \norm{Vq \Psi}{\infty}{n},\:\: n\geq 1,
	\]
	\[
		\norm{F^{*n}}{\infty}{} \leq D_n Q^{1-2n} \norm{F^{*1}}{\infty}{n}, \:\: n \geq 1,
	\]
where $\cbr{B_n}_{n\geq 1}$ is the sequence defined by $B_1 = B_2 = 1$ and $B_n = \sum_{k=1}^{n-1} B_k B_{n-k}$ and the sequence  $\cbr{D_n}_{n\geq 1}$ is defined by $D_1 = Q, D_2 = Q^{-2}$ and $D_n = \sum_{k=1}^{n-1} D_k D_{n-k}$.
\end{lem}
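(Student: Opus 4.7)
The plan is to establish both bounds by induction on $n$, using as the sole analytic input the $L^\infty$-contractivity of the killed semigroup
\begin{equation*}
    \norm{\T{t}^Q f}{\infty}{} \leq e^{-Qt}\norm{f}{\infty}{}.
\end{equation*}
This is immediate from $\T{t}^Q = e^{-Qt}\T{t}$ together with the fact that any Markov semigroup is a contraction on $L^\infty$. Combined with the definition of the convolution $*$, it yields the workhorse estimate
\begin{equation*}
    \norm{(g*f)(\cdot,t)}{\infty}{} \leq \int_0^t e^{-Q(t-s)}\norm{g(\cdot,s)}{\infty}{}\norm{f(\cdot,s)}{\infty}{}\ds,
\end{equation*}
which will be iterated along the recursive definition of $F^{*n}$.

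For the first bound, the base case $n=1$ follows from applying the above to $F^{*1}(x,t)=Vq\int_0^t \T{t-s}^Q \Psi(x,t_0-s)\ds$, giving $\norm{F^{*1}(\cdot,t)}{\infty}{}\leq Q^{-1}\norm{Vq\Psi}{\infty}{}$, which matches $B_1 Q^{-1}\norm{Vq\Psi}{\infty}{}$ since $B_1=1$. For the inductive step, assume the bound for every index less than $n$. Applying the workhorse estimate to each summand of $F^{*n}=\sum_{l=1}^{n-1} F^{*l}*F^{*(n-l)}$ and substituting the inductive hypotheses gives
\begin{equation*}
    \norm{F^{*n}(\cdot,t)}{\infty}{} \leq \sum_{l=1}^{n-1} B_l B_{n-l}\, Q^{2-2n}\norm{Vq\Psi}{\infty}{n}\int_0^t e^{-Q(t-s)}\ds \leq B_n Q^{1-2n}\norm{Vq\Psi}{\infty}{n},
\end{equation*}
where the $Q^{-1}$ from the time integral combines with $Q^{2-2n}$ to produce $Q^{1-2n}$ and $\sum_l B_l B_{n-l}=B_n$ by the very definition of the recursion.

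The second bound proceeds by the same induction, but phrased in terms of $\norm{F^{*1}}{\infty}{}$ rather than $\norm{Vq\Psi}{\infty}{}$. The cases $n=1$ and $n=2$ are checked by direct computation and are what dictate the initial values $D_1$ and $D_2$; from $n\geq 3$ onward the recursion $D_n=\sum_{k=1}^{n-1}D_k D_{n-k}$ closes the induction by exactly the bookkeeping above, each additional convolution contributing one factor $Q^{-1}$ from $\int_0^t e^{-Q(t-s)}\ds$ on top of the product of two previous $Q$-powers. I do not foresee a genuine obstacle: the lemma is essentially combinatorial once the semigroup contraction is in hand, and the only point requiring care is tracking the exponents of $Q$ so that the recursively defined constants $B_n$ and $D_n$ emerge with precisely the stated powers.
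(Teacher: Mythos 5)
Your strategy is the natural one and is in fact all the paper itself offers (it gives no proof, deferring to Hong's Lemma 3.1): use the contraction $\norm{\T{t}^Q f}{\infty}{}\leq e^{-Qt}\norm{f}{\infty}{}$, deduce $\norm{(g*f)(\cdot,t)}{\infty}{}\leq \intc{t} e^{-Q(t-s)}\norm{g(\cdot,s)}{\infty}{}\norm{f(\cdot,s)}{\infty}{}\dd{s}$, and run the induction through the convolution recursion, each step paying one factor $Q^{-1}$ from $\intc{t}e^{-Q(t-s)}\dd{s}\leq Q^{-1}$. For the first inequality this is complete and correct (note $B_2=B_1B_1=1$, so even $n=2$ is covered by the recursion), and the inductive step for the second inequality from $n\geq 3$ on is likewise fine.

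The gap is exactly at the point you wave through as ``checked by direct computation'', namely $n=2$ of the second bound. The computation gives $\norm{F^{*2}}{\infty}{}\leq Q^{-1}\norm{F^{*1}}{\infty}{2}$, i.e. it would dictate $D_2=Q^{2}$ (which is also what the recursion produces from $D_1=Q$, since $D_1D_1=Q^2$) — not the value $D_2=Q^{-2}$ appearing in the statement. With $D_2=Q^{-2}$ the asserted $n=2$ inequality reads $\norm{F^{*2}}{\infty}{}\leq Q^{-5}\norm{F^{*1}}{\infty}{2}$, which your argument does not yield and which is actually false when $Q>1$: for a trivial motion and $\Psi\equiv c$ constant one has $F^{*1}(t)=Vq\,c\,(1-e^{-Qt})/Q$ and $\norm{F^{*2}}{\infty}{}$ of order $Q^{-1}\norm{F^{*1}}{\infty}{2}$ for $t_0$ large. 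So either you should read the statement with $D_2=Q^{2}$ (most plausibly a typo inherited from the citation; with that value your induction closes verbatim, and the only property used later — existence of $A(Q)$ with $A^{-n}D_n\to 0$ — is insensitive to the change), or the claim as printed requires the extra restriction $Q\leq 1$. As written, the assertion that the $n=1,2$ computations ``dictate the initial values $D_1$ and $D_2$'' is precisely where your proof fails to match the stated lemma, and it needs to be made explicit.
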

It is easy to show \cite[proof of Lemma 3.2]{Hong:2005aa} that $ 5^{-n} B_n \rightarrow 0$ analogously there exists $A:=A(Q)>0$ such that $A^{-n} D_n \rightarrow 0$. Using the representation \eqref{eq:rep} we prove 
\begin{lem}\label{lem:helper}
	Assume that (A7) holds. For any $\varphi \in \SD_+$ there exist $C_1, C_2$ such that for any c\'al\'ag $\chi:\R \rightarrow \R_+$ such that $\norm{\chi}{\infty}{}<C_1$ and any $t_0>0$ we have
	\begin{equation*}
		\intc{t_0} \norm{v^S_{\Psi}(\cdot,t_0-s,s)}{1}{} \dd{s} \leq C_2 \norm{\varphi}{1}{} \intc{t_0} \chi(s) \dd{s},\quad \Psi(x,t):=\varphi(x)\chi(t). 
	\end{equation*}
\end{lem}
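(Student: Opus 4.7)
The plan is to rely on the series representation \eqref{eq:rep} and control each term $F^{*n}$ in $L^1$ by combining the exponential $L^1$--decay of $\T{t}^Q$ from (A7) with the uniform $L^\infty$ bounds from Lemma~\ref{lem:hong-infty}. Writing $M := \norm{\varphi}{1}{} \intc{t_0} \chi(u) \du$, the goal is to show $J_n := \intc{t_0} \norm{F^{*n}(\cdot,t)}{1}{} \dt \leq c \cdot r^n M$ for some geometric rate $r < 1$ (depending on $C_1$), after which summing over $n$ and multiplying by $(Vq)^{-1}$ gives the claim.

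First I would handle the base case. Since $F^{*1}(x,t) = Vq \intc{t} \chi(t_0 - s)\, \T{t-s}^Q \varphi(x) \, \ds$, assumption (A7) together with Fubini yields
\[
J_1 \;\leq\; Vq \, C \intc{t_0} \chi(t_0-s) \int_s^{t_0} e^{-Q'(t-s)} \dt \, \ds \;\leq\; \frac{VqC}{Q'} \, M.
\]
For the induction step I split $F^{*n} = \sum_{l=1}^{n-1} F^{*l} * F^{*(n-l)}$ and estimate the Hölder-type product $\norm{F^{*l}(\cdot,s) F^{*(n-l)}(\cdot,s)}{1}{} \leq \norm{F^{*l}(\cdot,s)}{\infty}{}\, \norm{F^{*(n-l)}(\cdot,s)}{1}{}$ inside the convolution, so that (A7) and another Fubini give the recursion
\[
J_n \;\leq\; \frac{CQ}{Q'} \sum_{l=1}^{n-1} B_l \, \beta^l \, J_{n-l}, \qquad \beta := \frac{Vq \norm{\varphi}{\infty}{} \norm{\chi}{\infty}{}}{Q^2},
\]
where the factor $B_l Q^{1-2l}(Vq\norm{\varphi}{\infty}{}\norm{\chi}{\infty}{})^l$ comes from Lemma~\ref{lem:hong-infty}.

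Next, using $B_l \leq c_0 \cdot 5^l$ (recall $5^{-n}B_n \to 0$) and summing the recursion, the generating-function (or equivalently direct convolution) computation gives
\[
\sum_{n \geq 1} J_n \;\leq\; \frac{J_1}{1 - \tfrac{CQ c_0}{Q'} \cdot \tfrac{5\beta}{1-5\beta}},
\]
which is finite provided $5\beta$ is strictly less than $Q'/(CQc_0 + Q')$. Translating this constraint back, it suffices to pick $C_1$ so that $\norm{\chi}{\infty}{} < C_1$ enforces the above smallness on $\beta$; then $\sum_n J_n \leq C_2 \cdot Vq \cdot M$ for some $C_2 = C_2(\varphi, Q, Q', C, V, q)$. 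Combining with \eqref{eq:rep} finishes the proof.

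The main obstacle is the recursion: the $L^\infty$ estimate of Lemma~\ref{lem:hong-infty} grows like $5^n$ in $n$, so the smallness of $\norm{\chi}{\infty}{}$ is the only lever available to beat this growth when summing, and one must choose $C_1$ precisely small enough that the ratio of the geometric series stays below one. A secondary technical point is that (A7) is an $L^1$ bound, so the splitting of $F^{*l} \cdot F^{*(n-l)}$ into the $L^\infty$/$L^1$ factors must be done inside $\T{t-s}^Q$, and one must ensure the products and bounds remain measurable/finite; the proof of Lemma~\ref{lem:helper} essentially reduces to this careful bookkeeping.
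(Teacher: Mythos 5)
Your proof is correct and follows essentially the same route as the paper: both use the series representation \eqref{eq:rep}, bound $F^{*1}$ directly via (A7), and combine Lemma~\ref{lem:hong-infty} (for the $L^\infty$ factor) with (A7) (for the $L^1$ factor) inside the convolution to obtain a recursion for $J_n := \intc{t_0}\norm{F^{*n}(\cdot,t)}{1}{}\dt$ that is controlled once $\norm{\chi}{\infty}{}$ is small enough to beat the $B_n \sim 5^n$ growth. Your direct resummation $\sum_n J_n$ via the geometric bound is a clean alternative to the paper's explicit inductive bound on each $J_n$ (which, as written, contains an apparent sign typo, $a^{-n}$ versus $a^n$); the two are functionally equivalent, and the smallness constraint you derive on $5\beta$ plays exactly the role of the paper's choice of a sufficiently small $C_1$.
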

\begin{proof}\label{pf:} Fix $a\in(0,1)$, by the representation \eqref{eq:rep} the proof will be concluded once we show that
	\begin{equation}
		\intc{t_0} \norm{F^{*n}(\cdot,s)}{1}{} \dd{s} \leq a^{-n} \norm{\varphi}{1}{} \intc{t_0} \chi(s) \dd{s}, \:\: n\geq 1. \label{eq:tmp16}
	\end{equation}
	Using the triangle and generalised Minkowski inequality we check that 
	\[
		\intc{t_0}\norm{F^{*n}(\cdot,s)}{1}{} \dd{s} \leq \sum_{k=1}^{n-1} 	\intc{t_0} \intc{t} \norm{\T{t-s}^Q \sbr{ F(\cdot,s)^{*k} F(\cdot,s)^{*(n-k)}}}{1}{}  \dd{s} \dd{t}.
	\]
	For $n\geq 3$ by (A7) and Lemma \ref{lem:hong-infty} we get
	\[
		\intc{t_0}\norm{F^{*n}(\cdot,s)}{1}{} \dd{s} \leq  2C \sum_{k=1}^{\lceil n/2 \rceil} B_{n-k} Q^{1-2(n-k)} \norm{Vq \Psi}{\infty}{n-k} \intc{t_0} \intc{t} e^{-Q'(t-s)} \norm{\sbr{ F(\cdot,s)^{*k}}}{1}{} \dd{s}  \dd{t}.
	\]
	Changing the order of integration we get
	\[
		\intc{t_0}\norm{F^{*n}(\cdot,s)}{1}{} \dd{s} \leq \frac{2C}{Q'} \sum_{k=1}^{\lceil n/2 \rceil} B_{n-k} Q^{1-2(n-k)} \norm{Vq \Psi}{\infty}{n-k} \intc{t_0}   \norm{\sbr{ F(\cdot,s)^{*k}}}{1}{} \dd{s}.
	\]
	Assume now that we already know that \eqref{eq:tmp16} is true for $k<n$ then
	\[
		\intc{t_0}\norm{F^{*n}(\cdot,s)}{1}{} \dd{s} \leq \underbrace{\frac{2C}{Q'} \sum_{k=1}^{\lceil n/2 \rceil} B_{n-k} Q^{1-2(n-k)} (C_1 Vq)^{n-k} \norm{\varphi}{\infty}{n-k} a^{-k}}_{w} \rbr{\norm{\varphi}{1}{} \intc{t_0} \chi(s) \dd{s}}.
	\]
	Using the fact that $B_n 5^{-n}\rightarrow 0$ we can find $C_1$ small enough to have $w<a^{-n}$. The cases $n=1,2$ can be checked directly, finally by appealing to the induction we finish the proof.
%
%
%
%
%
%
%
\end{proof}
Let us also define 
\begin{equation}
	\tilde{v}_{\Psi}(x,r,t) = \intc{t} \T{t-s}^Q \Psi(\cdot,r+t-s) \dd{s}, \quad \Psi \in \mathcal{S}(\mathbb{R}^{d+1} ). \label{sol:tv} 
\end{equation}
\begin{equation}
	u^B_{\Psi} := v^B_{\Psi} -\tilde{v}_{\Psi}, \quad u^S_{\Psi} := v^S_{\Psi} -\tilde{v}_{\Psi}. \label{eq:def-u} 
\end{equation}
We have 
\begin{lem}\label{lem:differentus} 
	$u^B_\Psi, u^S_\Psi$ satisfy the equations
	\begin{equation}
		u^B_{\Psi} (x,r,t) = \intc{t} \T{t-s}^Q \sbr{\Psi(\cdot, r+t-s)v_\Psi^B(\cdot, r+t-s,s) + Vq (v^B_{\Psi}(\cdot, r+t-s,s))^2}(x) \dd{s}. \label{eq:delta-v} 
	\end{equation}
	\begin{equation}
		u^S_{\Psi} (x,r,t) = \intc{t} \T{t-s}^Q \sbr{Vq  (v_{\Psi}^S(\cdot, r+t-s,s))^2}(x) \dd{s}. \label{eq:delta-v-branching} 
	\end{equation}
\end{lem}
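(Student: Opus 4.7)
The proposal is very short because the lemma is essentially a book-keeping identity that follows immediately from the integral equations already established in Lemma \ref{lem:equation} and Lemma \ref{lem:equation-super} together with the definitions \eqref{sol:tv} and \eqref{eq:def-u}. The plan is to split the right-hand side of each one-particle equation into its linear part (which is exactly $\tilde v_\Psi$) and its remainder (which will be the claimed expression for $u$), and then subtract.

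For the BPS case, I would start from \eqref{eq:v-integral} and observe that the integrand decomposes additively as
\[
\Psi(\cdot,r+t-s)\bigl(1+v^B_\Psi(\cdot,r+t-s,s)\bigr)+Vq\bigl(v^B_\Psi(\cdot,r+t-s,s)\bigr)^2
=\Psi(\cdot,r+t-s)+R^B(\cdot,s),
\]
where $R^B(\cdot,s)=\Psi(\cdot,r+t-s)v^B_\Psi(\cdot,r+t-s,s)+Vq(v^B_\Psi(\cdot,r+t-s,s))^2$. Since $\T{t-s}^Q$ is linear, applying it termwise and integrating in $s$ yields $v^B_\Psi(x,r,t)=\tilde v_\Psi(x,r,t)+\int_0^t\T{t-s}^Q R^B(x,s)\dd{s}$. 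Subtracting $\tilde v_\Psi$ from both sides gives \eqref{eq:delta-v} by the definition $u^B_\Psi:=v^B_\Psi-\tilde v_\Psi$ in \eqref{eq:def-u}.

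The superprocess case is even cleaner: the integrand on the right-hand side of \eqref{eq:w-integral-super} already splits as $\Psi(\cdot,r+t-s)+Vq\bigl(v^S_\Psi(\cdot,r+t-s,s)\bigr)^2$, so the same linearity argument applied to \eqref{eq:w-integral-super} gives $v^S_\Psi(x,r,t)=\tilde v_\Psi(x,r,t)+\int_0^t\T{t-s}^Q[Vq(v^S_\Psi(\cdot,r+t-s,s))^2](x)\dd{s}$, which is \eqref{eq:delta-v-branching} after subtracting $\tilde v_\Psi$.

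There is no real obstacle. The only point that needs a word of justification is that each of the two integrals in the decomposition is individually well-defined and finite, so that the subtraction is legitimate. This follows from the hypotheses $\Psi<Q_0^B$ (resp. $\Psi<Q_0^S$) under which Lemma \ref{lem:equation} (resp. Lemma \ref{lem:equation-super}) provides a bounded solution $v^B_\Psi$ (resp. $v^S_\Psi$), together with $\Psi\in\mathcal S(\mathbb R^{d+1})$ which ensures that $\tilde v_\Psi$ in \eqref{sol:tv} is finite; Fubini then allows the split without any interchange issue.
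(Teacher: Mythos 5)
Your proof is correct and is essentially the argument the paper has in mind: the paper simply refers to \cite[Lemma 3.2]{Mios:2009oq}, which rests on the same splitting of the integrand in \eqref{eq:v-integral} (resp.\ \eqref{eq:w-integral-super}) into the linear part $\tilde v_\Psi$ of \eqref{sol:tv} and the nonlinear remainder, followed by subtraction as in \eqref{eq:def-u}. Your added remark on finiteness of the two pieces (boundedness of $v^B_\Psi$, $v^S_\Psi$ under $\Psi<Q_0^B$, $\Psi<Q_0^S$ and $\Psi\in\mathcal{S}(\mathbb{R}^{d+1})$) is the only point needing care, and you handle it adequately.
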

For proof see \cite[Lemma 3.2]{Mios:2009oq}.
We will now present two lemmas for estimation of $v^B_{\Psi}$. They will have a common proof. 
\begin{lem} \label{lem:norm-infty-v1} There exists $C_1,C_2>0$ such that for any $\Psi \in \mathcal{B}(\mathbb{R}^{d+1})_+$ such that $\norm{\Psi}{\infty}{}<C_1$ there is
	\[
		\norm{v^B_{\Psi}}{\infty}{} \leq C_2 \norm{\Psi}{\infty}{}.
	\]
\end{lem}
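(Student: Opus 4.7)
The plan is to run a direct Banach fixed-point argument on the integral equation \eqref{eq:v-integral} in the sup norm. Since the underlying Markov semigroup $(\T{t})_{t\geq 0}$ is a contraction on $L^\infty$, the damped semigroup $\T{t}^Q = e^{-Qt}\T{t}$ satisfies $\norm{\T{t}^Q f}{\infty}{} \leq e^{-Qt}\norm{f}{\infty}{}$. Denote by $\Phi$ the map on the space of bounded measurable functions on $\Rd \times \R_+ \times \R_+$ appearing on the right-hand side of \eqref{eq:v-integral}:
\begin{equation*}
\Phi(u)(x,r,t) := \intc{t} \T{t-s}^Q \sbr{\Psi(\cdot,r+t-s)(1+u(\cdot,r+t-s,s)) + Vq\, u^2(\cdot,r+t-s,s)}(x)\dd{s}.
\end{equation*}

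Write $m := \norm{\Psi}{\infty}{}$ and set $R := 2m/Q$. For $u$ in the closed ball $B_R := \cbr{u : \norm{u}{\infty}{} \leq R}$, integrating the contraction estimate for $\T{t}^Q$ against $\int_0^{+\infty} e^{-Qs}\dd{s} = 1/Q$ yields
\begin{equation*}
\norm{\Phi(u)}{\infty}{} \leq \frac{m + mR + Vq R^2}{Q} = \frac{m}{Q}\rbr{1 + \frac{2m}{Q} + \frac{4Vq\,m}{Q^2}},
\end{equation*}
which is at most $R$ as soon as $2m/Q + 4Vq\,m/Q^2 \leq 1$. Similarly, using $|u_1^2 - u_2^2| \leq 2R\,|u_1-u_2|$ on $B_R$, I would obtain
\begin{equation*}
\norm{\Phi(u_1) - \Phi(u_2)}{\infty}{} \leq \frac{m + 2VqR}{Q}\norm{u_1-u_2}{\infty}{} = \frac{m}{Q}\rbr{1 + \frac{4Vq}{Q}}\norm{u_1-u_2}{\infty}{},
\end{equation*}
so $\Phi$ is a strict contraction once $m$ is small. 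Choosing $C_1 > 0$ small enough to secure both inequalities and also $C_1 < Q_0^B$ (so that Lemma \ref{lem:equation} is in force), Banach's theorem produces a unique fixed point $u^\star$ of $\Phi$ in $B_R$.

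To conclude, one identifies $u^\star$ with $v^B_\Psi$: both solve \eqref{eq:v-integral} and lie in $L^\infty$, and the uniqueness assertion recorded right after Lemma \ref{lem:equation} forces them to coincide. This gives $\norm{v^B_\Psi}{\infty}{} \leq R = (2/Q)\,\norm{\Psi}{\infty}{}$, establishing the claim with $C_2 := 2/Q$. The only point that is a priori delicate is this last identification, since Banach's theorem only produces uniqueness inside $B_R$; however the uniqueness statement from Lemma \ref{lem:equation} is global among bounded solutions, and $v^B_\Psi$ is itself bounded whenever $\norm{\Psi}{\infty}{} < Q_0^B$, so no ambiguity remains. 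An alternative route — which I would flag as a back-up if the global uniqueness reference were insufficiently explicit — is to take sups directly in \eqref{eq:v-integral} to derive the scalar inequality $Vq\,M^2 - (Q-m)M + m \geq 0$ for $M := \norm{v^B_\Psi}{\infty}{}$, and then rule out the upper branch of the quadratic by a continuity argument along the one-parameter family $\Psi \rightsquigarrow \lambda\Psi$, $\lambda \in [0,1]$, starting from $v^B_0 \equiv 0$.
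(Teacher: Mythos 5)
Your argument is correct, but it is a genuinely different route from the paper's. The paper does not set up a fixed-point scheme at all: it majorizes $v^B_\Psi(x,T-t,t)$ by the solution $L(t)$ of the spatially homogeneous scalar equation $L'(t)=-QL(t)+\norm{\Psi}{\infty}{}(1+L(t))+Vq\,L(t)^2$, $L(0)=0$, then compares this Riccati-type ODE with the linear ODE $K'(t)=-(Q/2)K(t)+\norm{\Psi}{\infty}{}(1+K(t))$ (valid as long as $K\leq Q/(2Vq)$, so that the quadratic term is absorbed into half the decay rate), and solves the linear equation explicitly to get $K\leq 4\norm{\Psi}{\infty}{}/Q$ once $\norm{\Psi}{\infty}{}\leq Q/4$. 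What the paper's route buys is that the same two-line ODE computation simultaneously proves Lemma \ref{lem:norm-infty-v2}, where the smallness parameter is $\norm{\varphi}{\infty}{}\delta$ rather than $\norm{\Psi}{\infty}{}$; your sup-norm contraction with $R=2\norm{\Psi}{\infty}{}/Q$ would not cover that case, since there $\norm{\Psi}{\infty}{}$ itself may be large. What your route buys is that it avoids the (slightly informal) pointwise comparison $v\leq L$ and the ``as long as'' bootstrap, replacing them by explicit ball-invariance and contraction estimates, at the price of the identification step $u^\star=v^B_\Psi$. That step is sound as you state it: $v^B_\Psi$ is uniformly bounded for $\norm{\Psi}{\infty}{}<Q_0^B$ (dominate by the total-mass branching process, which is the content of the cited result of Hong--Li), and uniqueness among bounded solutions of \eqref{eq:v-integral} with the zero value at $t=0$ follows from a routine Gronwall estimate on $D(t):=\sup_{x,r}|v_1-v_2|(x,r,t)$ even if the paper's remark after Lemma \ref{lem:equation} does not spell out the uniqueness class, so no real gap remains. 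One small warning about your fallback: the map $\lambda\mapsto\norm{v^B_{\lambda\Psi}}{\infty}{}$ is a supremum of functions continuous in $\lambda$ over a non-compact index set, hence only lower semicontinuous (left-continuous by monotonicity), so ruling out an upward jump across the gap between the two roots of $Vq\,M^2-(Q-\norm{\Psi}{\infty}{})M+\norm{\Psi}{\infty}{}=0$ needs an extra argument (e.g.\ a uniform-in-$(x,r)$ modulus of continuity in $t$ extracted from the integral equation, and then running the connectedness argument in $t$); as written, that back-up is the weakest link, but your main line does not rely on it.
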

As a consequence we get that for fixed $\Psi\geq 0$ there are $C$ and $\theta_0$ such that 
\begin{equation}
	\norm{u^B_{\theta \Psi}}{\infty}{} \leq C \theta^2, \quad 0\leq \theta \leq \theta_0. \label{eq:u-estimate}
\end{equation}
Let now $\Psi$ be of the form $\Psi(x,t) =\varphi(t) 1_{[s,s+\delta]}(t)$ for some $s,\delta>0$. 
\begin{lem} \label{lem:norm-infty-v2} There exists $C_1,C_2>0$ such that for any $\varphi \in \mathcal{B}(\mathbb{R}^{d})_+$ such that $\norm{\varphi}{\infty}{}\delta<C_1$ there is
	\[
		\norm{v^B_{\Psi}}{\infty}{} \leq C_2 \norm{\varphi}{\infty}{}\delta.
	\]
\end{lem}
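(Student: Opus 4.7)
The plan is to read \eqref{eq:v-integral} as a fixed-point equation for $v^B_\Psi$ and to close a Picard iteration inside the $L^\infty$-ball of radius $C_2\norm{\varphi}{\infty}{}\delta$. The key structural observation is that because $\Psi(x,t)=\varphi(x)1_{[s,s+\delta]}(t)$ is concentrated on a time window of length $\delta$, the linear source term is already of order $\delta$:
\[
\sup_{x,r,t}\intc{t} \T{t-s'}^Q\sbr{\Psi(\cdot,r+t-s')}(x)\,\dd{s'} \leq \norm{\varphi}{\infty}{}\intc{t} e^{-Q(t-s')}1_{[s,s+\delta]}(r+t-s')\,\dd{s'} \leq \norm{\varphi}{\infty}{}\delta,
\]
using the $L^\infty$-contractivity $\norm{\T{t}^Q f}{\infty}{}\leq e^{-Qt}\norm{f}{\infty}{}$, inherited from positivity of the Markov semigroup together with the explicit factor $e^{-Qt}$. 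The same $\delta$-localisation applies to the mixed term $\intc{t}\T{t-s'}^Q\sbr{\Psi\cdot v}\,\dd{s'}$ but not to the purely quadratic piece $Vq\intc{t}\T{t-s'}^Q\sbr{v^2}\,\dd{s'}$.

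Define $\mathcal{K}[v]$ to be the right-hand side of \eqref{eq:v-integral} and iterate $v_0\equiv 0$, $v_{n+1}=\mathcal{K}[v_n]$. Under the inductive hypothesis $\norm{v_n}{\infty}{}\leq C_2\norm{\varphi}{\infty}{}\delta$, the three estimates above yield
\[
\norm{v_{n+1}}{\infty}{} \leq \norm{\varphi}{\infty}{}\delta\rbr{1 + C_2\norm{\varphi}{\infty}{}\delta + \tfrac{Vq C_2^2}{Q}\norm{\varphi}{\infty}{}\delta},
\]
so the bound closes with, say, $C_2=2$ provided $\norm{\varphi}{\infty}{}\delta < C_1 := 1/(2 + 4Vq/Q)$. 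Because $\mathcal{K}$ is monotone on nonnegative inputs and $v_0\equiv 0 \leq v_1$, the sequence $(v_n)$ is nondecreasing and uniformly bounded, hence converges pointwise to a nonnegative $v^\ast$ that solves \eqref{eq:v-integral} by monotone convergence and still obeys $\norm{v^\ast}{\infty}{} \leq C_2\norm{\varphi}{\infty}{}\delta$.

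To identify $v^\ast$ with $v^B_\Psi$, I would verify that $\mathcal{K}$ is a strict contraction on $\{v:\norm{v}{\infty}{}\leq C_2\norm{\varphi}{\infty}{}\delta\}$: a direct computation gives Lipschitz constant at most $\norm{\varphi}{\infty}{}\delta(1 + 4Vq/Q)<1$, so the fixed point is unique in that ball and must coincide with the $v^B_\Psi$ supplied by Lemma \ref{lem:equation}. Working in this small ball also lets us bypass the global condition $\norm{\Psi}{\infty}{}<Q_0^B$, which may fail when $\norm{\varphi}{\infty}{}$ is large but $\delta$ is tiny. The main obstacle is precisely the quadratic term $Vq\intc{t}\T{t-s'}^Q\sbr{v^2}\,\dd{s'}$, which loses the $\delta$-localisation and sees only $\norm{v}{\infty}{}$. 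The resolution is the bootstrap built into the iteration: once $v_n = O(\norm{\varphi}{\infty}{}\delta)$, its square is $O(\norm{\varphi}{\infty}{}^2\delta^2)$ and is absorbed into the linear $O(\norm{\varphi}{\infty}{}\delta)$ source precisely because $\norm{\varphi}{\infty}{}\delta$ is small.
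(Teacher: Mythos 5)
Your proof is correct, and it takes a genuinely different route from the paper's. The paper sup's out the space variable to reduce the integral equation to a scalar one, rewrites it as an ODE
\[
L'(t) = -Q L(t) + \|\Psi(\cdot,T-t)\|_\infty(1+L(t)) + Vq\,L(t)^2, \quad L(0)=0,
\]
dominates it by the linear ODE $K'(t) = -(Q/2)K(t) + \|\Psi(\cdot,T-t)\|_\infty(1+K(t))$ by absorbing the quadratic term into half the decay rate (valid on the bootstrap region $K\leq Q/(2Vq)$), solves the linear ODE explicitly to get $K(t)\leq e^{\|\varphi\|_\infty\delta}-1$, and closes by choosing $C_1$ so that $e^{C_1}-1\leq Q/(2Vq)$. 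You instead run a Picard/monotone iteration directly on the integral equation, inductively propagating the bound $\|v_n\|_\infty\leq C_2\|\varphi\|_\infty\delta$ by estimating the three terms separately, and then identify the limit via contraction. Both proofs hinge on the same structural point — the linear source and the mixed term carry a factor $\delta$ from the support of $1_{[s,s+\delta]}$, while the purely quadratic term loses this localisation but gains two copies of $\|v\|_\infty$, each small under the bootstrap — but the mechanisms are different: explicit ODE comparison versus iteration and term-by-term estimation. Your version has two mild advantages: it never needs the ODE reformulation, and it flags explicitly the issue that $\|\Psi\|_\infty=\|\varphi\|_\infty$ may exceed $Q_0^B$ even when $\|\varphi\|_\infty\delta$ is small, so that Lemma \ref{lem:equation} does not directly supply the integral equation for $v^B_\Psi$. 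To fully close that identification you would want to argue, say, that your iterates $v_n$ coincide with the Laplace functionals over $n$-generation truncations of the branching tree and therefore increase to $v^B_\Psi$, but the paper's own proof glosses over the same point, so this is not a shortfall relative to what the paper proves.
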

\begin{proof} We notice that it is enough to show the claim for $v(x,t) := v_\Psi^B(x,T-t,t)$, for any $T>0$ (with constants independent of $T$). Moreover it is upper-bounded by $L$, being the solution of
	\begin{equation*}
		L(x,t)=\int_{0}^{t}\mathcal{T}^Q_{t-s}\left[\norm{\Psi\left(\cdot,T-s\right)}{\infty}{} (1+L\left(\cdot,s\right)) + Vq  (L\left(\cdot,s\right))^2  \right]\left(x\right)\dd{s}.
	\end{equation*}
	Obviously the space parameter is now superfluous, skipping it we obtain yet simpler equations 
	\begin{equation*}
		L(t)=\int_{0}^{t}e^{-Q(t-s)} \left[\norm{\Psi\left(\cdot,T-s\right)}{\infty}{} (1+L\left(s\right)) + Vq  (L\left(s\right))^2  \right]\dd{s},
	\end{equation*}
	which in a differential form writes as 
	\[
		L'(t) = -Q L(t) + \norm{\Psi\left(\cdot,T-t\right)}{\infty}{} (1+L(t)) + Vq  (L(t))^2,\quad L(0)=0. 
	\]
	It is upper-bounded by the solution of 
	\[
		K'(t) = -(Q/2) K(t)+ \norm{\Psi\left(\cdot,T-t\right)}{\infty}{} (1+K(t)) ,\quad K(0)=0,
	\]
	as long as $\norm{K}{\infty}{} \leq \frac{Q}{2Vq}$. One checks that once we assume $\norm{\Psi}{\infty}{}\leq Q/4$ we have
	\[
		K(t)\leq\frac{\norm{\Psi}{\infty}{}}{\norm{\Psi}{\infty}{}-Q/2}\rbr{e^{(\norm{\Psi}{\infty}{}-Q/2)t}-1}\leq\frac{4\norm{\Psi}{\infty}{}}{Q}.
	\]
	Now one can easily choose $C_1$ such that Lemma \ref{lem:norm-infty-v1} holds. To see Lemma \ref{lem:norm-infty-v2} we notice that without loss of generality we may assume that 
	\[
		K'(t) = -(Q/2) K(t)+ \norm{\varphi}{\infty}{}1_{[0,\delta]}(t) (1+K(t)) ,\quad K(0)=0.
	\]
	The sup is attained for $K(\delta)$ hence we have
	\[
		K(t)\leq K(\delta) = \frac{\norm{\varphi}{\infty}{}}{\norm{\varphi}{\infty}{}-Q/2}\rbr{e^{(\norm{\varphi}{\infty}{}-Q/2)\delta}-1}\leq e^{\norm{\varphi}{\infty}{}\delta}-1.
	\]
	The final step is to choose $C_1$ small enough to have $e^{C_1}-1\leq \frac{Q}{2Vq}$.
\end{proof}

Due to the representation presented above $v^S_\Psi$ is easier to handle. It is useful to know that $v^B_\Psi$ is comparable with $v^S_\Psi$. This property allows to convert some proofs for the superprocess into proofs for the BPS semi-automatically. 
\begin{lem} \label{lem:comparison} There exists $C>0$ such that for any $\Psi \in \mathcal{B}(\mathbb{R}^{d+1})_+$ which fulfils the assumptions of Lemma \ref{lem:norm-infty-v1} or Lemma \ref{lem:norm-infty-v2} we have
	\begin{equation}
		v^B_{C\Psi}(x,r,t) \leq v^S_{\Psi}(x,r,t) \leq  v^B_{\Psi}(x,r,t).\label{eq:comparison}
	\end{equation}
\end{lem}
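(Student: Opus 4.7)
The plan is to deduce both inequalities from the integral equations \eqref{eq:v-integral} and \eqref{eq:w-integral-super} by a monotone Picard iteration combined with a pointwise comparison of the nonlinearities. Since $\Psi\geq 0$ and the probabilistic representations force $v^B_\Psi, v^S_\Psi \geq 0$, the right-hand sides of both fixed-point equations are monotonically increasing in the unknown. Consequently the Picard iterates starting from $0$ are non-decreasing in $n$ and converge to the unique solutions, whose finiteness (in sup-norm, uniformly in $t$) is granted by Lemma \ref{lem:norm-infty-v1} (resp.\ Lemma \ref{lem:norm-infty-v2}). Any pointwise comparison between the iterates will then transfer to the limits.

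For the right inequality $v^S_\Psi\leq v^B_\Psi$, I would induct on $n$: from $v^S_{\Psi,n}\leq v^B_{\Psi,n}$ (both $\geq 0$) and the elementary bound $\Psi + Vq\,v^2 \leq \Psi(1+v) + Vq\,v^2$ valid for $\Psi,v\geq 0$, the integrand on the right of \eqref{eq:w-integral-super} at stage $n$ is dominated pointwise by the integrand on the right of \eqref{eq:v-integral} at stage $n$. Applying the positivity-preserving operator $\T{t-s}^Q$ and integrating in $s$ yields $v^S_{\Psi,n+1}\leq v^B_{\Psi,n+1}$, and passing to $n\to\infty$ gives the claim.

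For the left inequality $v^B_{C\Psi}\leq v^S_\Psi$, the idea is to absorb the extra linear term $C\Psi\,v^B_{C\Psi}$ into the inhomogeneity $\Psi$ by choosing $C$ sufficiently small. I would pick $C\in(0,1)$ satisfying $C\norm{\Psi}{\infty}{}<C_1$ (so that Lemma \ref{lem:norm-infty-v1}/\ref{lem:norm-infty-v2} applies and yields $\norm{v^B_{C\Psi}}{\infty}{}\leq C_2 C\norm{\Psi}{\infty}{}$) and additionally $C\bigl(1+C_2 C\norm{\Psi}{\infty}{}\bigr)\leq 1$; both conditions hold for $C$ small enough. Under the second condition one has $C\Psi(1+v^B_{C\Psi})\leq \Psi$ pointwise. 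A second induction on the Picard iterates — assuming $v^B_{C\Psi,n}\leq v^S_{\Psi,n}$ and using the monotone bound $v^B_{C\Psi,n}\leq v^B_{C\Psi}$ together with $(v^B_{C\Psi,n})^2\leq (v^S_{\Psi,n})^2$ — propagates the inequality to the $(n+1)$-st iterate via \eqref{eq:v-integral} and \eqref{eq:w-integral-super}, and taking limits concludes.

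The only delicate point is ensuring that $C$ can be chosen uniformly in the time horizon $t$ and the parameter $r$, so that \eqref{eq:comparison} holds as a genuine pointwise inequality on $\Rd\times\R_+\times\R_+$. This is precisely what the uniform-in-$t$ sup-norm bounds of Lemma \ref{lem:norm-infty-v1}/\ref{lem:norm-infty-v2} provide, and is why those lemmas are invoked as hypotheses; once the uniform bound is in hand, the entire argument is a purely algebraic comparison of two integral equations driven by the same semigroup, with no further analytic estimates required.
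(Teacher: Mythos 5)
Your argument is correct and follows essentially the same route as the paper: the key step in both is to use the uniform sup-norm bounds of Lemma \ref{lem:norm-infty-v1}/\ref{lem:norm-infty-v2} to choose $C$ so small that $C\Psi(1+v^B_{C\Psi})\leq \Psi$, which makes $v^B_{C\Psi}$ a subsolution of the superprocess equation \eqref{eq:w-integral-super}, and then to compare with $v^S_\Psi$ by a fixed-point argument (the paper invokes the Banach contraction principle where you run a monotone Picard iteration, an equivalent mechanism also used in the paper's proof sketch of Lemma \ref{lem:convergence-v2}). The right-hand inequality, which the paper leaves to the reader, is handled by exactly the pointwise domination $\Psi+Vq\,v^2\leq\Psi(1+v)+Vq\,v^2$ you state.
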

\begin{proof}
	The second inequality is easy and is left to the reader. We may choose $C>0$ such that $C(1+\norm{v^B_{C\Psi}}{}{})\leq 1$. Therefore
	\begin{multline*}
		v^B_{C\Psi}\left(x,r,t\right)=\int_{0}^{t}\mathcal{T}^Q_{t-s}\left[C\Psi\left(\cdot,r+t-s\right)(1+v^B_{C\Psi}(\cdot,r+t-s,s)) + Vq  (v^B_{C\Psi}\left(\cdot,r+t-s,s\right))^2  \right]\left(x\right)\dd{s} \\ \leq 
		\int_{0}^{t}\mathcal{T}^Q_{t-s}\left[\Psi\left(\cdot,r+t-s,s\right) + Vq  (v^B_{C\Psi}\left(\cdot,r+t-s,s\right))^2  \right]\left(x\right)\dd{s}.
	\end{multline*}
Easy application of the Banach contraction principle concludes the proof.
\end{proof} 
%
%

\begin{proof}[Proof of Lemma \ref{lem:convergence-v2}(sketch). ]\label{pf:s}
	Let us define operator 
	\[
		(F(v))(x,r,t) = \intc{t} \T{t-s}^Q \theta\varphi(x) + \intc{t} \T{t-s}^Q \rbr{\theta\varphi(\cdot,s) v(\cdot,s,\theta)+ Vq v^2(\cdot,s,\theta)}(x) \dd{s}.
	\]
	For $\theta>0$ the sequence $\rbr{0, F(0), F(F(0)), \ldots}$ is non-decreasing and by the proof of the previous lemma bounded. Therefore it converges to a solution of the equation. One can also check that for $t$ small enough the operator is contraction. The unicity can be proven easily by subtractions of two distinct solutions. 
\end{proof}

To keep the proofs comprehensive we utilise the following notation
\begin{equation}
	v_T^B(x,r,t) : =v^B_{\Psi_T}(x,r,t) \:\text{ and }\: v^B_T(x) := v^B_T(x,0,T), \label{eq:simpl1} 
\end{equation}
The same also applies to $u^B_{\Psi_T}, v^S_{\Psi_T}, u^S_{\Psi_T}$.

\subsection{Laplace transforms} \label{sec:laplace} This section we are going to compute the Laplace transforms of space time variables $\tilde{Y}_T$ and $\tilde{X}_T$ defined by \eqref{eq:space-time-method}  for \eqref{eq:rescaled-occupation} and \eqref{def:occupation-process}. Let us recall notation \eqref{def:simplification}. We start with the branching particle system $N^B$.
\begin{prop} \label{prop:laplace-branching} 
	Let $\Phi \in \mathcal{B}(\mathbb{R}^{d+1})$ such that $\Psi\leq Q_0^B$ then for $\tilde{Y}^B_T$, $\tilde{X}^B_T$  defined for the BPS $N^B$ we have
	\[
		\lap{\ddp{\tilde{X}^B_T}{\Phi}} = \exp\rbr{H \intc{T}\intr u^B_T(x,T-s,s) \dd{s} },
	\]
	\[
		\lap{\ddp{\tilde{Y}^B_T}{\Phi}} = \exp\rbr{H \intc{T}\intr v^B_T(x,T-s,s) \dd{s} }.
	\]
\end{prop}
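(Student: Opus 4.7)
The plan is to exploit the Poisson cluster structure of the branching system with immigration. Conditional on the immigration field $\cbr{(s_i,x_i)}$, we have the decomposition $N^B_t = \sum_i N^{(s_i,x_i)}_t$, where the $N^{(s_i,x_i)}$ are independent branching subsystems, each generated by a single immigrant at $(s_i,x_i)$. By time-homogeneity, $N^{(s,x)}_t$ has the same distribution as $N^x_{t-s}$ for $t\geq s$, where $N^x$ is the immigration-free BPS started from $\delta_x$ appearing in \eqref{def:v}. Applying Fubini I would write $\ddp{\tilde{Y}_T^B}{\Phi}=\intc{T}\ddp{N_s^B}{\Psi_T(\cdot,s)}\ds$.

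To handle $\tilde{Y}_T^B$ I would plug in the cluster decomposition and perform the substitution $u=s-s_i$ inside each cluster to obtain $\ddp{\tilde{Y}_T^B}{\Phi} = \sum_i \intc{T-s_i}\ddp{N^{x_i}_u}{\Psi_T(\cdot,u+s_i)}\du$. Conditional independence factorises the exponential moment, and \eqref{def:v} identifies each factor as $1 + v^B_{\Psi_T}(x_i,s_i,T-s_i)$. Campbell's formula for the Poisson field of intensity $H\lambda_{d+1}$ then gives $\lap{\ddp{\tilde{Y}_T^B}{\Phi}} = \exp\rbr{H\intc{T}\intr v^B_{\Psi_T}(x,s,T-s)\dx\ds}$, and the substitution $s\mapsto T-s$ converts this to the announced form.

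For $\tilde{X}_T^B$ I would use $\ddp{\tilde{X}_T^B}{\Phi} = \ddp{\tilde{Y}_T^B}{\Phi} - \ev\ddp{\tilde{Y}_T^B}{\Phi}$ together with determinism of the centering to get $\lap{\ddp{\tilde{X}_T^B}{\Phi}} = e^{-\ev\ddp{\tilde{Y}_T^B}{\Phi}}\lap{\ddp{\tilde{Y}_T^B}{\Phi}}$. To identify the mean, I would linearize \eqref{eq:v-integral} at $\theta=0$: replacing $\Psi$ by $\theta\Psi$, differentiating at $\theta=0$ and using $v^B_0\equiv 0$ (so that the $\Psi v^B$ and $Vq(v^B)^2$ terms vanish to leading order) yields $\partial_\theta|_{\theta=0} v^B_{\theta\Psi} = \tilde{v}_\Psi$. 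This gives $\ev\ddp{\tilde{Y}_T^B}{\Phi} = H\intc{T}\intr \tilde{v}^B_T(x,T-s,s)\dx\ds$, and subtraction leaves the exponent $H\intc{T}\intr u^B_T(x,T-s,s)\dx\ds$ by the definition \eqref{eq:def-u}.

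The main obstacle will be the rigorous justification of Campbell's formula in this setting: one needs to verify that the inner exponential moment is finite, that $v^B_{\Psi_T}$ is pointwise well-defined, and that differentiation under the space-time integral is legitimate for the mean computation. The assumption $\Psi\leq Q_0^B$ combined with Lemma \ref{lem:equation} takes care of the former (since $\Psi_T\leq\Psi\leq Q_0^B$ for $F_T\geq 1$), while the quadratic bound \eqref{eq:u-estimate} together with Lemma \ref{lem:norm-infty-v1} ensures the absolute integrability of $u^B_T$ required to make the $\tilde{X}_T^B$ formula meaningful. Convergence of the product over immigrants is inherited from the Poisson exponential formula on the compact time window $[0,T]$ and needs no further argument.
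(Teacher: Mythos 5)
Your proposal reproduces the paper's argument: condition on the Poisson immigration field, factorize the exponential moment over the independent single-immigrant clusters, identify each factor via the one-particle equation \eqref{def:v}, and apply Campbell's formula; the paper explicitly carries out exactly these steps for the superprocess case (Proposition~\ref{prop:laplace-super}) and cites \cite[Section 3.3]{Mios:2009oq} for the analogous BPS argument. Your identification of the mean by linearizing \eqref{eq:v-integral} at $\theta=0$ is a correct way to recover $\tilde v_{\Psi_T}$ (equivalently one can compute $\mathbb{E}\langle N^x_u,f\rangle = \T{u}^Q f(x)$ directly under Campbell's formula), and the integrability caveats you flag are the right ones; the only cosmetic gap is that $\Psi_T\leq\Psi$ requires $F_T\geq 1$, which holds for $T$ large.
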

The proof is analogous to the proof in \cite[Section 3.3]{Mios:2009oq}. For the superprocess $N^S$ we have 
\begin{prop} \label{prop:laplace-super} 
	Let $\Phi\in \mathcal{B}(\mathbb{R}^{d+1})$ such that $\Psi\leq Q_0^S$ then for $\tilde{Y}^S_T$, $\tilde{X}^S_T$  defined for the superprocess $N^S$ we have
	\[
		\lap{\ddp{\tilde{X}^S_T}{\Phi}} = \exp\rbr{H \intc{T}\intr u^S_T(x,T-s,s) \dd{s} },
	\]
	\[
		\lap{\ddp{\tilde{Y}^S_T}{\Phi}} = \exp\rbr{H \intc{T}\intr v^S_T(x,T-s,s) \dd{s} }.
	\]
\end{prop}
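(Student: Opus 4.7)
The plan is to compute $\lap{\ddp{\tilde{Y}^S_T}{\Phi}}$ directly from the Poisson--superposition structure of the immigration, and then to deduce the formula for $\tilde{X}^S_T$ by subtracting the mean from the exponent. First I would rewrite $\ddp{\tilde{Y}^S_T}{\Phi}$ as a time integral against $N$: recalling notation \eqref{def:notation-T}--\eqref{def:simpl} and exchanging the order of integration in \eqref{eq:space-time-method} gives, taking $\tau=1$ without loss of generality,
\[
	\ddp{\tilde{Y}^S_T}{\Phi} = \intc{T}\ddp{N_s}{\Psi_T(\cdot,s)}\ds.
\]

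Next I would exploit the fact that $N^S$ is a Poisson superposition of independent copies $N^{x}$ of the no-immigration superprocess started from $\delta_{x}$, one for each point $(x_i,r_i)$ of a Poisson field on $\R_+\times\Rd$ of intensity $H\lambda_{d+1}$. The contribution of the branch born at $(x,r)$ to the integral above is $\intc{T-r}\ddp{N^{x}_u}{\Psi_T(\cdot,r+u)}\du$ (and $0$ for $r\geq T$), whose exponential moment, by the very definition of $v^S_\Psi$ used in Lemma \ref{lem:equation-super}, equals $1+v^S_{\Psi_T}(x,r,T-r)$. Campbell's formula for $\{(x_i,r_i)\}$ then yields
\[
	\lap{\ddp{\tilde{Y}^S_T}{\Phi}} = \exp\rbr{H\intc{T}\intr v^S_{\Psi_T}(x,r,T-r)\dx\dd{r}},
\]
and the substitution $s=T-r$ together with the shorthand \eqref{eq:simpl1} produces exactly the formula claimed for $\tilde{Y}^S_T$. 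The hypothesis $\Psi\leq Q^S_0$ is needed here to guarantee, via Lemma \ref{lem:convergence-v}, that $v^S_{\Psi_T}$ is finite and the inner double integral converges.

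For the centred variable I would use $\ddp{\tilde{X}^S_T}{\Phi}=\ddp{\tilde{Y}^S_T}{\Phi}-\ev{\ddp{\tilde{Y}^S_T}{\Phi}}$, which gives
\[
	\lap{\ddp{\tilde{X}^S_T}{\Phi}} = \lap{\ddp{\tilde{Y}^S_T}{\Phi}}\cdot \exp\rbr{-\ev{\ddp{\tilde{Y}^S_T}{\Phi}}}.
\]
The mean is obtained by differentiating the already computed Laplace transform in the scaling $\Phi\mapsto\theta\Phi$ at $\theta=0$; by \eqref{eq:w-integral-super}, the linear term of $v^S_{\theta\Psi_T}$ is precisely $\tilde{v}_{\Psi_T}$ of \eqref{sol:tv}, so
\[
	\ev{\ddp{\tilde{Y}^S_T}{\Phi}} = H\intc{T}\intr \tilde{v}_T(x,T-s,s)\dx\ds.
\]
Combining this with the formula for $\tilde{Y}^S_T$ and invoking \eqref{eq:def-u}, i.e.\ $u^S_{\Psi}=v^S_{\Psi}-\tilde{v}_{\Psi}$, gives the claim.

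The only delicate point, and where I expect the actual work to lie, is the rigorous justification of the Poisson--superposition identity and Campbell's formula in the measure-valued setting. A convenient way to sidestep this is to approximate: by Proposition \ref{prop:superprocess-convergence} together with the convergence $h_{n,\Psi}\to v^S_\Psi$ of \eqref{eq:v-n}, both sides of the identities above arise as limits of the corresponding quantities for the BPS $N^n$, for which Proposition \ref{prop:laplace-branching} is already established. The uniform bounds on $h_{n,\Psi_T}$ granted by Lemma \ref{lem:norm-infty-v1} under $\Psi\leq Q^S_0$ provide the uniform integrability needed to pass to the limit on both sides and conclude.
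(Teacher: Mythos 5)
Your final paragraph is, in substance, the paper's proof: pass through the approximating BPS $N^n$, condition on $Imm^n$ to express the Laplace transform via $n\,v^B_{n,\Psi_T/n}=h_{n,\Psi_T}$, and take $n\to\infty$ using \eqref{eq:v-n} and Proposition \ref{prop:superprocess-convergence}. That route is sound. The main argument preceding it, however, rests on a representation that is not available. The superprocess with immigration is the limit of Poisson superpositions at intensity $nH\lambda_{d+1}$ of \emph{mass-$1/n$} BPS families; as $n\to\infty$ the seeds shrink while the intensity blows up, and the limit is \emph{not} a Poisson superposition, at finite intensity $H\lambda_{d+1}$, of copies of the no-immigration superprocess started from $\delta_x$. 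The correct infinite-divisibility structure is in terms of the Dynkin--Kuznetsov excursion (canonical) measure $\mathbb{N}_x$, a $\sigma$-finite measure, not the law of $N^x$ with $N^x_0=\delta_x$. If your superposition claim were literally true, Campbell's formula would return
\[
\exp\rbr{H\intc{T}\intr\sbr{\mathbb{E}\,e^{\intc{T-r}\ddp{N^x_u}{\Psi_T(\cdot,r+u)}\du}-1}\dx\,\dd{r}},
\]
and this is \emph{not} $\exp\rbr{H\intc{T}\intr v^S_{\Psi_T}}$: by the branching (additivity) property of a superprocess, the Laplace functional starting from $\delta_x$ satisfies $\mathbb{E}\,e^{\int\ddp{N^x_u}{\Psi}\du}=e^{v^S_\Psi}$, so the bracket equals $e^{v^S_{\Psi_T}}-1$, not $v^S_{\Psi_T}$. (The paper's phrase ``analogue of \eqref{def:v}'' is loose on this point; the object that satisfies \eqref{eq:w-integral-super} and arises as $\lim_n n\,v^B_{n,\Psi/n}$ is the \emph{log}-Laplace, not Laplace$-1$.) Your computation reaches the right formula only because you compounded two errors --- the wrong Poisson intensity and the wrong expression for the exponential moment --- that happen to cancel; a correct direct derivation would have to use the canonical measure.

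Your deduction of the $\tilde X^S_T$ formula from the $\tilde Y^S_T$ one is fine: centring, differentiating the Laplace transform of $\theta\Phi$ at $\theta=0$ to identify the mean with $H\intc{T}\intr\tilde v_T$, and invoking $u^S=v^S-\tilde v$ from \eqref{eq:def-u} is exactly what the paper dismisses as ``simple calculations''. So: drop or correct the direct Campbell argument (or recast it in terms of $\mathbb{N}_x$), and keep the approximation route, which is the one the paper actually uses.
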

\begin{proof} Let us fix $\Psi$ fulfilling the assumptions. In Section \ref{sec:approx} we defined the sequence $\cbr{N^n}$ approximating the superprocess $N^S$. We denote by $\tilde{Y}^n_T$ the space time variable for \eqref{eq:rescaled-occupation} defined for $N^n$. We have
	\begin{equation}
		\ddp{\tilde{Y}^n_T}{\Phi} = \frac{T}{F_T} \sbr{\intc{1}\ddp{N^n_{Ts}}{\Psi(\cdot,s)}\dd{s}} = {\intc{T}\ddp{N^n_{s}}{\Psi_T(\cdot,s)}\dd{s}}. \label{eq:laplace-derivation} 
	\end{equation}
 Let us recall notation \eqref{def:simpl} and denote 
	\begin{equation*}
		K^n_T(\Phi) := \ev{\exp\rbr{ \ddp{\tilde{Y}^n_T }{\Phi} } } = \ev{\exp\rbr{{\intc{T}\ddp{N^n_{s}}{\Psi_T(\cdot,s)}\dd{s}}} }. 
	\end{equation*}
 Conditioning with respect to $Imm^n$ (Poisson random field describing the immigration of the $n$-th approximation), using independence of evolution of particles ({branching Markov property}) and \eqref{def:v} for the BPS starting from one particle we obtain
	\begin{multline}
		{\ev{\rbr{\left.\exp{\rbr{\intc{T}\ddp{N^{n}_{s}}{\Psi_T(\cdot,s)}\dd{s}}}\right|Imm^n}}} =\\
		\prod_{(t,x) \in \widehat{Imm}^n} \ev{\exp{\rbr{\int_t^T\ddp{N^{x,t,n}_{s-t}}{\Psi_T(\cdot,s)}\dd{s}}}} = \prod_{(t,x) \in \widehat{Imm}^n}\rbr{v^B_{n, \frac{\Psi_T}{n}}(x,t,T-t)-1}, 
	\end{multline}
	where $\widehat{Imm}^n$ is a (random) set such that $\sum_{(t,x) \in\widehat{Imm}^n} \delta_{(t,x)} = Imm^n\: a.s.$ viz. $\delta_{(t,x)}$ corresponds to a particle which immigrate to the system at time $t$ to location $x$. By $N^{x,t,n}$ we denote the BPS starting from location $x$ at time $t$ adhering to the dynamics of the $n$-the approximation. Following the notation of the proof of Lemma \ref{prop:laplace-super} we have 
	\begin{equation*}
		\ev{\exp{\rbr{-\intc{T}\ddp{N^{n}_{s}}{\Psi_T(\cdot,s)}\dd{s}}}} = \ev{\exp\cbr{\ddp{Imm^n}{\log (v^B_{n,\frac{\Psi_T}{n}}(\cdot,\star,T-\star)-1}}}, 
	\end{equation*}
	where $\cdot$,$\star$ denote integration with respect to space and time, respectively. Taking into account distribution of $Imm^n$ we obtain 
	\begin{equation*}
			K_T^n(\Phi) = \ev{\exp{\rbr{\intc{T}\ddp{N^{n}_{s}}{\Psi_T(\cdot,s)}\dd{s}}}} = \exp\rbr{ H \intc{T} \intr n v^B_{n,\frac{\Psi_T}{n}}(x,T-t,t) \dd{x} \dd{t}}. 
	\end{equation*}
	By the convergence \eqref{eq:v-n} and Proposition \ref{prop:superprocess-convergence} we get 
	\begin{equation}
		K_T^n(\Phi) \rightarrow K_T(\Phi) = \exp\cbr{ H \intc{T} \intr v^S_{\Psi_T}(x,T-t,t) \dd{x} \dd{t}  }, \label{eq:laplace1} 
	\end{equation}
	where $K_T(\Phi) = \ev{\exp\rbr{ \ddp{\tilde{Y}^S_T }{\Phi} } }$. Simple calculations show that 
	\begin{equation}
		L_T(\Phi):=\ev{\exp\cbr{\ddp{\tilde{X}^S_T}{\Phi}}} = \exp\cbr{ H \intc{T} \intr u^S_T(x,T-t,t) \dd{x} \dd{t}}. \label{eq:laplace-imm} 
	\end{equation}
\end{proof}

\subsection{Central limit theorem} \label{sec:scheme} In this section we present the proof of Theorem \ref{thm:clt1}. We follow closely the lines of the proof of \cite[Theorem 2.1]{Mios:2009oq}. To make it clear we present a general scheme first. Although the processes $X_T$ are signed-measure-valued it is convenient to regard them as $\SP$-valued. In this space one may employ a space-time method introduced in \cite{Bojdecki:1986aa} which together with Mitoma’s theorem constitute a powerful technique in proving weak functional convergence. 
\paragraph*{Convergence} From now on we will denote by $\tilde{X}^S_T$ a space-time variable (recall \eqref{eq:space-time-method} with $\tau=1$) defined for $X^S_T$ for the superprocess. To prove convergence of $\tilde{X}^S_T$ we will use the Laplace functional 
\begin{equation*}
	L_T(\Phi) = \ev{ \exp \rbr{-\ddp{\tilde{X}^S_T}{\Phi}}},\:\: \Phi\in \mathcal{S}(\mathbb{R}^{d+1})_+. 
\end{equation*}
For the limit process $X$ denote 
\begin{equation*}
	L(\Phi) = \ev{\exp \rbr{-\ddp{\tilde{X}^S}{\Phi}}},\:\: \Phi\in \mathcal{S}(\mathbb{R}^{d+1})_+. 
\end{equation*}
Once we have established convergence 
\begin{equation}
	L_T(\Phi) \rightarrow L(\Phi),\quad \text{ as } T\rightarrow +\infty, \:\:{\Phi\in \mathcal{S}(\mathbb{R}^{d+1})_+}. \label{lim:laplace} 
\end{equation}
we will obtain weak convergence $\tilde{X}_T\Rightarrow \tilde{X}$ and consequently $X_T\rightarrow_i X$. Two technical remarks should be made here. We consider only non-negative $\Phi$ of the first form described in Section \ref{sec:notation}. The procedure how to extend the convergence to any $\Phi$ is explained in \cite[Section 3.2]{Bojdecki:2006ab}. Another issue is the fact that $\ddp{\tilde{X}^S_T}{\Phi}$ is not non-negative. The usage of the Laplace transform in this paper is justified by the special (Gaussian) form of the limit. For more detailed explanation one can check also \cite[Section 3.2]{Bojdecki:2006ab}. As explained in \cite{Bojdecki:2007aa} due to the special form of the Laplace transform convergence \eqref{lim:laplace} implies also finite-dimensional convergence. We have
\[
	L_T(\Phi) = \exp\Bigg({-H \underbrace{\intc{T}\intr \bar{u}^S_T(x,T-s,s) \dd{s}}_{A(T)}  }\Bigg), \:\: \Phi \in \mathcal{S}(\mathbb{R}^{d+1})_+,
\]
where $\bar{u}^S_T:= \tilde{v}_T - \bar{v}^S_T$ is an analogue of \eqref{eq:delta-v-branching} defined by
\begin{equation}
	\bar{u}^S_{T} (x,r,t) = \intc{t} \T{t-s}^Q \sbr{V  (\bar{v}_T^S(\cdot, r+t-s,s))^2} \dd{s}. \label{eq:tmp14}
\end{equation}
and $\bar{v}^S_T$ is an analogue of \eqref{eq:w-integral-super} given by \eqref{eq:integral-super-simple}. The formula for $L(T)$ can be proved in the same as Proposition \ref{prop:laplace-super}. Note here that $\bar{v}^S_{T}$ and $\bar{u}^S_{T}$ are much alike $v_T$ and $u_T$ in \cite{Mios:2009oq}. It is worthwhile to mention that $\bar{v}^S_T$ is far easier to analyse than $v^S_T$ because we have obvious inequalities
 \begin{equation}
 	0\leq \bar{v}^S_T \leq \tv \leq \frac{C_{\Phi}}{F_T}. \label{ineq:tv>v} 
 \end{equation}
We can also use the following simple estimation 
\begin{equation}
	\bar{u}^S_T \leq \frac{C_\Phi}{F_T^2}. \label{eq:estimate2} 
\end{equation}
Our aim now is to calculate the limit of $A(T)$. To this end we replace $v^S_T$ with $\tilde{v}_T$ in \eqref{eq:tmp14} and calculate the limit for such changed expression.
\begin{equation}
	\tilde{A}(T) = V \intr\: \intc{T} \intc{t} \T{t-s}^Q\sbr{ \tilde{v}^2_T(\cdot, T-s,s)}(x) \dd{s} \dd{t} \dd{x}.\label{eq:tilde-a4} 
\end{equation}
$\tilde{A}(T)$ is the same as $\tilde{A}_4(T)$ in \cite[Section 3.3]{Mios:2009oq}. Therefore we have 
\begin{equation*}
	\lim_{T\rightarrow +\infty}\tilde{A}(T) = 2V \intc{1} \chi(1-v)^2 \dd{v} \int_{0}^{+\infty} \intr \mathcal{U}^Q \sbr{ \T{s}^Q\varphi(\cdot) \T{s}^Q \mathcal{U}^Q\varphi(\cdot) }(x) \dd{x} \dd{s} . 
\end{equation*}
Note that by assumptions (A5) the integral above is finite. We are left with estimation of $\tilde{A}(T) - A(T)$. By the definition of $\bar{u}_T^S$ and inequality \eqref{ineq:tv>v} we have 
\begin{equation*}
	|\tilde{A}(T) - A(T)| \leq 2V \intr\: \intc{T} \intc{t} \T{t-s}^Q\sbr{ \bar{u}^S_T\rbr{\cdot,T-s,s} \tilde{v}_T(\cdot, T-s,s)}(x) \dd{s} \dd{t} \dd{x}. 
\end{equation*}
Using \eqref{eq:estimate2} and \eqref{sol:tv} we write 
\begin{equation*}
	|\tilde{A}(T) - A(T)| \leq \frac{2V}{F_T^2} \intr\: \intc{T} \intc{t} \T{t-s}^Q\sbr{ \intc{s}\T{s-u}^Q \Psi_T(\cdot,T-s)}(x) \dd{u}\dd{s} \dd{t} \dd{x}. 
\end{equation*}
Using \eqref{def:simpl}, after simple calculations, we get 
\begin{equation*}
	|\tilde{A}(T) - A(T)| \leq \frac{2V}{F_T^3} \intr\: \intc{T} \intc{t} u \T{u}^Q\varphi(x) \dd{u} \dd{t} \dd{x}. 
\end{equation*}
Now, by using d'Hospital rule, it follows easily from assumption (A5) that 
\begin{equation*}
	|\tilde{A}(T) - A(T)| \rightarrow 0, \quad \text{ as }T\rightarrow +\infty.
\end{equation*}

\paragraph*{Tightness} Using additional assumptions (A8),(A9) the tightness can be proved utilising the Mitoma theorem \cite{Mitoma:1983aa}. It states that tightness of $\cbr{X_T}_T$ with trajectories in $C([0, 1], \SP)$ is equivalent to tightness of $\ddp{X_T}{\varphi}$, in $\mathcal{C}([0,\tau],\mathbb{R})$ for every $\varphi \in \mathcal{S}(\Rd)$. We adopt a technique introduced in \cite{Bojdecki:2006aa}. Recall a classical criterion \cite[Theorem 12.3]{Billingsley:1968aa}, i.e. a process $\ddp{X_T(t)}{\varphi}$ is tight if for any $t,s\geq0$ and constant $C>0$
\begin{equation}
 \ev{(\ddp{X_T(t)}{\varphi}- \ddp{X_T(s)}{\varphi})^4} \leq C (t-s)^2. \label{ineq:tightness}
\end{equation}
Following the scheme in \cite{Bojdecki:2006aa} we define a sequence $(\psi_n)_n$ in $\mathcal{S}(\mathbb{R})$, and $\chi_n(u) = \int_u^1 \psi_n(s) \dd{s}$ in a such way that 
\begin{equation*}
 \psi_n\rightarrow \delta_t - \delta_s,\quad 0\leq \chi_n \leq \mathbf{1}_{[s,t]}.\label{tmpp:chi-n}
\end{equation*}
Denote $\Phi_n=\varphi\otimes\psi_n$. We have
\[
 \lim_{n\rightarrow +\infty} \ddp{X_T}{\Phi_n} = \ddp{X_T(t)}{\varphi} - \ddp{X_T(s)}{\varphi}
\]
 thus by the Fatou lemma and the definition of $\psi_n$ we will obtain (\ref{ineq:tightness}) if we prove that
\[
 \ev{\ddp{\tilde{X}_T}{\Phi_n}^4} \leq C (t-s)^2,
\]
where $C$ is a constant independent of $n$ and $T$. From now on we fix $n$ and denote $\Phi:=\Phi_n$ and $\chi := \chi_n$. 
By properties of the Laplace transform we have
\[
 \ev{\ddp{\tilde{X}_T}{\Phi}^4} = \left.\frac{d^4}{d\theta^4}\right|_{\theta = 0}\!\! \ev{\eexp{-\theta \ddp{\tilde{X}_T}{\Phi}}}
\]
Hence the proof of tightness will be completed if we show 
\begin{equation*}
 \left. \frac{d^4}{d\theta^4}\right|_{\theta = 0}\!\! \ev{\eexp{-\theta \ddp{\tilde{X}_T}{\Phi}}} \leq C(t-s)^2. \label{eqss: laplace-tighness-end}
\end{equation*}
For the sake of brevity the detailed calculation are left for the reader.

\subsection{Large deviation principle} 
In this  section we present the proof of Theorem \ref{thm:ldp}. The proof of Theorem \ref{thm:ldp2} is very similar (some parts can be transformed directly and some with a help of Lemma \ref{lem:comparison}). Let us recall definition \eqref{eq:legendre-lambda} and denote
	\begin{equation*}
		\Lambda_\varphi (T,\nu) := T^{-1} \log \ev{} \exp\rbr{ T \intc{1} \ddp{Y^S_T(t) }{\varphi} \nu(dt) }.
	\end{equation*}
\begin{lem}\label{lem:convergence-in-ellis}
	 Let $\varphi \in \SD $ and $\nu \in BV(\R)$ such that $\norm{\varphi}{\infty}{}\norm{\chi_\nu}{\infty}{}  < Q_0^S $. Then we have
	\[
		\lim_{T\rightarrow +\infty} \Lambda_\varphi(T,\nu) = \Lambda_\varphi(\nu).
	\]
\end{lem}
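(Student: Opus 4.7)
The plan is to compute $\Lambda_\varphi(T,\nu)$ via the Laplace transform formula of Proposition~\ref{prop:laplace-super} and then identify its limit as $T\to\infty$ with the stationary equation from Lemma~\ref{lem:convergence-v}.

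First, by Fubini one has
\begin{equation*}
T\intc{1}\ddp{Y^S_T(t)}{\varphi}\,\nu(dt)=\intc{T}\ddp{N^S_s}{g(\cdot,s)}\,ds,\qquad g(x,s):=\varphi(x)\chi_\nu(s/T),
\end{equation*}
and $\norm{g}{\infty}{}\leq\norm{\varphi}{\infty}{}\norm{\chi_\nu}{\infty}{}<Q^S_0$. The argument in the proof of Proposition~\ref{prop:laplace-super} (conditioning on the immigration Poisson field and using the one-particle identity of Lemma~\ref{lem:equation-super}) does not really exploit the scaled product form of $\Psi_T$, so applied to this $g$ it gives, after the substitution $s=T(1-t)$,
\begin{equation*}
\Lambda_\varphi(T,\nu)=T^{-1}H\intc{T}\intr v^S_g(x,T-s,s)\,dx\,ds=H\intc{1}\intr U_T(x,t)\,dx\,dt,
\end{equation*}
with $U_T(x,t):=v^S_g(x,Tt,T(1-t))$ (the $H$ factor, absent from the stated $\Lambda_\varphi(\nu)$, is absorbed into the normalisation). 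Using Lemma~\ref{lem:equation-super} and the further change of variable $w=T(1-t)-u$, the identity $v^S_g(x,Tt+w,T(1-t)-w)=U_T(x,t+w/T)$ yields
\begin{equation*}
U_T(x,t)=\intc{T(1-t)}\T{w}^Q\bigl[\varphi(\cdot)\chi_\nu(t+w/T)+Vq\,U_T^2(\cdot,t+w/T)\bigr](x)\,dw,
\end{equation*}
the natural time-dependent analogue of the stationary equation
\begin{equation*}
v_\varphi(x,\chi_\nu(t))=\int_{0}^{+\infty}\T{w}^Q\bigl[\varphi(\cdot)\chi_\nu(t)+Vq\,v_\varphi^2(\cdot,\chi_\nu(t))\bigr](x)\,dw
\end{equation*}
of Lemma~\ref{lem:convergence-v} with $\theta=\chi_\nu(t)$.

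For each fixed $t\in[0,1)$, right-continuity of $\chi_\nu$ (which is automatic from $\chi_\nu(s)=\nu((s,1])$) gives $\chi_\nu(t+w/T)\to\chi_\nu(t)$ for every $w\geq 0$ as $T\to\infty$, uniformly on every compact $w$-interval. The three discrepancies between the two equations are then handled as follows. The tail $\int_{T(1-t)}^{+\infty}\T{w}^Q[\cdot]\,dw$ decays exponentially by (A7). The linear integrand $\T{w}^Q\varphi(\cdot)\chi_\nu(t+w/T)$ converges pointwise in $w$ and is dominated by $\norm{\chi_\nu}{\infty}{}\norm{\T{w}^Q\varphi}{1}{}\lesssim e^{-Q'w}$, so the linear term converges by dominated convergence. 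The quadratic term is the main difficulty: I factor $U_T^2-v_\varphi^2=(U_T+v_\varphi)(U_T-v_\varphi)$ (arguments matched appropriately), invoke the uniform $L^\infty$ bound from Lemma~\ref{lem:norm-infty-v1}, and exploit the smallness hypothesis $\norm{\varphi}{\infty}{}\norm{\chi_\nu}{\infty}{}<Q^S_0$---which is precisely the mechanism rendering the iteration in the proof of Lemma~\ref{lem:convergence-v2} convergent---to turn the resulting integral operator into a strict contraction on $L^1(\Rd)$. A Gronwall-type estimate of the type used in Lemma~\ref{lem:helper} then forces $\norm{U_T(\cdot,t)-v_\varphi(\cdot,\chi_\nu(t))}{1}{}\to 0$ for every $t\in(0,1)$.

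Finally, the outer integral is closed by dominated convergence. The uniform bound $\norm{U_T(\cdot,t)}{1}{}\leq C\chi_\nu(t)$ supplied by Lemma~\ref{lem:helper} provides an integrable majorant in $t$, so that
\begin{equation*}
H\intc{1}\intr U_T(x,t)\,dx\,dt\;\longrightarrow\;H\intc{1}\intr v_\varphi(x,\chi_\nu(t))\,dx\,dt=\Lambda_\varphi(\nu),
\end{equation*}
as required. The chief obstacle is controlling the quadratic term uniformly in $t$ while avoiding the endpoints $\{0,1\}$ (where neither $Tt$ nor $T(1-t)$ is large and the stationary limit need not be reached); these endpoint strips contribute negligibly to the outer $t$-integral precisely because of the pointwise $L^1$-in-$x$ bound just cited.
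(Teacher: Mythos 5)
Your overall skeleton — express $\Lambda_\varphi(T,\nu)$ through Proposition~\ref{prop:laplace-super}, rescale to $\intc{1}\intr U_T(x,t)\,\dx\,\dt$, establish pointwise-in-$t$ convergence of the inner integral, then close with dominated convergence using the bound of Lemma~\ref{lem:helper} — is the same as the paper's. But for the key step (convergence of $U_T(\cdot,t)$), the paper and you diverge sharply, and this is where your argument has genuine gaps.

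The paper never looks at the nonlinear integral equation at all. Instead it uses the \emph{probabilistic representation} from \eqref{def:v}: $v^S_{\Psi_T}(x,T(1-t),Tt)=\ev{}\exp\bigl(\intc{Tt}\ddp{N^x_s}{\varphi}\chi_\nu((1-t)+s/T)\,\ds\bigr)-1$. It then shows the \emph{exponent} converges a.s.\ to $\chi_\nu(1-t)\intc{\infty}\ddp{N^x_s}{\varphi}\,\ds$, the two ingredients being right-continuity of $\chi_\nu$ (giving $\chi_\nu((1-t)+s/T)\to\chi_\nu(1-t)$ pointwise in $s$) and the a.s.\ finiteness of $\intc{\infty}\ddp{N^x_s}{\varphi}\,\ds$, which comes from the subcriticality $Q>0$ and supplies the integrable majorant for the dominated convergence inside the expectation. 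The rest is two further applications of dominated convergence (over the expectation and over $(x,t)$). This sidesteps the nonlinear analysis entirely.

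Your replacement for this step — the contraction/Gronwall argument — is only sketched and has real holes. First, the factorization $U^2_T-v^2_\varphi=(U_T+v_\varphi)(U_T-v_\varphi)$ cannot be applied directly: in your own displayed equation the quadratic term is $U_T^2(\cdot,t+w/T)$, while in the stationary equation it is $v_\varphi^2(\cdot,\chi_\nu(t))$, so the two arguments do not match and you must additionally control $U_T(\cdot,t+w/T)-U_T(\cdot,t)$ uniformly, which is not addressed. Second, the ``strict contraction on $L^1(\Rd)$'' is asserted without exhibiting the operator; because of the $w/T$-shifts, the difference $U_T-v_\varphi$ does not satisfy a closed fixed-point equation with a contracting linear part, so this is more delicate than the Banach contraction in the proof of Lemma~\ref{lem:convergence-v2}. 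Third, the stationary equation $v_\varphi(x,\theta)=\int_0^\infty\T{w}^Q[\theta\varphi+Vq\,v_\varphi^2(\cdot,\theta)]\,\dd{w}$ is not what Lemma~\ref{lem:convergence-v} states — there $v_\varphi(x,\theta)$ is defined as a limit of a time-dependent equation — so you would need a separate argument to pass to this stationary form. None of these gaps is fatal in principle, but filling them would cost more work than the paper's probabilistic one-liner for the same step, and you should be aware that the route the paper actually takes avoids the nonlinearity altogether.
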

\begin{proof} Let us recall notation \eqref{def:notation-T} and denote $\Psi_T(x,s) := \varphi(x) \chi_T(s)$. Let $N^x$ denote the superprocess starting from $N_0^x = \delta_x$ without immigration. Definition \eqref{def:v} yields
	\begin{equation*}
		v_{\Psi_T}^S (x, T(1-t), Tt) =  \ev{} \exp \rbr{ \intc{Tt} \ddp{N_s^x}{\varphi} \chi_\nu((1-t) + s/T) \dd{s}} -1.
	\end{equation*}
	By \cite[Lemma 4.1]{Hong:2005aa} it is finite. Without loss of generality we may assume that $\chi_\nu$ is a càdlàg.
	Let us denote now
	\begin{equation*}
		A(T) := \intc{Tt} \ddp{N_s^x}{\varphi} \chi((1-t) + s/T) \dd{s} \quad B(T) := \chi(1-t) \intc{Tt} \ddp{N_s^x}{\varphi} \dd{s}.
	\end{equation*}
	We have 
	\begin{equation*}
		|A(T) - B(T)| \leq \intc{Tt} \ddp{N_s^x}{\varphi} |\chi((1-t)+s/T) - \chi(1-t)| \dd{s}. 
	\end{equation*}
	By the dominated Lebesgue convergence theorem (by the sub-criticality of the branching law there is $\intc{\infty} \ddp{N_s^x}{\varphi} \dd{s}$ is finite a.s. ) we have 
	\begin{equation*}
		|A(T) - B(T)| \rightarrow 0, \:\: \text{a.s.} 
	\end{equation*}
	A next usage of the dominated convergence theorem yields
	\begin{equation}
		v_\varphi(x, \chi(1-t)) = \lim_{ T\rightarrow +\infty} v^S_{\Psi_T} (x, T(1-t), Tt), \label{eq:tmpll}
	\end{equation}
	where $v_\varphi$ is defined by \eqref{eq:vvv}. By Proposition \ref{prop:laplace-super} we get
		\[
			\Lambda_\varphi(T,\nu) = T^{-1} \rbr{H \intc{T} \intr v^S_{\Psi_T}(x,T-t,t) \dd{x} \dd{t} } =
			H \intc{1} \intr v_{\Psi_T}(x,T(1-t),Tt) \dd{x}.
		\]
		Appealing to \eqref{eq:tmpll} and the dominated Lebesgue theorem concludes. 
\end{proof}
\begin{proof} [Proof of Theorem \ref{thm:ldp}] \textbf{Upper bound} We follow a standard route of showing functional large deviation principle via studying multidimensional case. This is also emphasised by the use of the same notation as in the infinite dimensional case. Let us consider $\theta= (\theta_1, \theta_2,\cdots,\theta_n) \in \mathbb{R}^n$ and set 
	\begin{equation}
		\chi_\theta(s) := \sum_{i=1}^n  (\theta_i + \theta_{i+1} + \ldots +\theta_n) \mathbf{1}_{[\frac{i-1}{n}, \frac{i}{n})}(s). \label{eq:chi-theta} 
	\end{equation}
	It is straightforward to check that according to \eqref{def:measure-chi} $\chi_\theta = \chi_\nu$ for $\nu = \sum_{i=1}^{n} \theta_i \delta_{i/n}$. We also denote 
	\[
		\Lambda_\varphi(T,\theta) = T^{-1}\log \lap{T \sum_{i=1}^{n} \theta_i \ddp{Y_T(i/n)}{\varphi} }
	\]
	By Proposition \ref{prop:laplace-branching} and Lemma \ref{lem:convergence-in-ellis} for any $\theta \in \mathbb{R}^n$ such that $\sup_t \chi_\theta(t) \leq Q_0^S/ \norm{\varphi}{\infty}{}$ we have
	\[
		\Lambda_\varphi(\theta) := \liminf_{T\rightarrow +\infty} \Lambda(T,\theta) = H \intc{1} \intr v_\varphi(x, \chi_\theta(t)) \dd{x} \dd{t} = \frac{H}{n} \sum_{i=1}^n \intr v_\varphi(x, \theta_i + \theta_{i+1}+\cdots +\theta_n) \dd{x}.
	\]
	We check that $\Lambda$ is a convex function with respect to each $\theta_i$. Let us recall \eqref{eq:def-A2} and take $f\in \mathbb{R}^n$ $f=(f_1,f_2,\ldots,f_n)$ such that $0< f_i - (f_{i-1} + \cdots +f_1) < \frac{A}{nH}$. For such vector we can calculate the Legendre transform
	\[
		\Lambda_\varphi^*(f) = \sup_{\theta\in \R^n} (\ddp{f}{\theta} - \Lambda_\varphi(\theta)).
	\]
	Indeed, let $V_\varphi(\theta) := \intr v_\varphi(x,\theta) \dd{x}$. Using standard calculus we know that sup is attained at the solution of the following set of equations:
	\[
		0=\frac{\partial}{\partial \theta_j}\rbr{\ddp{f}{\theta} - \Lambda_\varphi(\theta)} = f_j - \frac{H}{n} \sum_{i=1}^j V'_\varphi(x, \theta_i +\cdots +\theta_n), \quad j\in \cbr{1,\cdots,n}.
	\]
	The vector $f$ was chosen in such a way that the solution of the above set of equations $\theta = (\theta_1, \theta_2, \ldots, \theta_n)$ is such that $\forall_{i} \theta_i+\ldots+\theta_n < Q_0^S/ \norm{\varphi}{\infty}{}$ (more details can be found in the proof of \cite[Theorem 4.1]{Hong:2005aa}). In other words, for this $\theta$ we have $\sup_t \chi_\theta(t) \leq Q_0^S/ \norm{\varphi}{\infty}{}$). This considerations together with \cite[Lemma 2.3.9]{Dembo:1998fu} entitle us to use the G\"artner-Ellis theorem (see e.g. \cite[Theorem 2.3.6]{Dembo:1998fu}) establishing the result in the finite-dimensional setting.\\
	%
	%
	Now we are ready to prove the upper bound. It is suff`icient to show that for any $f \in \mathcal{C}_{0,A/H}$ and any $\delta>0$ 
	\[
		\liminf_T T^{-1} \pr{\ddp{Y_T}{\varphi}\in B(f,\delta)} \geq -  \Lambda_\varphi^*(f),
	\]
	where $B(f,\delta)$ denotes ball in $\mathcal{C}([0,1], \R)$. Let us denote 
	\[
		O_{n,\epsilon} = \cbr{g\in \mathcal{C}([0,1], \R): g(i/n)\in (f(i/n)-\epsilon, f(i/n)+\epsilon), i\in \cbr{0,1,\ldots, n}}, \quad n\in \mathbb{N}, \epsilon>0
	\]
	One can check that there exists $n$ and $\epsilon$ such that $O_{n,\epsilon} \setminus B(f,\delta)$ contains only functions which are not increasing (i.e. for any such function $g$ we can find $s<t$ such that $h(s)>h(t)$). Obviously $Y_T$ is almost surely increasing hence
	\[
		\pr{\ddp{Y_T}{\varphi}\in B(f,\delta)} \geq  \pr{\ddp{Y_T}{\varphi}\in O_{n,\epsilon}} \geq (*)
	\]
	This reduced the problem to finite number of dimensions therefore
	\begin{multline*}
		(*) = \liminf_T T^{-1} \pr{\ddp{Y_T(i/n)}{\varphi}\in  (f(i/n)-\epsilon, f(i/n)+\epsilon), i\in\cbr{1,\ldots,n}} \\\geq 
		- \Lambda_\varphi^*((f(0), f(1/n), \ldots, f(1 - 1/n))).
	\end{multline*}
	Notice that the last quantity is the same as \eqref{eq:crippled-legendre} if we restrict $B$ in its definition to the set of point measures with the support in the set $\cbr{1/n,2/n,\ldots,1}$.
	\paragraph{Lower bound} 
	\label{par:lower_bound} Let us take function in $f \in \mathcal{C}_{0,A/H}$, for any $\delta>0$ we can find $\nu_0$ such that $\ddp{f}{\nu_0} - \Lambda_\varphi(\nu_0 ) \geq \Lambda_\varphi^*(f) - \frac{\delta }{4},$ and $\sup_{t} \chi_{\nu_0}(t) < Q_0^S/ \norm{\varphi}{\infty}{}$. $\nu_0$ can be approximated by a point measure as in the previous section such that  
	\[
		\ddp{f}{\nu} - \Lambda_\varphi(\nu ) \geq \Lambda_\varphi^*(f) - \frac{\delta }{2}.
	\]
	Following the notation of the previous section we write its total variation $|\nu|=\sum_{i=1}^n |\theta_i| <+\infty$ (as each $|\theta_i|$ is bounded; for details see the proof of \cite[Theorem 4.1]{Hong:2005aa}). We define $r := \delta/(2|\nu|)$ and consider a ball $B(f,r)$. For any $g\in B(f,r)$ we have $\ddp{\nu}{f-g} \leq \delta/2$. Using the Chebyshev inequality we obtain
	\begin{multline*}
		\frac{1}{T} \log \pr{\ddp{Y_T}{\varphi} \in \overline{B(f,r)}} \\\leq \frac{1}{T} \log \pr{  \intc{1} \rbr{\ddp{Y_T(s)}{\varphi} -f(s)} \nu(\dd{s}) \geq -\delta/2} = \frac{1}{T} \log \pr{ \intc{1} \ddp{Y_T(s)}{\varphi}\nu(\dd{s}) \geq \ddp{\nu}{f}-\delta/2} \\\leq 
		\frac{1}{T} \log \pr{ \exp\cbr{T \intc{1} \ddp{Y_T(s)}{\varphi} \nu(\dd{s})} \geq \exp\cbr{\ddp{T \nu}{f}-T \delta/2}} \leq \Lambda_\varphi(T,\nu) - \ddp{\nu}{f} + \delta/2
	\end{multline*}
	Now we easily conclude that
	\[
		\limsup_{T\rightarrow +\infty}\frac{1}{T} \log \pr{Y_T \in \overline{B(f,r)}} \leq - \Lambda^*(f) + \delta. 
	\]
\end{proof}

\subsection{Functional moderate deviation principle} \label{sec:mdp}  In this section we prove Theorem \ref{thm:ldp22} (we skip the proof of Theorem \ref{thm:ldp23} which is simpler). Throughout the whole proof $F_T = T^{(1+\alpha)/2},\: 0<\alpha<1$ and $\varphi\in \SD_+$ is fixed. We will skip the superscript $^B$. 
\paragraph*{Lower bound}
Firstly we will prove the lower estimate for compact set. Let $\nu \in BV(\R)$  and define 
\begin{equation*}
	\Lambda_\varphi(T,\nu) := T^{-\alpha}\log \rbr{\ev{}\exp\cbr{\theta T^{\alpha} \intc{1} \ddp{X_T(t)}{\varphi}  \nu(\dd{t})} }. 
\end{equation*}
We introduce an additional parameter $ \theta $; in this part of proof it is always $ \theta = 1 $. By Proposition \ref{prop:laplace-branching} we have
\begin{equation}
	\Lambda_\varphi(T,\nu) = T^{-\alpha} { H\intc{T} \intr u_{\Psi(T,\theta)}(x,T-t,t) \dd{x} \dd{t} }. \label{eq:Lambda}
\end{equation}
where $\Psi(T,\theta) = \theta T^{(\alpha-1)/2} \varphi \otimes \chi_T$ (see notation in Section \ref{sec:notation}). Since $\Psi(T,\theta)\rightarrow_T 0$ we know that $\Lambda_\varphi(T,\nu)$ is well-defined for $T$'s large enough.
\[ \Lambda_\varphi(T,\nu) = H T^{-\alpha} \intc{T} \intr u_{\Psi(T,\theta)}(x,T-t,t) \dd{x} \dd{t}. \]
Using equation \eqref{eq:delta-v} we obtain
\[ \Lambda_\varphi(T,\nu) = \Lambda_1(T,\nu) + \Lambda_2(T,\nu), \]
where 
\begin{equation*}
	\Lambda_1(T,\nu) := H T^{-\alpha} \intc{T} \intr \intc{t} \T{t-s}^Q \sbr{{\Psi(T,\theta)}(\cdot, T-s) v_{\Psi(T,\theta)}(\cdot, T-s,s) }(x) \dd{s} \dd{x} \dd{t} , \label{eq:Lambda1}
\end{equation*}
\begin{equation}
	\Lambda_2(T,\nu) := H Vq T^{-\alpha} \intc{T} \intr \intc{t} \T{t-s}^Q \sbr{ v_{\Psi(T,\theta)}^2(\cdot, T-s,s) }(x) \dd{s} \dd{x} \dd{t}.\label{eq:Lambda2}
\end{equation}
Using the Fubini theorem we get 
\begin{equation*}
	\Lambda_1(T,\nu) = H T^{-\alpha} \intc{T} \intr \intc{T-s} \T{u}^Q \sbr{{\Psi(T,\theta)}(\cdot, T-s) v_{\Psi(T,\theta)}(\cdot, T-s,s) }(x) \dd{u}  \dd{x}\dd{s},   
\end{equation*}
\begin{equation*}
	\Lambda_2(T,\nu) = H Vq T^{-\alpha} \intc{T} \intr \intc{T-s} \T{u}^Q  \sbr{{ v_{\Psi(T,\theta)}^2(\cdot, T-s,s) }}(x) \dd{u} \dd{x} \dd{s}.   
\end{equation*}
This is approximated by (we recall \eqref{eq:potentialOperator})
\begin{equation*}
	{\Lambda}_{1a}(T,\nu) := H T^{-\alpha} \intc{T} \intr \mathcal{U}^Q\sbr{{\Psi(T,\theta)}(\cdot, T-s) v_{\Psi(T,\theta)}(\cdot, T-s,s) }(x) \dd{x}\dd{s}, \label{eq:Lambda1a}
\end{equation*}
\begin{equation}
	{\Lambda}_{2a}(T,\nu) := H Vq T^{-\alpha} \intc{T} \intr \mathcal{U}^Q \sbr{ v_{\Psi(T,\theta)}^2(\cdot, T-s,s) }(x) \dd{x} \dd{s} . \label{eq:Lambda2a}
\end{equation}
In the next step we approximate $ v_{\Psi(T,\theta)}(x,T-s,s) $ with $ \intc{s} \T{s-u}^Q {\Psi(T,\theta)} \dd{u} = \theta T^{(\alpha-1)/2} \intc{s} \T{s-u}^Q \varphi(x)\chi_T(T-u) \dd{u}$, namely
\begin{equation*}
	\Lambda_{1b}(T,\nu) := H \theta^2 T^{-1} \intc{T} \intr \mathcal{U}^Q \sbr{ \varphi(\cdot) \chi_T(T-s) \intc{s} \T{s-u}^Q \varphi(\cdot)\chi_T(T-u) \dd{u}}(x) \dd{x} \dd{s},  \label{eq:Lambda1b}
\end{equation*}
\begin{equation}
	\Lambda_{2b}(T,\nu) := H Vq \theta^2 T^{-1} \intc{T} \intr  \mathcal{U}^Q \sbr{ \rbr{\intc{s} \T{s-u}^Q \varphi(\cdot)\chi_T(T-u) \dd{u}}^2}(x) \dd{x} \dd{s}. \label{eq:Lambda2b}
\end{equation}
Now we substitute $ s\rightarrow Ts $, use the Fubini theorem and \eqref{def:notation-T}
\begin{equation*}
	\Lambda_{1b}(T,\nu) = \theta^2 H \intc{1} \intc{Ts} \intr \mathcal{U}^Q \sbr{ \varphi(\cdot) \T{Ts-u}^Q \varphi(\cdot)\chi(1-u/T) \chi(1-s)}(x) \dd{x}\dd{u} \dd{s}, 
\end{equation*}
\begin{equation*}
	\Lambda_{2b}(T,\nu) = \theta^2 H Vq \intc{1} \intr \mathcal{U}^Q \sbr{ \rbr{\intc{Ts} \T{Ts-u}^Q \varphi(\cdot)\chi(1-u/T) \dd{u}}^2 }(x) \dd{x}\dd{s}. 
\end{equation*}
Substituting $ u\rightarrow Ts-u $ we get 
\begin{equation*}
	\Lambda_{1b}(T,\nu) = \theta^2 H \intc{1} \intc{Ts} \intr \mathcal{U}^Q \sbr{ \varphi(\cdot) \T{u}^Q \varphi(\cdot)}(x) \chi(1-s +u/T)\chi(1-s)  \dd{x} \dd{u} \dd{s},  
\end{equation*}
\begin{equation*}
	\Lambda_{2b}(T,\nu) = \theta^2 H Vq \intc{1} \intr \mathcal{U}^Q \sbr{ \rbr{\intc{Ts} \T{u}^Q \varphi(\cdot)\chi(1-s + u/T) \dd{u}}^2}(x)  \dd{x} \dd{s}. 
\end{equation*}
We define also 
\begin{equation*}
	\Lambda_{1c}(T,\nu) := \theta^2 H \intc{1} \intc{Ts} \intr \mathcal{U}^Q \sbr{\varphi(\cdot) \T{u}^Q \varphi(\cdot)}(x) \chi^2(1-s) \dd{x} \dd{u} \dd{s},  \label{eq:Lambda1c}
\end{equation*}
\begin{equation}
	\Lambda_{2c}(T,\nu) := \theta^2 HVq \intc{1} \intr \mathcal{U}^Q \sbr{\rbr{\intc{Ts} \T{u}^Q \varphi(\cdot)\chi(1-s) \dd{u}}^2}(x) \dd{x} \dd{s}.  \label{eq:Lambda2c}
\end{equation}
Finally we notice that both function are increasing in $T$ and recall assumption (A4) to get (we denote $T_3(\chi)= \intc{1} \chi(1-s)^2\dd{s}$)
\begin{equation*}
	\lim_{T\rightarrow +\infty }\Lambda_{1c}(T,\nu) = (\theta^2 H) T_1(\varphi) T_3(\chi), \quad \lim_{T\rightarrow +\infty }\Lambda_{2c}(T,\nu) = (\theta^2 HVq) T_2(\varphi) T_3(\chi). 
\end{equation*}
We will show that in the limits of $\Lambda_{1c}, \Lambda_{2c} $ are the same as the ones of $\Lambda_{1}, \Lambda_{2} $. Firstly, using (A7) we easily get 
\begin{multline*}
	| \Lambda_{2c}(T,\nu) - \Lambda_{2b}(T,\nu)| \leq \theta^2 C \intc{1} \intr \rbr{\intc{Ts} \T{u}^Q \varphi(x)|\chi(1-s) + \chi(1-s+u/T)| \dd{u}}\\ \rbr{\intc{Ts} \T{u}^Q \varphi(x)|\chi(1-s) - \chi(1-s+u/T)| \dd{u}} \dd{x} \dd{s}. 
\end{multline*}
Using (A7) the first integral is finite, hence 
\begin{equation*}
	| \Lambda_{2c}(T,\nu) - \Lambda_{2b}(T,\nu)| \leq  \theta^2 C_1 \intc{1} \intr \rbr{\intc{Ts} \T{u}^Q \varphi(x)|\chi(1-s) - \chi(1-s+u/T)| \dd{u}} \dd{x} \dd{s} \rightarrow 0.
\end{equation*}
since we can  observe that $ \norm{\chi}{\infty}{}<+\infty $ and one can use the Lebesgue dominated convergence theorem. Analogously
\begin{equation*}
	| \Lambda_{1c}(T,\nu) - \Lambda_{1b}(T,\nu)| \rightarrow 0. 
\end{equation*}
Using assumption (A7) again we have
\begin{equation*}
	|{\Lambda}_{1a}(T,\nu) - {\Lambda}_{1b}(T,\nu) | \leq C_1 T^{-\alpha} \intc{T} \intr {{|\Psi|(T,\theta)}(x, T-s) u_{|\Psi|(T,\theta)}(x,T-s,s)  }\dd{x} \dd{s}. 
\end{equation*}
By \eqref{eq:u-estimate} we know that for $T$'s large enough $ u_{|\Psi|(T,\theta)}(x,T-s,s) \leq C  T^{-1+\alpha}  $ hence 
\begin{equation*}
	|{\Lambda}_{1a}(T,\nu) - {\Lambda}_{1b}(T,\nu) | \leq C_2  T^{-1} \intc{T} \intr {{|\Psi|(T,\theta)}(x, T-s)  }\dd{s} \dd{x} \leq C_2 \theta^3 T^{(\alpha-1)/2} \rightarrow 0 .
\end{equation*}
Similarly using assumption (A7) and Lemma \ref{lem:norm-infty-v1} we get
\begin{equation*}
	|{\Lambda}_{2a}(T,\nu) - {\Lambda}_{2b}(T,\nu) | \leq C_1 T^{-\alpha} \intc{T} \intr u_{|\Psi|(T,\theta)}(x,T-s,s) v_{|\Psi|(T,\theta)}(x,T-s,s)  \dd{x} \dd{s} , 
\end{equation*}
\begin{equation*}
	|{\Lambda}_{2a}(T,\nu) - {\Lambda}_{2b}(T,\nu) | \leq C_2 T^{-1} \intc{T} \intr  v_{|\Psi|(T,\theta)}(x,T-s,s)  \dd{x} \dd{s}   \leq C_2 \theta^3 T^{(\alpha-1)/2} \rightarrow 0.
\end{equation*}
Once again we utilise (A7) to obtain
\[
	|\Lambda_{1a}(T,\nu) - \Lambda_{1}(T,\nu)| \leq C_1 T^{-\alpha} \intc{T} \intr {e^{-(T-s)Q'}}{{|\Psi|(T,\theta)}(x, T-s) v_{|\Psi|(T,\theta)}(x, T-s,s) } \dd{s} \dd{x}.  
\]
By Lemma \ref{lem:norm-infty-v1} we get 
\[
	|\Lambda_{1a}(T,\nu) - \Lambda_{1}(T,\nu)| \leq C_2 T^{-1} \intc{T} \intr {e^{-(T-s)Q'}}{{|\Psi|(T,\theta)}(x, T-s) } \dd{s}  \dd{x}  \leq C_3 T^{-1}\rightarrow 0.
\]
Analogously using Lemma \ref{lem:norm-infty-v1} once more  and then Lemma \ref{lem:helper} we have 
\[
	|\Lambda_{2a}(T,\nu) - \Lambda_{2}(T,\nu)| \leq 	H T^{-1} \intc{T} \intr {e^{-(T-s)Q'}} { v_{|\Psi|(T,\theta)}(x, T-s,s) } \dd{s}  \dd{x}\leq 	C T^{(\alpha-1)/2}  \rightarrow 0.
\]
Finally, we put all the calculation together
\begin{equation}
	\Lambda_\varphi(\nu):=\lim_{T\rightarrow +\infty} \Lambda_\varphi(T,\nu) = \theta^2 H \rbr{\intc{1} \chi(1-s)^2\dd{s}} \rbr{T_1(\varphi)+ Vq T_2(\varphi)}. \label{eq:tmp17} 
\end{equation}
Once we prove that the exponential tightness holds - Section \ref{sec:exp-tighntess} - by \cite[Theorem 2.2.4]{Deuschel:1989oq} and \cite[Lemma 1.3.8]{Deuschel:1989oq} lower bound in Theorem \ref{thm:ldp22} will be established.

\paragraph*{Upper bound} 
We start with the multidimensional case. We utilise the notation introduced in the proof of the large deviation principle. Namely, consider a vector $\theta= (\theta_1, \theta_2,\cdots,\theta_n)$ and recall \eqref{eq:chi-theta}. Further we denote
\[
	\Lambda_\varphi(T,\theta) := T^{-\alpha}\log \lap{T^{\alpha} \sum_{i=1}^{n} \theta_i \ddp{X_T(i/n)}{\varphi} }.
\]
This is in fact a special case of \eqref{eq:tmp17} hence we already know that
\[
	\Lambda_\varphi(\theta) = \lim_{T\rightarrow +\infty}\Lambda_\varphi(T,\theta) = H \rbr{\intc{1} \chi_\theta(1-s)^2\dd{s}}  \rbr{T_1(\varphi)+ Vq T_2(\varphi)}.
\]
Denote its Legendre transform by $\Lambda^*_\varphi$. \cite[Lemma 2.3.9]{Dembo:1998fu} entitle us to use the G\"artner-Ellis theorem (see e.g. \cite[Theorem 2.3.6]{Dembo:1998fu}) hence for any open set $O\subset \mathbb{R}^n$ we get the following deviation principle 
\[
	\liminf_{T\rightarrow +\infty} T^{-\alpha} \log \pr{(\ddp{X_T(1/n)}{\varphi}, \ldots,\ddp{X_T(1)}{\varphi}) \in O } \geq -\inf_{x \in O} \Lambda_\varphi^*(x),
\]
In order to prove \eqref{eq:mdp-upper} it suffices to show that for any $f \in H^1$  and $\epsilon>0$ 
\begin{equation}
\liminf_{T\rightarrow+\infty} T^{-\alpha} \log \pr{\ddp{X_T}{\varphi} \in B(f,\epsilon)} \geq - \Lambda_\varphi^*(f).	 \label{eq:get-it-rigtht}
\end{equation}
where $B(f,\epsilon)$ denotes a ball in $\mathcal{C}([0,1],\R)$. Consider now
\[
	O_n := \cbr{g\in \mathcal{C}([0,1],\R): g(i/n)\in (f(i/n)-\epsilon/2, f(i/n)+\epsilon/2), i\in\cbr{1,\ldots n}},  .
\]
\[
	\tilde{O}_n := \Pi_{i=1}^{n}  (f(i/n)-\epsilon/2, f(i/n)+\epsilon/2) \subset \R^n
\]
It is easy to check that for any $f\in O_n$ one have $\Lambda_\varphi^*(f) \geq \Lambda_\varphi^*(f(1/n), f(2/n), \ldots, f(1))$ hence
\[
	\liminf_{T\rightarrow +\infty} T^{-\alpha}\log \pr{\ddp{X_T}{\varphi} \in O_n } \geq -\inf_{x \in \tilde{O}_n} \Lambda_\varphi^*(x) \geq -\inf_{g\in O_n} \Lambda_\varphi^*(g)\geq - \Lambda_\varphi^*(f),
\]
To finish the proof we show that for any $C>0$ there exists $n$ such that 
\[
	\limsup_{T\rightarrow +\infty} T^{-\alpha} \log \pr{\ddp{X_T}{\varphi} \in O_n \backslash B(f,\epsilon)} \leq  -C. 
\]
Using the upper bound estimate we have only to prove that $\inf_{ f\in  cl\rbr{O_n  \backslash B(f,\epsilon)}}\Lambda_\varphi^*(f)>C$. To this end we choose $n$ such that $w(f,1/n)<\epsilon/10$ ($w$ being defined by \eqref{eq:modulus}). If  $f\in  cl\rbr{O_n  \backslash B(f,\epsilon)}$ then there exist $s,t$ such that $|s-t|\leq1/n$ and $|f(s)-f(t)|>\epsilon/5$. Using the Jensen inequality it is easy to show that $\int_s^t f'(u)^2 \dd{u} \geq \frac{\epsilon^2 n}{10}$.

\subsubsection{Exponential tightness} \label{sec:exp-tighntess}
Let us fix $\varphi \in \SD_+$ and recall that $F_T = T^{(1+\alpha)/2}$.  In this section we will prove exponential tightness of $\cbr{\ddp{X_T}{\varphi}}_T$. To keep notation short we write
\[
	x_T(t) := \ddp{X_T(t)}{\varphi},\: t\in[0,1].
\]
\begin{rem} \label{rem:smaller} 
	If the exponential tightness holds for $x$ defined with some $\varphi$ it is also true for $x_T$ defined with $C\varphi$ for any $C>0$. Therefore we are entitled to decrease $\varphi$ (finitely many times) if necessary.
\end{rem}
By $ST(x_T)$ we denote stopping times relative to the natural filtration of $x_T$. In our context  
\begin{lem}\label{thm:tight}
	Assume that for all $\lambda>0$ we have \\
	\begin{equation}
		\lim_{\eta \rightarrow +\infty} \limsup_{T\rightarrow +\infty} T^{-\alpha} \log \pr{\sup_{t\in[0,1]} |x_T(t)| \geq \eta } = -\infty, \label{eq:exp-tight1} 
	\end{equation}
	\begin{equation}
		\lim_{\delta \rightarrow 0}  \limsup_{T\rightarrow +\infty} \sup_{\tau \in ST\rbr{x_T}} T^{-\alpha} \log \pr{\sup_{t\leq \delta} |x_T((\tau+t)\wedge 1) - x_T(\tau)|\geq \lambda} = -\infty. \label{eq:exp-tight2} 
	\end{equation}
	then sequence $\cbr{x_T}_T$ is exponentially tight.
\end{lem}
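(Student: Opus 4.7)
The plan is to establish exponential tightness via the Arzel\`a--Ascoli characterisation of compact subsets of $\mathcal{C}([0,1],\R)$. Recall that $K\subset \mathcal{C}([0,1],\R)$ is relatively compact iff it is uniformly bounded and equicontinuous, so I would aim to build, for each $M>0$, a compact set of the form
\[
K_M := \cbr{f: \|f\|_\infty \leq \eta_M}\cap \bigcap_{n\geq 1}\cbr{f: w(f,\delta_{M,n})\leq 1/n},
\]
and then show $\limsup_T T^{-\alpha}\log \pr{x_T\notin K_M}\leq -M$. By a union bound, this reduces to proving two separate estimates: a supremum estimate (directly given by \eqref{eq:exp-tight1}) and a modulus-of-continuity estimate of the form
\begin{equation}
\lim_{\delta\to 0}\limsup_{T\to +\infty} T^{-\alpha}\log \pr{w(x_T,\delta)\geq \lambda}=-\infty,\qquad \text{for every }\lambda>0. \label{eq:wT-goal}
\end{equation}

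The first assumption \eqref{eq:exp-tight1} directly furnishes, for every $M>0$, an $\eta_M$ with $\limsup_T T^{-\alpha}\log \pr{\sup_t |x_T(t)|\geq \eta_M}\leq -M$. The core task is therefore to derive \eqref{eq:wT-goal} from the stopping-time assumption \eqref{eq:exp-tight2}. For this I would follow the classical Aldous-type argument in its exponential form. Fix $\lambda>0$ and define the stopping times
\[
\sigma_0 := 0,\qquad \sigma_{k+1} := \inf\cbr{t>\sigma_k : |x_T(t)-x_T(\sigma_k)|\geq \lambda/3}\wedge 1.
\]
A standard geometric observation shows that if $w(x_T,\delta)\geq \lambda$, then either (i) two consecutive stopping times satisfy $\sigma_{k+1}-\sigma_k<\delta$ with increment $\geq \lambda/3$, or (ii) the total number of stopping times before $1$ exceeds $N:=\lceil 3\|x_T\|_\infty/\lambda\rceil+1$. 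Combining this with \eqref{eq:exp-tight1} to control $\|x_T\|_\infty$ (and hence $N$), and applying \eqref{eq:exp-tight2} at each of the at most $N$ stopping times $\sigma_k$ via a union bound, one obtains
\[
\pr{w(x_T,\delta)\geq \lambda}\leq N\sup_{\tau\in ST(x_T)}\pr{\sup_{t\leq \delta}|x_T((\tau+t)\wedge 1)-x_T(\tau)|\geq \lambda/3}+\pr{\sup_t|x_T(t)|\geq \eta}.
\]
Since $N$ grows only polynomially in $\eta$ while both probabilities on the right decay faster than any exponential $e^{-MT^\alpha}$ (first by choosing $\eta$ large, then by choosing $\delta$ small, both invoking the respective hypotheses), \eqref{eq:wT-goal} follows after taking $\limsup_T$ and then $\delta\to 0$, $\eta\to +\infty$.

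With \eqref{eq:wT-goal} in hand, for each $M$ I pick $\eta_M$ as above and then pick $\delta_{M,n}$ inductively so that $\limsup_T T^{-\alpha}\log \pr{w(x_T,\delta_{M,n})> 1/n}\leq -M-n$. The resulting set $K_M$ is compact by Arzel\`a--Ascoli, and by a union bound
\[
\limsup_{T\to +\infty} T^{-\alpha}\log \pr{x_T\notin K_M}\leq -M,
\]
which is the required exponential tightness. The main obstacle is the first step in the Aldous-style reduction: carefully justifying the geometric implication relating $w(x_T,\delta)$ to the successive stopping times $\sigma_k$ and showing that the number $N$ enters only polynomially (so it is absorbed on the exponential scale $T^\alpha$). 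Everything else is a standard union-bound assembly using the two hypotheses.
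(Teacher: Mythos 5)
Your plan is a direct re-proof of what the paper simply quotes (the proof given there is a one-line reference to Liptser's criterion), and its architecture — Arzel\`a--Ascoli compacts, sup-norm control from \eqref{eq:exp-tight1}, modulus-of-continuity control from \eqref{eq:exp-tight2} via successive stopping times — is the right one. However, two steps are genuinely flawed as written. The ``standard geometric observation'' is false: the number of successive $\lambda/3$-oscillation times $\sigma_k$ in $[0,1]$ is \emph{not} bounded by $\lceil 3\Vert x_T\Vert_\infty/\lambda\rceil+1$; a path can zig-zag with amplitude $\lambda/3$ arbitrarily many times while its sup norm stays of order $\lambda$, so the count of stopping times cannot be controlled through \eqref{eq:exp-tight1}. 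The standard repair does not use the sup norm at all: on $\cbr{\sigma_N\le 1}$ the pigeonhole principle forces some gap $\sigma_{k+1}-\sigma_k\le 1/N$, whence $\pr{\sigma_N\le 1}\le N\sup_{\tau\in ST(x_T)}\pr{\sup_{t\le 1/N}|x_T((\tau+t)\wedge 1)-x_T(\tau)|\ge \lambda/3}$, which \eqref{eq:exp-tight2} makes super-exponentially small once $N$ is fixed large; the event $\cbr{\sigma_N>1}\cap\cbr{w(x_T,\delta)\ge\lambda}$ is then handled by a union bound over the at most $N$ stopping times (note also that the correct dichotomy is: if every gap $\sigma_{k+1}-\sigma_k$ with $\sigma_{k+1}\le 1$ exceeds $\delta$, then $w(x_T,\delta)<\lambda$ by the three-term triangle estimate; it is not the pair realizing the modulus that is within $\delta$).

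The second gap is in the final assembly: it is not a mere union bound. You choose $\delta_{M,n}$ with $\limsup_T T^{-\alpha}\log\pr{w(x_T,\delta_{M,n})>1/n}\le -M-n$, but these are only asymptotic statements, each valid for $T$ beyond a threshold that depends on $n$ and may tend to infinity with $n$; hence for any fixed large $T$ infinitely many terms of $\sum_n\pr{w(x_T,\delta_{M,n})>1/n}$ are uncontrolled, and the conclusion $\limsup_T T^{-\alpha}\log\pr{x_T\notin K_M}\le -M$ does not follow from the hypotheses alone at this level of generality. This is exactly the delicate point of the criterion: one must upgrade the modulus bounds to hold for all $T\ge T_0$ with $T_0$ independent of $n$, which needs an extra ingredient — e.g. ordinary (non-exponential) tightness of $\cbr{x_T}$ over compact ranges of the parameter, as in Billingsley's proof of the classical tightness criterion, or passing to sequences $T_j\to\infty$ where the finitely many ``early'' indices are absorbed using the individual tightness of each $x_{T_j}$ as a $\mathcal{C}([0,1],\R)$-valued random variable. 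Without such a uniformisation the countable union bound does not close, so the proposal as it stands does not yet prove the lemma.
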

This follows easily from \cite[Theorem 3.1]{Liptser:2005vf}. Let us denote now total fluctuation of occupation time by $x$ (i.e. we can take $x(t) := x_{t}(1)$ with $F_T=1$ as a definition). We will need the following estimate
\begin{lem} \label{lem:estimate-xyz}
	Let $\epsilon\in (0,1/2)$ then there exist $c>0$ and $T_0>0$ such that for $T>T_0$ and $0 \leq \theta' \leq T^{-\epsilon}$
	\begin{equation}
		\lap{\theta' \sbr{x(t) - x(s)} } \leq \exp \rbr{ c (t-s) (\theta')^2  }, \:\: 0 \leq s<t \leq T \label{eq:laplace-xyz} 
	\end{equation}
\end{lem}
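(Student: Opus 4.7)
The starting point is the Poisson-field identity underlying Proposition \ref{prop:laplace-branching}, applied with $F_T = 1$ and $\Phi(y, v) := \theta' \varphi(y) \mathbf{1}_{[s,t]}(v)$. For $T$ beyond some $T_0$ the bound $\norm{\Phi}{\infty}{} = \theta'\norm{\varphi}{\infty}{} \leq T^{-\epsilon}\norm{\varphi}{\infty}{} < Q_0^B$ holds, so the identity gives
\begin{equation*}
	\lap{\theta'[x(t) - x(s)]} = \exp\rbr{H \inti \intr u^B_\Phi(x, u_0, \infty)\, \dd{x}\, \dd{u_0}},
\end{equation*}
where $u^B_\Phi(x, u_0, \infty) := \lim_{\tau \ti} u^B_\Phi(x, u_0, \tau)$. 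The limit exists by monotonicity in $\tau$ and is supported on $u_0 \leq t$ because $\Phi(\cdot, v) \equiv 0$ for $v > t$. It therefore suffices to bound this double integral by $c(t-s)(\theta')^2/H$.

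Expanding $u^B_\Phi$ via its integral equation \eqref{eq:delta-v} (after the substitution $w := \tau - u$) and passing to $\tau \ti$,
\begin{equation*}
	u^B_\Phi(x, u_0, \infty) = \inti \T{w}^Q\sbr{\Phi(\cdot, u_0+w)\, v^B_\Phi(\cdot, u_0+w, \infty) + Vq\rbr{v^B_\Phi(\cdot, u_0+w, \infty)}^2}(x)\, \dd{w}.
\end{equation*}
Integrating in $x$ and applying (A7) followed by Lemma \ref{lem:norm-infty-v1} (which yields $\norm{v^B_\Phi}{\infty}{} \leq C_2 \theta'\norm{\varphi}{\infty}{}$ once $\theta'$ is small) shows the bracket is of order $\theta'$; after Fubini one obtains
\begin{equation*}
	\inti \intr u^B_\Phi(x, u_0, \infty) \,\dd{x}\,\dd{u_0} \leq C_3\, \theta' \inti \norm{v^B_\Phi(\cdot, u, \infty)}{1}{}\,\dd{u}.
\end{equation*}

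For the remaining $L^1$-integral I would route through the superprocess: Lemma \ref{lem:comparison} supplies a constant $C > 0$ with $v^B_\Phi \leq v^S_{\Phi/C}$ pointwise. Viewing $\Phi/C$ as $\varphi \otimes \chi$ with $\chi := (\theta'/C)\mathbf{1}_{[s,t]}$ (of sup-norm as small as one likes), Lemma \ref{lem:helper} together with monotone convergence as $t_0 \ti$ gives
\begin{equation*}
	\inti \norm{v^B_\Phi(\cdot, u, \infty)}{1}{}\,\dd{u} \leq C_4\, \norm{\varphi}{1}{}\, \theta'\,(t-s),
\end{equation*}
and combining the two estimates produces \eqref{eq:laplace-xyz}.

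The delicate point is this last step: the $L^\infty$ bound from Lemma \ref{lem:norm-infty-v1} alone cannot capture the factor $(t-s)$, and one cannot afford to lose it since $t-s$ may be as large as $T$. The trick is to exchange the BPS for the superprocess via Lemma \ref{lem:comparison} and exploit the series representation \eqref{eq:rep} that powers Lemma \ref{lem:helper}; otherwise one would be forced into an ad hoc Gronwall-type iteration directly on \eqref{eq:v-integral}, which is feasible but considerably heavier in bookkeeping.
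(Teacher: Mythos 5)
Your argument is correct, and it reaches \eqref{eq:laplace-xyz} by a more direct route than the paper. The paper's own proof recycles the moderate-deviation computation: it sets $\alpha:=1-2\epsilon$, $F_T=T^{1-\epsilon}$, $\Theta:=T^{\epsilon}\theta'$, writes the Laplace transform as $\exp\rbr{T^{\alpha}\Lambda_\varphi(T,\nu)}$ with $\nu$ a difference of two Dirac masses, and then proves a bound, uniform in $\Theta\in(0,1)$ and $0<a<a+b<T$, on the \emph{relative} error between $\Lambda_\varphi(T,\nu)$ and the limiting Gaussian quantity $H(T,\Theta,b)=\Theta^2 H\frac{b}{T}\rbr{T_1(\varphi)+VqT_2(\varphi)}$, working through the chain $\Lambda_2,\Lambda_{2a},\Lambda_{2b},\Lambda_{2c}$ (and the analogous one for $\Lambda_1$) with (A7), \eqref{eq:u-estimate} and Lemma \ref{lem:helper}. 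You use the same core ingredients but skip the main-term-plus-error decomposition entirely: since only an upper bound of order $(t-s)(\theta')^2$ is needed, you peel one factor $\theta'$ off the bracket $\Phi v^B_\Phi+Vq(v^B_\Phi)^2$ pointwise via Lemma \ref{lem:norm-infty-v1} and (A7), and extract the factor $\theta'(t-s)$ from Lemma \ref{lem:helper} after switching to the superprocess through Lemma \ref{lem:comparison} (a step you handle more carefully than the paper, which applies Lemma \ref{lem:helper} to the BPS quantities without comment; your direction $v^B_\Phi\leq v^S_{\Phi/C}$ is the right one, and the smallness hypotheses are met once $T_0$ is large). The trade-off is that the paper's uniform relative-error bound keeps the sharp constant $H\rbr{T_1(\varphi)+VqT_2(\varphi)}$ in front of $(t-s)(\theta')^2$, consistent with the MDP rate function, while your argument yields only an unspecified $c$ — which is all the lemma asserts — at the price of far less bookkeeping; your passage to the infinite horizon is also legitimate, since $u^B_\Phi(x,u_0,\tau)$ is nondecreasing in $\tau$ and stabilises once $u_0+\tau$ exceeds $t$, so the identity with horizon $\infty$ is exact rather than merely an upper bound.
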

\begin{proof} The proof is based upon the proof of the lower bound from Section \ref{sec:mdp}. Given $\epsilon>0$ we put ${\alpha} := {1-2\epsilon}$ and appropriate $F_T = T^{1-\epsilon}$. Further we denote $\Theta := T^\epsilon \theta'$. By assumptions we have $\Theta \in [0,1]$. Let us recall \eqref{eq:Lambda}. We write 
	\[
		\lap{\theta \sbr{x(a+b) - x(a)} } = \lap{  T^{1-2\epsilon} \Theta \sbr{x_T((a+b)/T)) - x_T(b/T)}} =\exp\rbr{ T^{\alpha}\Lambda_\varphi(T,\nu) },
	\]
	where $\nu = \delta_{(a+b)/T} - \delta_{a/T}$. Let us denote 
	\[
		H(T,\Theta, b) := \Theta^2 H \frac{b}{T} \rbr{T_1(\varphi) + Vq T_2(\varphi)} = \underbrace{H T^{-\alpha} \theta^2 b T_1(\varphi)}_{H_1(T,\Theta, b)} + \underbrace{HVq T^{-\alpha} \theta^2 b T_2(\varphi)}_{H_2(T,\Theta, b)}.
	\]
 We claim that $\Lambda_{\varphi}(T,\nu) \approx c H(T,\theta, b)$. To be more precise the lemma will be shown once we prove
	\[
		\limsup_{T\rightarrow +\infty}  \sup_{\Theta \in (0,1)} \sup_{0<a<a+b<T} \frac{\Lambda_\varphi(T,\nu) - H(T,\Theta, b) }{H(T,\Theta, b)} = C<+\infty.
	\]
We start with considering $\frac{\Lambda_2(T,\nu) - H_2(T,\Theta, b) }{H(T,\Theta, b)}$. In this direction we going to use the chain of approximations of \eqref{eq:Lambda} from Section \ref{sec:mdp}. That is $\Lambda_{2},\Lambda_{2a},\Lambda_{2b},\Lambda_{2c}$ given by \eqref{eq:Lambda2}-\eqref{eq:Lambda2c}. Obviously, by the fact that $\varphi\geq 0$ and $\chi = \chi_\nu \geq 0$ (recall \eqref{def:measure-chi}), we have $\Lambda_{2}(T)\leq \Lambda_{2a}(T)$, so we have only check the rest of the terms. Let us recall that $\Psi(T,\Theta) = \Theta T^{(\alpha-1)/2} \varphi \chi_T$. Using definitions \eqref{sol:tv}, \eqref{eq:def-u}, assumption (A7), inequalities \eqref{eq:u-estimate}, $\tilde{v}_{\Psi(T,\theta)} \leq v_{\Psi(T,\theta)}$, \eqref{eq:u-estimate} and finally Lemma \ref{lem:helper} we prove
\begin{multline*}
	\frac{\Lambda_{2a}(T,\nu) - \Lambda_{2b}(T,\nu) }{ H(T,\Theta,b)  } \leq 
	\frac{   C_1 T^{-\alpha} \intc{T} \intr \mathcal{U}^Q\sbr{u_{\Psi(T,\theta)}(x,T-s,s) v_{\Psi(T,\theta)}(x,T-s,s)}  \dd{s}  \dd{x}  }{C \Theta^2  T^{-1} b  } \leq\\
	\frac{   C_2 \Theta^2  \intc{T} \intr v_{\Psi(T,\theta)}(x,T-s,s)  \dd{s}  \dd{x}  }{\Theta^2 b  } \leq C_3 T^{(\alpha-1)/2} b^{-1} \intc{T}\chi_T(T-s) \dd{s} = C_3 T^{(\alpha-1)/2}  \rightarrow 0.
\end{multline*}
Further using assumption (A7) again we derive
	\[
		\frac{|\Lambda_{2b}(T,\nu) - \Lambda_{2c}(T,\nu) |}{ H(T,\Theta,t) } \leq \frac{C_1 \Theta^2 \intc{1} \intr \left| {\rbr{\intc{Ts} \T{u}^Q \varphi(x)\chi(1-s) \dd{u}}^2 - \rbr{\intc{Ts} \T{u}^Q \varphi(x)\chi(1-s + u/T) \dd{u}}^2 }\right| \dd{x}\dd{s} }{C \Theta^2  T^{-1} b}
	\]
	\[
		\leq \frac{ C_2 \intc{1} \intr  \rbr{\intc{Ts} \T{u}^Q \varphi(x)(\chi(1-s) + \chi(1-s + u/T)) \dd{u} }  \rbr{\intc{Ts} \T{u}^Q \varphi(x) |\chi(1-s) - \chi(1-s + u/T)| \dd{u} } \dd{x} \dd{s}}{ b T^{-1} }.
	\]
It is easy to check that $\intc{\cdot} \T{u}^Q \varphi(x) |\chi(1-s) - \chi(1-s + u/T)| \dd{u} \leq C_3$. Next we substitute $u\rightarrow Tu$, use definition of $\nu$ and assumption (A7) to obtain
\begin{multline*}
		\frac{|\Lambda_{2b}(T,\nu) - \Lambda_{2c}(T,\nu) |}{ H(T,\Theta,b) } \leq C_4 b^{-1} T^2 \intc{1} \intr  \intc{s} \T{Tu}^Q \varphi(x)(\chi(1-s) + \chi(1-s + u)) \dd{u} \dd{x}  \dd{s} = \\ C_5 b^{-1}T^2 \intc{1} \intr \T{Tu}^Q \varphi(x) \int_{u}^1 (\chi(1-s) + \chi(1-s + u)) \dd{s} \dd{x}  \dd{u}	\leq C_5 T^2 \intc{1} \intr \T{Tu}^Q \varphi(x)\dd{x} \dd{u} \rightarrow 0.
\end{multline*}
In similar way one can upper-bound $\frac{\Lambda_1(T,\nu) - H_1(T,\Theta, b) }{H(T,\Theta, b)}$ which concludes the proof. 
\end{proof}

We need also a method of estimating suprema of processes. Let us denote the set of dyadic rationals
\[
	D_k = \cbr{i/2^k: i\in \cbr{0,1,\ldots,2^k}}.
\]
\begin{lem}\label{lem:suprema}
	Let $x:[0,1]\rightarrow \R$ be a c\'adl\a'g function then
	\[
		\sup_{t\in[0,1]} |x(t)| \leq 2 \sum_{k=1}^\infty L_k(x) + |x(1)|,
	\]
	where 
	\[
		L_k(x) = \max_{\substack{r,s,t \in D_k \\s-r=t-s=2^{-k}} } m_{rst}(x),
	\]
	and $m_{rst}(x) = |x(s) - x(r)| \wedge |x(t) - x(s)|$.
\end{lem}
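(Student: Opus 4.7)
The plan is a two-scale dyadic chaining argument that exploits the ``min'' structure defining $L_k$ twice: once to control the growth of the dyadic maxima $M_k := \max_{d \in D_k} |x(d)|$ in $k$, and once to control the horizontal gap $|x(1) - x(0)|$. The factor $2$ in the bound is the sum of these two independent contributions.

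First I would reduce to a bound on dyadic maxima. By right-continuity of $x$, for every $t \in [0,1]$ the sequence $d_n(t) := \lceil t 2^n\rceil/2^n$ converges to $t$ from above and $x(d_n(t)) \to x(t)$, so $\sup_{t \in [0,1]} |x(t)| = \sup_{d \in D}|x(d)| = \lim_k M_k$, where $D = \bigcup_k D_k$. To bound $M_k$ inductively in $k$, fix any midpoint $d \in D_k \setminus D_{k-1}$; its neighbours $d_\ell = d - 2^{-k}$ and $d_r = d + 2^{-k}$ lie in $D_{k-1}$ and together with $d$ form a consecutive triple of $D_k$. The definition of $L_k$ gives $\min(|x(d) - x(d_\ell)|, |x(d_r) - x(d)|) \leq L_k$, and applying the triangle inequality on whichever side attains this minimum yields $|x(d)| \leq L_k + M_{k-1}$. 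Combined with the trivial $|x(d)| \leq M_{k-1}$ for $d \in D_{k-1}$, this gives $M_k \leq M_{k-1} + L_k$ and, iterating, $M_k \leq M_0 + \sum_{j=1}^{k} L_j$.

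It then remains to bound the base $M_0 = \max(|x(0)|, |x(1)|)$. The reverse triangle inequality gives $M_0 \leq |x(1)| + |x(1) - x(0)|$, so it suffices to control $|x(1) - x(0)|$ by $\sum_{k \geq 1} L_k$. For this I run a parallel chaining: set $B(k) := \max\{|x(b) - x(a)| : a, b \in D_k,\ b - a = 2^{-k}\}$. For adjacent $a < b$ in $D_k$, the midpoint $m = (a + b)/2 \in D_{k+1}$ makes $(a, m, b)$ a consecutive triple at scale $k + 1$, so $\min(|x(m) - x(a)|, |x(b) - x(m)|) \leq L_{k+1}$, and splitting $|x(b) - x(a)| \leq |x(b) - x(m)| + |x(m) - x(a)|$ into the small and large piece yields $B(k) \leq L_{k+1} + B(k+1)$. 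Iterating, $B(0) \leq \sum_{j=1}^{N} L_j + B(N)$; since the sample paths of $\ddp{X_T}{\varphi}$ to which this lemma is applied are continuous in $t$, $B(N) \to 0$ by uniform continuity on $[0,1]$, and one obtains $|x(1) - x(0)| = B(0) \leq \sum_{k \geq 1} L_k$.

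Putting the two chainings together, $\sup_t |x(t)| \leq M_0 + \sum_k L_k \leq |x(1)| + |x(1) - x(0)| + \sum_k L_k \leq |x(1)| + 2\sum_{k \geq 1} L_k$, as claimed. The main subtlety is the second chaining: the first one produces $M_k \leq M_{k-1} + L_k$ almost directly from the definition of $L_k$, but the base $M_0$ still involves an uncontrolled piece $|x(0)|$ that must be traded for $|x(1)|$ via an independent reduction, and this reduction hinges on $B(N)\to 0$, i.e.\ on the sample path being genuinely (locally) uniformly continuous rather than merely right-continuous with left limits; this is automatic for the continuous process $\ddp{X_T}{\varphi}$ used in Section~\ref{sec:exp-tighntess}.
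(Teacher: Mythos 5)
Your first chaining is correct, and it is precisely the standard dyadic argument behind the reference the paper gives for this lemma (Billingsley, Section 10): since $D_{k-1}\subset D_k$ and every point of $D_k\setminus D_{k-1}$ is the middle point of a consecutive triple of $D_k$ whose outer points lie in $D_{k-1}$, you get $M_k\le M_{k-1}+L_k$, and passing from the dyadics to all of $[0,1]$ by right-continuity gives $\sup_{t}|x(t)|\le\max(|x(0)|,|x(1)|)+\sum_{k\ge 1}L_k(x)$ for every c\`adl\`ag $x$. The genuine gap is in your second chaining: the step $B(N)\to 0$ requires (uniform) continuity, so what you actually prove is the inequality for continuous $x$, while the statement is for c\`adl\`ag $x$. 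This cannot be patched within your scheme, because for c\`adl\`ag $x$ the intermediate claim $|x(1)-x(0)|\le\sum_k L_k(x)$ is simply false; in fact the lemma as written fails for general c\`adl\`ag functions: for $x=\mathbf{1}_{[0,1/2)}$ the two increments of any consecutive dyadic triple are taken over disjoint intervals, so at most one of them is nonzero, hence $L_k(x)=0$ for all $k$, while $\sup_t|x(t)|=1>0=|x(1)|$.

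What rescues the statement in its actual use is that it is applied to $x_T(t)=\ddp{X_T(t)}{\varphi}$ (and to its increments over $[s,s+\delta]$), which vanish at the left endpoint and have continuous paths; and once $x(0)=0$ your first chaining alone already finishes the proof, even with constant $1$, since then $\max(|x(0)|,|x(1)|)=|x(1)|$ and $\sup_t|x(t)|\le |x(1)|+\sum_{k\ge1}L_k(x)$. So the delicate second chaining, which is the only place continuity enters, is needed only if one insists on arbitrary $x(0)$, and then only for continuous $x$; for the c\`adl\`ag class of the statement the correct general form has $\max(|x(0)|,|x(1)|)$ in place of $|x(1)|$, and that is exactly what your first step delivers. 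I would keep your first paragraph, state explicitly the hypothesis you use ($x(0)=0$, or continuity), and either drop the second chaining or present it as the extra step needed for continuous $x$ with $x(0)\neq 0$. Note that the paper itself gives no argument here (it only cites Billingsley), so your first chaining is the intended ``standard'' proof; the defect of the proposal is solely the mismatch between what the $B(N)\to 0$ step needs and the class of functions in the statement.
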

The proof is standard; the reader is referred to \cite[Section 10]{Billingsley:1999cl}. Now we proceed to the proof of exponential tightness. We start with \eqref{eq:exp-tight1}. Using Lemma \ref{lem:suprema} we write
\[
	\pr{\sup_{t\in[0,1]} |x_T(t)| \geq \eta } \leq \pr{2\sum_{k=1}^{\infty} L_k(x_T) \geq \eta/2 } + \pr{|x_T(1)|\geq \eta/2}.
\]
Therefore \eqref{eq:exp-tight1} will be shown once we obtain 
\[
	\lim_{\eta \rightarrow +\infty} \limsup_{T\rightarrow +\infty} T^{-\alpha} \log \pr{|x_T(1)|\geq \eta}= -\infty,\quad \lim_{\eta \rightarrow +\infty} \limsup_{T\rightarrow +\infty} T^{-\alpha} \log \pr{ \sum_{k=1}^{\infty} L_k(x_T) \geq \eta}  = -\infty. 
\]
The first one follows from \cite[Theorem 5.1]{Hong:2005aa}. To prove the second one we set 
\begin{equation}
	\epsilon_1 := (1-\alpha)/50, \quad \epsilon_2 := (1-\alpha)/70, \quad \theta := 2^{-\epsilon_1}, \label{eq:def-epsilons}
\end{equation}
and write
\begin{multline*}
	\pr{\sum_{k=1}^{\infty}L_k(x_T) \geq \eta} \leq \sum_{k=1}^{\infty} \pr{L_k(x_T) \geq \theta^k \eta_\theta}\\ \leq \sum_{k=1}^{\infty} 2^k \max_{i\in \cbr{1,2,\ldots,2^k}}\pr{|x_T(i2^{-k}) - x_T((i-1)2^{-k})| \geq \theta^k \eta_\theta},
\end{multline*}
where $\eta_\theta = \eta (1-\theta)/\theta$. We denote also $K_T := \frac{1-\alpha}{8 \log(2\theta)} \log T$ and $k_T := \log T$ and split the sum
\[
	\pr{\sum_{k=1}^{\infty}L_k(x_T) \geq \eta} \leq \sum_{k=1}^{K_T} \ldots + \sum_{k=K_T}^{k_T} \ldots + \sum_{k=k_T}^{\infty} \ldots =: I(T) + II(T) + III(T).
\]
\paragraph*{Estimation of $III(T)$} ~\\
First we will estimate the probability in the sum above. For $k\in \mathbb{N}$ we denote $\delta_k := 2^{-k}, l_k := 2^k$ and take any $u_1, u_2 \in \R_+$ such that $u_2 - u_1 = \delta_k$ and by $x$ total fluctuation (as in Lemma \ref{lem:estimate-xyz}). We have 
\begin{multline*}
	A_k := \lap{l_k(x(u_2) - x(u_1))} = \exp\cbr{\intc{u_2} \intr u^B_{\Psi_k}(x,u_2 - t,t) \dd{x} \dd{t}  } \\ = \exp \cbr{\intc{u_2} \intr \intc{t} \T{t-s}^Q \sbr{v^B_{\Psi_k}(\cdot,u_2 - s,s)^2}(x) \dd{s} \dd{x} \dd{t} },
\end{multline*}
where $\Psi_k(x,t) = l_k \varphi(x) \mathbf{1}_{[u_1, u_2]}(t)$. One must be aware that in the above equation we go slightly beyond the scope of Proposition \ref{prop:laplace-super} and Lemma \ref{lem:differentus}. However let us notice that all functions above are analytic as functions of complex parameter $l_k$.  We understand $u^S_{\Psi_k}$ and $v^S_{\Psi_k}$ as the analytic extension of the definitions in Section \ref{sec:one-particle}. Using assumption (A7) and the Fubini theorem we get
\[
	\log A_k \leq  c \intc{u_2} \intr v^B_{\Psi_k}(x,u_2 - s,s)^2 \dd{x} \dd{s}.
\]
We are going to estimate the right-hand side. Let us notice that by Lemma \ref{lem:comparison} it is sufficient to prove the estimation of $\intc{u_2} \intr v^S_{C^{-1}\Psi_k}(x,u_2 - s,s)^2 \dd{x} \dd{s}$.  We denote $v_k(x,s):= v^B_{C^{-1}\Psi_k}(x,u_2-s,s)$. Equation \eqref{eq:w-integral-super} writes as
\[
	v_k(x,t) = \intc{t} \T{t-s}^Q \sbr{l_k \varphi(\cdot) \mathbf{1}_{[0,\delta_k]}(s) + v_k^2(\cdot,s) }(x) \dd{s}. 
\]
We are going to estimate $\norm{v_k(\cdot,t)}{2}{}$. Using the representation \eqref{eq:rep} (we use analytic extensions again and skip $Vq$ to make calculations trackable) we have 
\begin{equation*}
		v_k(x,t) = \sum_{k=0}^{\infty } F^{*n}_k(x,t),
\end{equation*}
where  $F_k^{*1}(x,t) = \intc{t} \T{t-s}^Q \sbr{l_k \varphi(\cdot) \mathbf{1}_{[0,\delta_k]}(s)}(x) \dd{s}$. Let us recall $Q'$ from assumption (A6); we will prove that
\[
 \norm{F^{*n}_k(\cdot,t)}{2}{} \leq 2^{-n} e^{-(Q' t)/2}.
\]
For $n=1$ we have and $t>\delta_k$ using assumption (A6) and the generalised Minkowski inequality
\begin{multline*}
	\norm{F_k^{*1}(\cdot,t)}{2}{} \leq \norm{ \T{t-\delta_k}^Q \intc{t} \T{t-s}^Q \sbr{l_k \varphi(\cdot) \mathbf{1}_{[0,\delta_k]}(s)}(x) \dd{s}}{2}{} \leq l_k e^{-Q'(t-\delta_k)} \norm{ \intc{\delta_k} \T{s}^Q {\varphi(x) } \dd{s}}{2}{} \leq \\ l_k e^{-Q'(t-\delta_k)}\intc{\delta_k}  \norm{  \T{s}^Q {\varphi(x) } }{2}{} \dd{s} \leq l_k e^{-Q'(t-\delta_k)}\intc{\delta_k}  \norm{ {\varphi } }{2}{} \dd{s} = e^{-Q'(t-\delta_k)} \norm{ {\varphi } }{2}{} \leq  2^{-1}e^{-(Q't)/2}.
\end{multline*}
We decrease $\varphi$ is necessary - see Remark \ref{rem:smaller}. For $t<\delta_k$ it is easy to check that $\norm{F_k(\cdot,t)}{2}{}$ is even smaller. Using Lemma \ref{lem:hong-infty} we have $\norm{F^{*n}_k}{\infty}{} \leq  C^{-n}$ for any constant $C$ (possibly decreasing $\varphi$ once more). For $n>1$ we estimate using the induction argument together with the (generalised) Minkowski inequality
	\begin{multline*}
		\norm{F^{*n}_k(\cdot,t)}{2}{} = \norm{\sum_{j=1}^{n-1} \rbr{F^{*j}_k * F^{*(n-j)}_k} (\cdot,t)}{2}{} 
\\	\leq	2 \sum_{j=1}^{\lceil n/2 \rceil}\norm{\intc{t} \T{t-s}^Q \sbr{  F^{*j}_k(\cdot,s)  F^{*(n-j)}_k(\cdot,s) }(x)\dd{s} }{2}{} \leq 2 \sum_{j=1}^{\lceil n/2 \rceil}\norm{ \intc{t} \T{t-s}^Q {F_k^{*j}(\cdot,s)} \dd{s} }{2}{} \norm{ F^{*(n-j)}_k}{\infty}{} \\
		\leq 2 \sum_{j=1}^{\lceil n/2 \rceil}\norm{ \intc{t} \T{t-s}^Q \sbr{F_k^{*j}(\cdot,s)} \dd{s} }{2}{} C^{-(n-j)} \leq  2 \sum_{j=1}^{\lceil n/2 \rceil} C^{-(n-j)} \intc{t} \norm{  \T{t-s}^Q \sbr{F_k^{*j}(\cdot,s)}  }{2}{} \dd{s}.
	\end{multline*}
	We can now use assumption (A6), the induction hypothesis and choose suitable $C>0$ to get  
	\begin{multline*}
	\norm{F_k(\cdot,t)^{*n}}{2}{} \leq	2 \sum_{j=1}^{\lceil n/2 \rceil} C^{-(n-j)} \intc{t} e^{-Q'(t-s) }2^{-j}e^{-(Q's)/2}  \dd{s} \\ \leq  e^{-(Q't)/2} \sum_{j=1}^{\lceil n/2 \rceil} Q' C^{-(n-j)}  2^{-j}  \leq 2^{-n} e^{-(Q't)/2}.
	\end{multline*}
It is now obvious that $\norm{v(\cdot,t)}{2}{} = \norm{\sum_{n=1}^{\infty}F(\cdot,t)^{*n}}{2}{} \leq  e^{-(Q't)/2}$ and the estimate does not depend on $k$, so neither does $A_k$. The Chebyshev inequality yields
\begin{equation*}
	\pr{x(u_2) - x(u_1)\geq \lambda} \leq \frac{ {A_k} }{\exp (l_k \lambda)} \leq C_2 \exp\rbr{-2^k \lambda}.
\end{equation*}
It is easy to derive an analogous estimate for $\pr{x(u_1) - x(u_2)\geq \lambda}$. Employing these to $III(T)$  we get
\[
	III(T) \leq 2C_2 \sum_{k=k_T}^{\infty} 2^k \exp\rbr{- 2^{k-k_T} T^{(1+\alpha)/2} \theta^k \eta_\theta } =2 C_2 \sum_{k=k_T}^{\infty}  \exp\rbr{k \ln 2 - (2\theta)^{k-k_T} \theta^{k_T} T^{(1+\alpha)/2} \eta_\theta }
\]
The choice of $\theta$ yields $\theta^{k_T} = T^{-\epsilon_1}$. For $T$'s large enough (depending on $\eta_\theta$) and certain $C>0$ we have $k \ln 2 - (2\theta)^{k-k_T} T^{-\epsilon_1+ (1+\alpha)/2} \eta_\theta \leq - k C T^{\alpha} \eta_\theta$, hence
\[
		III(T) \leq \sum_{k=k_T}^{\infty}  \exp\rbr{- k C T^{\alpha} \eta_\theta } \leq   \frac{\exp\rbr{- k_T C T^{\alpha} \eta_\theta }}{1 - \exp\rbr{- k_T C T^{\alpha} \eta_\theta } }.
\]
It is now straightforward to check that $\lim_{\eta \rightarrow +\infty} \limsup_{T\rightarrow +\infty} T^{-\alpha} \log III(T) = -\infty$.

\paragraph*{Estimation of $I(T)$} We will use Lemma \ref{lem:estimate-xyz} with $\epsilon := \frac{1-\alpha}{4}$ and $\theta'=l_T$ given by
\[
	l_T := \frac{\eta_\theta}{2 c} (2\theta)^k T^{(\alpha-1)/2},
\]
where $c$ is the same as in the lemma. It is straightforward to check that for $T$'s large enough (depending on $\eta_\theta, \theta$ and $c$) and for $k<K_T$ we have $l_T \leq T^{(\alpha-1)/4}$. Consequently, by Lemma \ref{lem:estimate-xyz} and the Chebyshev inequality we have 
\[
	\pr{x_T(i2^{-k}) - x_T((i-1)2^{-k}) \geq \theta^k \eta_\theta} \leq \exp \rbr{l_T^2 c T 2^{-k} - l_T T^{(1+\alpha)/2} \theta^k \eta_\theta  } = \exp\rbr{-\frac{\eta_\theta^2}{4c} (2\theta^2)^k T^\alpha}.
\]
An analogous inequality for $\pr{x_T((i-1)2^{-k}) - x_T(i2^{-k}) \geq  \theta^k \eta_\theta}$ also holds. Consequently
\[
	I(T) \leq 2 \sum_{k=1}^{K_T} 2^k \exp\rbr{-\frac{\eta_\theta^2}{4c} (2\theta^2)^k T^\alpha} =  2\sum_{k=1}^{K_T} \exp\rbr{k \ln 2-\frac{\eta_\theta^2}{4c} (2\theta^2)^k  T^\alpha}.
\]
Recalling \eqref{eq:def-epsilons} it is easy to check that $\theta> 1/\sqrt{2}$ hence $(2\theta^2)^k > c_1 k $. Finally we get
\[
	I(T) \leq \sum_{k=1}^{K_T} \exp\rbr{k \ln 2- c_1 k \frac{\eta_\theta^2}{4c} T^\alpha} \leq \frac{\exp\rbr{\ln 2 - c_1 \frac{\eta_\theta^2}{4c} T^\alpha}}{1 -\exp\rbr{\ln 2 - c_1 \frac{\eta_\theta^2}{4c} T^\alpha} }
\]
It is now straightforward to check that $\lim_{\eta \rightarrow +\infty} \limsup_{T\rightarrow +\infty} T^{-\alpha} \log I(T) = -\infty$.\\
\paragraph*{Estimation of $II(T)$} By \eqref{eq:def-epsilons} one checks that  $(7\alpha+1)/8 - \epsilon_2 -\epsilon_1 \geq \alpha$. We will use Lemma \ref{lem:estimate-xyz} with some $\epsilon < (3 - 3\alpha)/8-\epsilon_2$ and $\theta'=l_T$ given by
\[
	l_T =\frac{\eta_\theta}{ 2c } T^{(3\alpha-3)/8-\epsilon_2},
\]
where $c$ is the same as in the lemma. For large $T$ (depending on $\eta_\theta$ and $c$) we have $l_T \leq T^{-\epsilon}$. By Lemma \ref{lem:estimate-xyz}  and the  Chebyshev inequality we get 
\begin{multline*}
	\pr{x_T(i2^{-k}) - x_T((i-1)2^{-k}) \geq \theta^k \eta_\theta} \leq \exp \rbr{l_T^2 c T 2^{-k} - l_T T^{(1+\alpha)/2} \theta^k \eta_\theta  } = \\
	\exp \rbr{ \frac{\eta^2_\theta}{4 c}T^{(3\alpha+1)/4 - 2\epsilon_2 } 2^{-k} - \frac{\eta^2_\theta}{2 c} T^{(7\alpha+1)/8 - \epsilon_2} \theta^k} \leq \exp \rbr{ \frac{\eta^2_\theta}{4 c}T^{(7\alpha+1)/8-2\epsilon_2}  - \frac{\eta^2_\theta}{2 c} T^{(7\alpha+1)/8 - \epsilon_2 -\epsilon_1} },
\end{multline*}
where the last estimate follows by $\theta^{k_T} \geq T^{-\epsilon_1}$ and $2^{-k} \leq 2^{-K_T} \leq T^{(\alpha-1)/8}$. An analogous estimate holds also for $\pr{x_T((i-1)2^{-k}) - X_T(i2^{-k}) \geq \theta^k \eta_\theta}$. Putting these together we write
\[
	II(T) \leq C_1 T \exp \rbr{ \frac{\eta^2_\theta}{4 c}T^{\rbr{7\alpha+1}/{8}-2\epsilon_2}  - \frac{\eta^2_\theta}{2 c} T^{(7\alpha+1)/8 - \epsilon_2 -\epsilon_1} }.
\]
It is now straightforward to check that $\lim_{\eta \rightarrow +\infty} \limsup_{T\rightarrow +\infty} T^{-\alpha} \log I(T) = -\infty$.\\
This end the proof of \eqref{eq:exp-tight1}. Now we turn to \eqref{eq:exp-tight2}. For any $\tau \in ST(x_T)$ we have 
\[
	\sup_{t\leq \delta} |x_T((\tau+t)\wedge 1) - x_T(\tau)| \leq w(x_T, \delta),
\]
where $w$ is the modulus of continuity \eqref{eq:modulus}. Using this fact together with \cite[Theorem 7.4]{Billingsley:1999cl} we get 
\[
	\sup_{\tau \in ST(x_T)}\pr{\sup_{t\in[0,\delta]} |x_T((\tau+t)\wedge 1) - x_T(\tau)|\geq \lambda} \leq \sup_{s\in[0,1-\delta]} \delta^{-1}\pr{\sup_{t\in[0,\delta]} |x_T(s+t) - x_T(s)|\geq \lambda/3}. 
\]
To prove \eqref{eq:exp-tight2} it is enough to prove that for any $\lambda>0$ there is
\begin{equation}
	\lim_{\delta \rightarrow 0}  \limsup_{T\rightarrow +\infty} T^{-\alpha}\sup_{s\in[0,1-\delta]} \log \pr{\sup_{t\in[0,\delta]} |x_T(s+t) - x_T(s)|\geq \lambda}  = -\infty. \label{eq:exp-tight3} 
\end{equation}
The following proof mimics the proof of \eqref{eq:exp-tight1} but is slightly more technically elaborated. By Lemma \ref{lem:suprema} we have
\[
	\pr{\sup_{t\in[0,\delta]} |x_T(t+s) - x_T(s)| \geq \lambda } \leq \pr{2 \sum_{k=1}^{\infty} L^{\delta,s}_k(x_T) \geq \lambda/2 } + \pr{|x_T(s+\delta) - x_T(s)|\geq \lambda/2}
\]
where $L^{\delta,s}_k$ is defined analogously to $L_k$ but on the interval $[s,s+\delta]$. Finally, \eqref{eq:exp-tight3} will be shown once we have proved that 
\[
	\lim_{\delta \rightarrow 0}  \limsup_{T\rightarrow +\infty} T^{-\alpha}\sup_{s\in[0,1-\delta]} \log \pr{|x_T(s+\delta) - x_T(s)|\geq \lambda}  = -\infty,
\]
\[
	\lim_{\delta \rightarrow 0}  \limsup_{T\rightarrow +\infty} T^{-\alpha}\sup_{s\in[0,1-\delta]} \log \pr{\sum_{k=1}^{\infty} L^{\delta,s}_k(x_T) \geq \lambda}  = -\infty.
\]
The first convergence can be  obtained by application of Lemma \ref{lem:estimate-xyz} with $\theta'=\frac{\lambda}{2c\delta}T^{(\alpha-1)/2}$ and the Chebyshev inequality, namely $\pr{|x_T(s+\delta) - x_T(s)|\geq \lambda} \leq \exp\rbr{-\frac{\lambda}{4c\delta}T^\alpha}$. To prove the second we recall \eqref{eq:def-epsilons} 
and write
\begin{multline*}
	\pr{\sum_{k=1}^{\infty}L^{\delta,s}_k(x_T) \geq \eta} \leq \sum_{k=1}^{\infty} \pr{L_k^{\delta,s}(x_T) \geq c_\theta \theta^k \eta}\\
	\leq \sum_{k=1}^{\infty} 2^k \max_{i\in \cbr{1,2,\ldots,2^k}}\pr{|x_T(i2^{-k}) - x_T((i-1)2^{-k})| \geq c_\theta \theta^k \eta}
\end{multline*}
where $\lambda_\theta = \lambda (1-\theta)/\theta$. We denote also $K_T := \frac{1-\alpha}{8 \log(2\theta)} \log T$ and $k_T := \log (\delta T)$ then
\[
	\pr{\sum_{k=1}^{\infty} L^{\delta,s}_k(x_T) \geq \eta} \leq \sum_{k=1}^{K_T} \ldots + \sum_{k=K_T}^{k_T} \ldots + \sum_{k=k_T}^{\infty} \ldots =: I(T) + II(T) + III(T).
\]
\paragraph*{Estimation of $III(T)$}
Following the same lines of reasoning as in the previous section we  arrive at
\begin{multline*}
	III(T) \leq C \sum_{k=k_T}^{\infty} 2^k \exp\rbr{-2^{k-k_T} \delta^{-1} T^{(1+\alpha)/2} \theta^k \lambda_\theta }\\ \leq C \sum_{k=k_T}^{\infty}  \exp\rbr{k \ln 2 - (2\theta)^{k-k_T} \theta^{k_T} \delta^{-1} T^{(1+\alpha)/2} \lambda_\theta }.
\end{multline*}
The choice of $\theta$ implies $\theta^{k_T} = (T\delta)^{-\epsilon_1}$. For $T$'s large enough (depending on $\lambda_\theta$ and $\delta$) and certain $C>0$ we have $k \ln 2 - (2\theta)^{k-k_T} \theta^{k_T} \delta^{-1} T^{(1+\alpha)/2} \lambda_\theta \leq -k C \delta^{-1-\epsilon_1} T^\alpha \lambda_\theta$, hence
\[
		III(T) \leq \sum_{k=k_T}^{\infty}  \exp\rbr{-k C \delta^{-1-\epsilon_1} T^\alpha \lambda_\theta} \leq   \frac{\exp\rbr{-k_T C \delta^{-1-\epsilon_1} T^\alpha \lambda_\theta}}{1 - \exp\rbr{-k_T C \delta^{-1-\epsilon_1} T^\alpha \lambda_\theta} }.
\]
It is now straightforward to check that for any $\lambda>0$ we have $\lim_{\delta \rightarrow 0} \limsup_{T\rightarrow +\infty} T^{-\alpha} \log III(T) = - \infty$.

\paragraph*{Estimation of $I(T)$} We use Lemma \ref{lem:estimate-xyz} with $\epsilon = \frac{1-\alpha}{4}$ and $\theta'=l_T$ ($c$ is given by the lemma)
\[
	l_T = \frac{\lambda_\theta}{2 \delta c} (2\theta)^k T^{(\alpha-1)/2}.
\]
It is straightforward to check that for $T$'s large enough (depending on $\lambda_\theta, \delta, \theta$ and $c$) for any $k<K_T$ we have $l_T \leq T^{(\alpha-1)/4}$. Consequently, by Lemma \ref{lem:estimate-xyz} and the Chebyshev inequality we have 
\begin{multline*}
	\pr{x_T(s+ i\delta 2^{-k}) - x_T(s+ (i-1) \delta 2^{-k}) \geq  \theta^k \lambda_\theta} \\
	\leq \exp \rbr{l_T^2 c \delta T 2^{-k} - l_T T^{(1+\alpha)/2} \theta^k \lambda_\theta  } = \exp\rbr{-\frac{\lambda_\theta^2}{4\delta c} (2\theta^2)^k T^\alpha}.
\end{multline*}
An analogous inequality holds also for $\pr{x_T(s+ (i-1)\delta 2^{-k}) - x_T(s+ i \delta 2^{-k}) \geq  \theta^k \lambda_\theta}$. Consequently
\[
	I(T) \leq 2 \sum_{k=1}^{K_T} 2^k \exp\rbr{-\frac{\lambda_\theta^2}{4\delta c} (2\theta^2)^k T^\alpha} =  \sum_{k=1}^{K_T} \exp\rbr{k \ln 2-\frac{\lambda_\theta^2}{4\delta c} (2\theta^2)^k  T^\alpha}.
\]
We know that $\theta> 1/\sqrt{2}$ hence $(2\theta^2)^k > c_1 k $ for certain $c_1>0$. Finally we get
\[
	I(T) \leq \sum_{k=1}^{K_T} \exp\rbr{k \ln 2- c_1 k \frac{\lambda_\theta^2}{4\delta c} T^\alpha} \leq \frac{\exp\rbr{\ln 2 - c_1 \frac{\lambda_\theta^2}{4\delta c} T^\alpha}}{1 -\exp\rbr{\ln 2 - c_1 \frac{\lambda_\theta^2}{4\delta c} T^\alpha} }.
\]
It is now straightforward to check that for any $\lambda>0$ we have $\lim_{\delta \rightarrow 0} \limsup_{T\rightarrow +\infty} T^{-\alpha} \log I(T) = - \infty$.

\paragraph*{Estimation of $II(T)$}  We apply Lemma \ref{lem:estimate-xyz} with some $\epsilon < (3 - 3\alpha)/8-\epsilon_2$ and $\theta'=l_T$ given by
\[
	l_T =\frac{\lambda_\theta}{ 2\delta c } T^{(3\alpha-3)/8-\epsilon_2},
\]
where $c$ is given by the lemma. For $T$'s large enough (depending on $\lambda_\theta, \theta, \delta$ and $c$)  $l_T \leq T^{-\epsilon}$ hence Lemma \ref{lem:estimate-xyz} and the  Chebyshev inequality yield 
\begin{multline*}
	\pr{x_T(s+ i\delta 2^{-k}) - x_T(s+(i-1)\delta 2^{-k}) \geq \theta^k \lambda_\theta} \leq \exp \rbr{l_T^2 c T \delta 2^{-k} - l_T T^{(1+\alpha)/2} \theta^k \lambda_\theta  } = \\
	\exp \rbr{ \frac{\lambda_\theta^2}{4 \delta  c}T^{(3\alpha+1)/4 - 2\epsilon_2 } 2^{-k} - \frac{\lambda_\theta^2}{2 \delta  c} T^{(7\alpha+1)/8 - \epsilon_2} \theta^k} \\\leq \exp \rbr{ \frac{\lambda_\theta^2}{4 \delta c}T^{\frac{7\alpha+1}{8}-2\epsilon_2}  - \frac{\lambda_\theta^2 }{2\delta^{1+\epsilon_1}  c} T^{(7\alpha+1)/8 - \epsilon_2 -\epsilon_1} },
\end{multline*}
where in the last estimation we used the fact that $\theta^{k_T} \approx (\delta T)^{-\epsilon_1}$ and $2^{-k} \leq 2^{-K_T} \leq T^{(\alpha-1)/8}$. An analogous estimate holds also for $\pr{x_T(s+ (i-1)\delta 2^{-k}) - x_T(s+i\delta 2^{-k}) \geq  \theta^k \lambda_\theta}$. Hence
\[
	II(T) \leq C \delta T \exp \rbr{ \frac{\lambda_\theta^2}{4 \delta c}T^{\rbr{7\alpha+1}/{8}-2\epsilon_2}  - \frac{\lambda_\theta^2 c_\theta}{2 \delta^{1+\epsilon_1}c} T^{(7\alpha+1)/8 - \epsilon_2 -\epsilon_1} }
\]
It is now straightforward to check that for any $\lambda>0$ we have $\lim_{\delta \rightarrow 0} \limsup_{T\rightarrow +\infty} T^{-\alpha} \log II(T) = - \infty$.

\bibliographystyle{plain} 
\bibliography{Resources/Library/branching}

\end{document}